\newcounter{lemmacounter}
\newcounter{thmcounter}
\newcounter{propcounter}
\newtheorem{lemma}[lemmacounter]{Lemma}
\newtheorem{proposition}[propcounter]{Proposition}
\newtheorem{theorem}[thmcounter]{Theorem}
\numberwithin{lemmacounter}{section}
\numberwithin{remcounter}{section}
\numberwithin{propcounter}{section}
\newcommand{\IP}{{\bf P}}
\newcommand{\IC}{{\bf C}}
\newcommand{\IR}{{\bf R}}
\newcommand{\IQ}{{\bf Q}}
\newcommand{\IQbar}{\overline{\bf Q}}
\newcommand{\IZ}{{\bf Z}}
\newcommand{\IN}{{\bf N}}
\newcommand{\IA}{{\bf A}}
\newcommand{\HeightS}{H}
\newcommand{\Height}[1]{\HeightS{({#1})}}
\newcommand{\heightS}{h}
\newcommand{\height}[1]{\heightS{({#1})}}
\newcommand{\IRan}{{\bf R}_{\rm an}}
\newcommand{\alg}[1]{#1^{\rm alg}}
\newcommand{\N}[1]{N({#1})}
\newcommand{\ssm}{\smallsetminus}
\newcommand{\tors}[1]{#1_{\rm tors}}
\newcommand{\remtor}[1]{#1^{\star}}
\newcommand{\red}{{\rm red}}
\newcommand{\ord}{{\rm ord}}
\newcommand{\E}{\mathcal{E}}
\newcommand{\EL}{\mathcal{E}_L}
\newcommand{\XL}{X}
\newcommand{\ta}[1]{#1^{\rm ta}}
\newcommand{\sta}[1]{#1^{\rm sta}}
\renewcommand{\textsl}[1]{\tikz[baseline=(X.base)]
  \node[xslant=0.26] (X) {#1};} 
\newcommand{\qaq}{\quad\text{and}\quad}
\newcommand{\Dset}{W}
\subjclass[2010]{Primary: 14H52; Secondary: 14G40,  11G05, 11U09.}
\begin{document}
\title{ Torsion Points  on   Elliptic Curves in
Weierstrass Form}
\author{P.~Habegger}
\maketitle


\begin{abstract}
  We prove that there are only finitely many complex  numbers $a$ and $b$
with $4a^3+27b^2\not=0$ such that 
the three points $(1,*),(2,*),$ and
$(3,*)$ are simultaneously torsion on the elliptic curve
defined in Weierstrass form by $y^2=x^3+ax+b$.
This gives an affirmative answer to a question raised by Masser and
Zannier.
We thus confirm a special case in two dimensions of the relative  
Manin-Mumford Conjecture  formulated by Pink
and Masser-Zannier. 
\end{abstract}

\section{Main Result}
\label{sec:main}

In pursuit of unlikely intersections, Masser and Zannier 
\cite{MZ:torsionanomalous, MZ:torsionanomalousAJ} proved that there
 are only finitely many complex $\lambda\not=0,1$ such that
\begin{equation}
\label{eq:MZfinite}
    (2,  \sqrt{2(2-\lambda)})\quad\text{and}\quad (3, \sqrt{6(3-\lambda)})
\end{equation}
are  torsion points on the elliptic curve 
given in Legendre form  $y^2 = x(x-1)(x-\lambda)$.


This result provides evidence for  far-reaching conjectures stated by
its authors \cite{MZ:torsionanomalousAJ,MZ:legendre} 
and by Pink \cite{Pink}. 
Both conjectures govern the
distribution of torsion points 
on a subvariety of a  family of abelian varieties
and may be regarded as a relative version of the Manin-Mumford
Conjecture.  They deal with
   unlikely or anomalous intersections
 emphasized in the earlier work of
 Zilber  \cite{Zilber} for constant semiabelian varieties.
In Masser and Zannier's result the subvariety is an algebraic curve inside
the fibered square of the Legendre family of elliptic curves. 

Another natural family of elliptic curves  is the
Weierstrass family. 
Here an elliptic curve is given as $y^2=x^3+ax+b$ where $a$ and $b$ are
complex parameters that satisfy  the 
inequality $4a^3+27b^2\not=0$ to rule out singularities. 
In this context 
Masser and Zannier \cite{MZ:torsionanomalousAJ} asked if a similar
finiteness statement as above holds.  Because there are 
two parameters, the conjectures suggest
 imposing a torsion condition on a
third point to  expect finiteness.

Our main result gives a positive answer to
Masser and Zannier's question and provides the first evidence supporting
a relative Manin-Mumford Conjecture over a base of
 dimension greater than one.

\begin{theorem}
\label{thm:main}
  There are only finitely many complex pairs $(a,b)$ 
with $4a^3+27b^2\not=0$ such that  
\begin{equation*}
  (1,  \sqrt{1+a+b}),\quad
  (2,  \sqrt{8+2a+b}),\qaq
(3,  \sqrt{27+3a+b})
\end{equation*}
are  torsion points on the elliptic curve given in Weierstrass form
 $y^2=x^3+ax+b$.
\end{theorem}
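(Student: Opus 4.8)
\emph{Sketch of the proof.} The plan is to run the Pila--Zannier method over the two-dimensional base $\mathcal{B}=\{(a,b)\in\IC^2:4a^3+27b^2\neq0\}$, as Masser and Zannier did for the Legendre family but now with one extra section. Over a simply connected $\Delta\subseteq\mathcal{B}$ I would fix holomorphic periods $\omega_1,\omega_2$ of $dx/y$ on $E_{a,b}$ and holomorphic elliptic logarithms $z_i$ of the points $P_i=(i,\sqrt{i^{3}+ia+b})$, $i=1,2,3$, and write $z_i=\beta_{i,1}\omega_1+\beta_{i,2}\omega_2$ with real-analytic $\beta_{i,j}\colon\Delta\to\IR$. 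This gives the Betti map $\beta=(\beta_{i,j})\colon\Delta\to\IR^6$ with image a real-analytic surface $Z$, and the crucial equivalence: $P_1,P_2,P_3$ are simultaneously torsion of order dividing $N$ at $(a,b)$ exactly when $\beta(a,b)\in\tfrac1N\IZ^6$, i.e.\ $\beta(a,b)$ is a rational point of $Z$ of height $\le N$. Because none of the sections $s_i\colon(a,b)\mapsto(i,\sqrt{i^{3}+ia+b})$ is torsion on the generic fibre, $Z$ is genuinely four-real-dimensional and transcendental, so such rational points are scarce, and the proof lives off this scarcity.

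The first real step is arithmetic: produce a constant $c$ so that every $(a,b)$ with $P_1,P_2,P_3$ simultaneously torsion has $\height{a}\le c$, $\height{b}\le c$, and lies in a fixed compact $K\subset\mathcal{B}$. The mechanism is that the N\'eron--Tate heights $\hat h_{E_{a,b}}(P_i)$ grow like a positive multiple of $\height{a,b}$ up to lower-order terms (since the $s_i$ are non-torsion sections), so their simultaneous vanishing at a torsion point forces $\height{a,b}=O(1)$. Making this uniform is delicate because the base is two-dimensional: one must control the variation of canonical heights in the Weierstrass family, in particular the contributions of the bad fibres and of the locus $4a^3+27b^2=0$ (a relative Bogomolov-type estimate). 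With this in hand I cover $K$ by finitely many charts $\Delta$ and henceforth work with $\beta$ on a fixed set definable in $\IRan$.

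Now suppose for contradiction that there are infinitely many simultaneously-torsion $(a,b)$. One first checks their orders $N$ tend to infinity: otherwise they lie in finitely many intersections $\mathcal{C}_{1,N_1}\cap\mathcal{C}_{2,N_2}$ of ``Weierstrass division curves'', where $\mathcal{C}_{i,M}$ is the curve of those $(a,b)$ at which the point of $E_{a,b}$ with $x$-coordinate $i$ is $M$-torsion, and such an intersection is finite unless two of these curves share a component — a degenerate case reduced below to the one-dimensional situation. For $(a,b)$ of order $N$ some $N_i\ge N^{1/3}$; say $i=1$. The curve $\mathcal{C}_{1,N_1}$ is closely tied to the modular curve $X_1(N_1)$ (remember only the pointed curve $(E_{a,b},P_1)$, forgetting the Weierstrass normalisation), so its gonality is $\gg N_1^{2}$; by Frey's theorem \emph{either} $[\IQ(a,b):\IQ]\gg N_1^{2}\gg N^{2/3}$ \emph{or} $(a,b)$ is one of finitely many sporadic points of $\mathcal{C}_{1,N_1}$. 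In the first case every $\mathrm{Gal}(\IQbar/\IQ)$-conjugate of $(a,b)$ is again simultaneously torsion of order $N$ (its $x$-coordinates $1,2,3$ being rational), so $Z$ carries $\gg N^{2/3}$ rational points of height $\le N$; since Pila--Wilkie bounds the rational points of height $\le N$ on the transcendental part $Z^{\mathrm{trans}}$ by $\ll_\epsilon N^\epsilon$, for $N$ large these cannot all lie in $Z^{\mathrm{trans}}$, and the block form of Pila--Wilkie together with the infinitude of our torsion points produces a positive-dimensional complex-algebraic $B\subseteq Z$ passing through infinitely many of the points $\beta(a,b)$. The sporadic alternative must be shown to account for only finitely many $(a,b)$ — exploiting their bounded height and all three torsion conditions simultaneously — or else to push the relevant $(a,b)$ onto a curve, again handled in the last step. \emph{This Galois lower bound, together with the disposal of the sporadic points, is the main obstacle}: Merel-type uniform boundedness gives only $[\IQ(a,b):\IQ]\gg\log N$, hopelessly weak against Pila--Wilkie, so the gonality input and a genuine argument against isolated low-degree points are unavoidable.

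Finally, from $B$ one extracts a contradiction. Pulling back gives an irreducible positive-dimensional complex subvariety $Y\subseteq\mathcal{B}$ on which $\beta$ satisfies a non-trivial polynomial relation and which contains infinitely many simultaneously-torsion points. If the Zariski closure of $Y$ is all of $\mathcal{B}$, the relation holds identically on $\Delta$; a suitable Ax--Lindemann statement for the Weierstrass elliptic scheme, which here I would verify directly, then forces either $j(a,b)$ to be constant (false, since $j$ is non-constant on this family) or a relation $n_1s_1+n_2s_2+n_3s_3=O$ in $E(\IQbar(a,b))$ modulo torsion with $(n_1,n_2,n_3)\neq0$; the latter I would exclude by computing the height pairing of $s_1,s_2,s_3$ from intersection numbers on a smooth model, showing they are independent. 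If instead the Zariski closure of $Y$ is an algebraic curve $\mathcal{C}\subset\mathcal{B}$, then $\mathcal{C}$ carries infinitely many simultaneously-torsion points; the one-dimensional case of relative Manin--Mumford (Masser--Zannier), applied to $E\times_{\mathcal{B}}E\times_{\mathcal{B}}E$ over $\mathcal{C}$, forces each $s_i|_{\mathcal{C}}$ to be identically torsion, so $\mathcal{C}$ is a common component of $\mathcal{C}_{1,N_1}$, $\mathcal{C}_{2,N_2}$ and $\mathcal{C}_{3,N_3}$ for suitable orders — which one rules out by a finite check, e.g.\ by analysing the behaviour of these three division curves as $(a,b)$ approaches the discriminant locus $4a^3+27b^2=0$. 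Either way we reach the desired contradiction.
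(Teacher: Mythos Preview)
Your overall architecture is the same as the paper's --- height bound, Galois-orbit lower bound, Pila--Wilkie upper bound, then analysis of the ``algebraic part'' --- but the implementation diverges at two load-bearing points, and at one of them your sketch has a genuine gap.

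First, the Galois/counting step. You try to get $[\IQ(a,b):\IQ]\gg N^{2/3}$ from the gonality of $X_1(N_1)$ via Frey, and then apply Pila--Wilkie directly to the four-real-dimensional image $Z$. The paper does something quite different and, for this problem, cleaner: it uses David's bound $\#\tors{E(K)}\ll h_0\,[K:\IQ]\log[K:\IQ]$ to produce, for each torsion point of order $N$, an integral relation $\chi=(\alpha,\beta,\gamma)$ with $|\chi|^3\ll D\log D$, and then pigeonholes a positive proportion of the $D$ conjugates into a single fibre $Z_y$ of a definable \emph{family} indexed by $(\alpha,\beta,\gamma,\psi,\omega)\in\IZ^5$. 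This gives $\gg N^{1/3}/\log N$ rational points of height $\le cN$ on a single fibre, and the uniform Pila--Wilkie bound then yields a semi-algebraic curve inside that fibre --- where one relation already holds by construction, so Bertrand/Ax (one-dimensional functional transcendence) immediately gives a second independent relation. Your route instead requires an Ax--Lindemann statement over a two-dimensional base and leaves the ``sporadic points'' from Frey unresolved (and they are not obviously finite in total as $N_1$ varies). Neither is fatal in principle, but the paper's David+pigeonhole device is precisely what lets one avoid both issues.

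Second --- and this is the real gap --- your endgame misstates what Masser--Zannier gives over a curve $\mathcal{C}\subset S$. Their result (for a curve inside the fibred cube) does \emph{not} force each $s_i|_{\mathcal{C}}$ to be identically torsion; it forces two independent generic relations, i.e.\ $\rho(P_1,P_2,P_3)\le 1$ on $\mathcal{C}$. So the problem you actually have to solve is: no irreducible curve $C\subset S$ can carry $P_1,P_2,P_3$ with $\rho\le 1$. This is not a ``finite check'' on division curves. The paper handles it by passing to the function field of $C$, showing via the Tate uniformisation at a place of bad reduction that $\rho\le 1$ would force a multiplicative relation between explicit real quadratic units (e.g.\ $7\pm 4\sqrt 3$ and $4\pm\sqrt{15}$, or $-11\pm 2\sqrt{30}$ and $13\pm 2\sqrt{42}$), which is impossible; hence good reduction everywhere, hence constant $j$, which is then excluded by a short Manin--Mumford argument. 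Your ``behaviour near the discriminant locus'' gesture is in the right direction but is not yet an argument, and the conclusion you draw from the one-dimensional case is simply too strong.
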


Although the methods we present are as a whole confined to a specific example,
some intermediate steps hold in greater generality. It is therefore
convenient to work in a more general language.
When not stated otherwise, a variety is defined over $\IC$.
We also identify a variety with the set of its complex points. 
If a variety $X$ is defined
over a field $K$  it is sometimes still useful to write $X(K)$ for the 
$K$-rational points on  $X$.

We proceed by reformulating our main result.
Let $S$
be the affine algebraic surface  
\begin{equation}
\label{def:S}
 \{(a,b)\in \IA^2;\,\,
4a^3+27b^2\not=0\};
\end{equation}
it is defined over $\IQbar$, the algebraic closure of $\IQ$ in $\IC$.
The Weierstrass family of elliptic curves
\begin{equation*}
  \E = \left\{([x:y:z],(a,b))\in\IP^2\times S;\,\,
y^2z = x^3+axz^2+bz^3\right\}
\end{equation*}
 is an abelian scheme over the two-dimensional base $S$.
Let $\E^3$ be the three-fold fibered power of $\E$ over $S$
and $\pi:\E^3\rightarrow S$ the structure morphism. We obtain
 an abelian scheme over  $S$.
A complex  point of an abelian scheme  that is torsion in its respective fiber
will be called a torsion point. 

In this language, our result states that all torsion points on
 a certain, explicitly given,
algebraic surface $X\subset \E^3$
are contained in  finitely many fibers of $\E^3\rightarrow S$.
This surface, we call it the $123$-surface,
 is the Zariski closure 
of the affine subset of $\E^3$ 
where the first coordinate in each copy of $\E$ is fixed to be
$1,2,$ and $3$, respectively.
The restriction of $\E^3\rightarrow S$ to $X$ has finite
fibers, so our main result is equivalent to the statement that $X$ contains
only finitely many torsion points. 

The general conjecture stated by Masser and Zannier
 \cite{MZ:torsionanomalousAJ}
expects the torsion points on our surface to
 lie on  finitely many proper abelian subschemes of $\E^3$.
If true, it could at best imply that 
  torsion points do not lie 
 Zariski dense on $X$.
Our Theorem \ref{thm:main} however, is unconditional. Moreover, our
finiteness statement is stronger than the conjecture's conclusion.
This feature is  due to the specific nature of our surface. 

Let us 
consider for the moment a variation of the $123$-surface.
We claim that
there are infinitely many complex $(a,b)\in S$ such that
\begin{equation}
\label{eq:surf}
  (0,\sqrt{b}), \quad (1,\sqrt{1+a+b}), \qaq (-1,\sqrt{-1-a+b})
\end{equation}
are torsion points on the elliptic curve $y^2=x^3+ax+b$.
Indeed, we find them on $b=0$. The first point is automatically
torsion of order $2$. We observe that $y^2=x^3+ax$ yields an elliptic
curve with complex multiplication and $j$-invariant $1728$. It follows
from basic
facts on elliptic curves that there are infinitely many  $a\in\IC\ssm\{0\}$
such that $(1,\sqrt{1+a})$ is torsion on $y^2=x^3+ax$;
we shall prove a related statement in Lemma \ref{lem:noanocurve}.
We fix such an
$a$. Then $(-1,\sqrt{-1-a})$ is the image of $(1,\sqrt{1+a})$ under
 an automorphism of order $4$ of $y^2=x^3+ax$. So all three points in
 (\ref{eq:surf}) are torsion.
Using a specialization argument
one can show 
 that the algebraic surface in $\E^3$ induced by (\ref{eq:surf}) is
not in a proper abelian subscheme of $\E^3$. Conjecturally, it does
not contain a Zariski dense set of torsion points. 

Let us briefly recap the proof of Theorem \ref{thm:main}. It
splits up into two parts. In the first half,
laid out in Section \ref{sec:legendre}, we
work in the Legendre family of elliptic curves 
\begin{equation*}
  \EL = \left\{([x:y:z],\lambda)\in \IP^2 \times Y(2);\,\,
y^2z = x(x-z)(x-\lambda z) \right\} 
\end{equation*}
where $Y(2)=\IP^1\ssm\{0,1,\infty\}$.
 The three-fold fibered power of
$\EL\rightarrow Y(2)$ is denoted by $\EL^3$.
Working in the Legendre family has the  advantage that the
base is one dimensional.

Any elliptic curve over $\IC$ is isomorphic to an elliptic
curve in Legendre form.
Using a base change argument we can
construct a new algebraic surface in $\EL^3$  using the $123$-surface.
The study  of torsion points on the $123$-surface will be carried
out by studying torsion points on this new surface. 

The first part of the proof makes no use of the
special form of the $123$-surface.
So all partial results will be   formulated for an arbitrary
irreducible algebraic surface $X_L$ in $\EL^3$.

On any elliptic curve, or more generally, on any abelian scheme
 we use $[N]$ to denote the multiplication by
$N\in\IZ$ morphism.
In Proposition \ref{prop:finiteness} we prove  that $X_L$ contains
only finitely many torsion points outside the so-called torsion anomalous locus
of $X_L$.
Informally, this is the union of
 all positive dimension subvarieties of $X_L$ on which an excessive
 number of independent integral relations 
 \begin{equation}
\label{eq:linrel}
   [\alpha](P_1) + [\beta](P_2) + [\gamma](P_3) = 0
\quad\text{where}\quad (P_1,P_2,P_3,\lambda)\in X_L\quad\text{with fixed}
\quad \alpha,\beta,\gamma\in\IZ
 \end{equation}
hold identically.  A precise definition is provided in Section \ref{sec:legendre}.

To prove Proposition \ref{prop:finiteness} we follow the basic strategy,  proposed
originally by Zannier, and  estimate from above and below the number of
 rational points on certain sufficiently tame sets.
This strategy already appeared in the proof of Masser and Zannier's
result mentioned further up and in a new proof of the Manin-Mumford
Conjecture
by Pila and Zannier \cite{PilaZannier}. 

An elliptic logarithm of a torsion point on an elliptic curve
has rational coefficients with respect to a chosen  period
lattice basis.
The conjugate of any torsion point again leads to a rational point.
This observation together with lower bounds for the Galois orbit of a
torsion point yields the required lower bounds for rational points. 
Masser and Zannier required an upper bound, proved by Pila, for the number of rational points
with fixed denominator  on compact subanalytic surfaces. 

Additional difficulties arise in our situation since $X_L$
is an algebraic surface as opposed to the algebraic curve used to
treat (\ref{eq:MZfinite}).
For example, a crucial height inequality  used by Masser and Zannier
which depends on work of Silverman has only recently been extended  to
higher dimension \cite{Hab:Special} by the author.

Algebraic independence statements for certain
transcendental functions related to elliptic logarithms 
 played an important role in 
Masser and Zannier's result regarding (\ref{eq:MZfinite}) and even
more so in their generalization  to  curves \cite{MZ:legendre}.
Their statements, but also the more general result of Bertrand
\cite{Bertrand:90}, cannot be applied directly to the higher
dimensional case.
We  overcome this difficulty using two tools. First, we use
a bound of David
 \cite{DavidPetiteHauteur} on the number of
torsion points defined over
a number field on an elliptic curve.
The quality of his bound is indispensable in our method. It enables
us
to choose a ``wandering curve'' in $X_L$ containing sufficiently many
conjugates of a given torsion point and hence 
makes the use of results from the one dimensional setting feasible.
Second, we replace Pila's counting result by the  powerful
theorem of Pila and Wilkie \cite{PilaWilkie}
 formulated in the versatile language of o-minimal structures.  
This additional generality is required to treat the real $4$ dimensional
sets which arise naturally  in our problem. An equally important
aspect of the Pila-Wilkie Theorem is that it is uniform over
a  definable family. This enables us to manage the  wandering curve
constructed above.

A brief recollection of the theory of o-minimal structures is
presented in Subsection \ref{sec:ominimal}.
 Using David's result we will find an abundance of 
rational points coming from elliptic logarithms
on one fiber of a definable family. Enough actually,
to successfully compete with the upper bound coming from the Pila-Wilkie Theorem.

In the second half of the proof, detailed in Section \ref{sec:notanom},
 we return to the Weierstrass
family, the  natural setting of our main result. 
The obstruction to obtaining finiteness in the first half
was the torsion anomalous locus of $X_L$.
There is also an analogous locus for algebraic surfaces in $\E^3$. 
The goal of  the second half 
is to get  hold of this locus for the $123$-surface. 
In fact, Proposition \ref{prop:noano}  tells us that it is empty.
We briefly indicate the general idea of the argument.

Typically, an anomalous subvariety is an irreducible algebraic curve $C\subset X$ on which
two independent relations as in (\ref{eq:linrel}) hold. 
We can specialize to any point in the image of $C$ under
$\E^3\rightarrow S$.
This yields three points on an elliptic
curve over $\IC$ which are connected by two independent relations. 
In this situation it seems difficult to directly extract information from the fact that the first affine
coordinates of these three points are $1,2,$ and $3$.
Roughly speaking, we will  specialize to a point on the boundary of a
compactification of $S$. In practice we will work 
with $C\rightarrow\pi(C)$ coming from
the restriction to $C$ of $\pi:\E^3\rightarrow S$.
Passing to the generic fiber yields a point on the cube of an elliptic
curve defined
over the function field of $\pi(C)$. 
 Assume for now that this elliptic curve has  a place of bad multiplicative
reduction. 
We can  use the Tate uniformization which relates the group structure  of an
 elliptic curve and the multiplicative group of a field. 
In many instances, this will allow us to
translate the
 excessive number of integral
relations into a multiplicative relation  involving
algebraic numbers derived from 
 $1,2,$ and $3$.
It is then a simple matter to show that this
  multiplicative relation is untenable. 
From this we will deduce that
 the generic fiber of $C\rightarrow \pi(C)$ must have good reduction
 everywhere.
Therefore, all fibers  share a common
$j$-invariant. From this severe
restriction it will not be difficult to derive a contradiction
using  the particular nature of the $123$-surface.

So  we make heavy use of the  special nature of our surface in
the second half.
What happens if one replaces $1,2,3$ by another triple of algebraic numbers?
We have seen that finiteness need not hold even if 
the triple consists of 
 pairwise distinct integers.
In an unpublished manuscript 
the author described a necessary condition on the triple to ensure
a finiteness statement as in Theorem \ref{thm:main}.
For example, the first three primes $2,3,5$ also yield a finiteness
result
as in our main result.


The author is very grateful to David Masser and Umberto Zannier for
the numerous and productive conversations we had in Pisa, July 2010.   
He also thanks the latter for the invitation to Pisa and 
the Scuola Normale Superiore for its hospitality
and financial support. The author was also supported by  SNSF project
number 124737.
 
\section{Torsion Points Outside the Torsion Anomalous Locus}
\label{sec:legendre}

We will work with an irreducible closed algebraic surface $\XL$ in $\EL^3$.
The $123$-surface will not appear in the current
section. So no ambiguity can occur if we avoid the more cumbersome notation $X_L$ from  the
introduction and use $\pi$ to denote the projection $\EL\rightarrow Y(2)$.
We do keep the subscript in $\EL$ to emphasize that 
we are in the Legendre family.

For $\lambda \in Y(2)$ the fiber
  $(\EL)_\lambda = \pi^{-1}(\lambda)$ 
is taken as an elliptic curve given in Legendre form. 
We identify the three-fold fibered power $\EL^3$  
of $\EL\rightarrow Y(2)$ with
\begin{equation*}
  \EL^3 = \{(P_1,P_2,P_3,\lambda) \in (\IP^2)^3\times Y(2);\,\,
P_1,P_2,P_3\in (\EL)_\lambda\}.
\end{equation*}
By abuse of notation we also use $\pi$ for the projection
$\EL^3\rightarrow Y(2)$ and write $(\EL^3)_\lambda =
\pi^{-1}(\lambda)\subset \EL^3$.
Recall that $(P_1,P_2,P_3,\lambda)\in\EL^3$ is called
torsion if  $P_1,P_2,$ and $P_3$ are torsion points of
 $(\EL)_\lambda$.

Any $\chi=(\alpha, \beta,\gamma)\in \IZ^3$ determines a  Zariski closed set
$G_\chi\subset \EL^3$ through the integral relation
\begin{equation*}
[\alpha] (P_1) + [\beta](P_2) + [\gamma]( P_3) = 0.
\end{equation*}

 An irreducible closed subvariety $A$ of $\XL$ is called
a torsion anomalous subvariety of  $\XL$
\begin{enumerate}
\item [(i)] if $\dim A = 1$ and two independent integral relations hold
  on $A$,  
\item[(ii)] or if 
 $\dim A= 2$ and one non-trivial integral relation holds
  on $A$, 
\item[(iii)]
or  if $\dim A \ge 1$ and $A$ is an irreducible component of an algebraic
subgroup of $(\EL^3)_\lambda$ 
for some $\lambda\in Y(2)$ such that $(\EL)_\lambda$ has complex multiplication.
\end{enumerate}
The torsion anomalous locus of $\XL$
is $ \bigcup_A A$, here $A$ runs
over all torsion anomalous subvarieties of $\XL$.
We write $\ta{\XL}$ for the \emph{complement} of the torsion anomalous locus in $\XL$.

An irreducible closed subvariety $A\subset \EL^3$ 
which dominates $Y(2)$
is called a 
 component of flat subgroup scheme of $\EL$ 
 \begin{enumerate}
 \item [(i)] if $\dim A = 1$ and three independent integral relations hold
  on $A$,
\item[(ii)] or if $\dim A = 2$ and two independent integral relation hold
  on $A$,
\item[(iii)] or if
$\dim A = 3$ and one independent integral relation holds
  on $A$,
\item[(iv)] or if $A=\EL^3$.
 \end{enumerate}
We write $\remtor{\XL}$ for $\XL\ssm \bigcup_A A$, here $A$ runs
over all components of flat subgroup schemes of $\EL$ contained
completely in $\XL$. We have $\ta{\XL}\subset \remtor{\XL}$.

The definition of $\remtor{\XL}$ coincides with the complex points of the corresponding
definition given in \cite{Hab:Special}.
Indeed, see Lemma 2.5 in this reference.

The purpose of this section is to prove 
that there are only finitely many points outside the torsion anomalous
locus of $X$.

\begin{proposition}
\label{prop:finiteness}
Let $\XL\subset \EL^3$ be an irreducible closed algebraic surface
defined over $\IQbar$ which dominates $Y(2)$. 
\begin{enumerate}
\item [(i)]
There are at most finitely many torsion points
in $\ta{\XL}$.
\item[(ii)] The set 
  \begin{equation*}
    \left\{\pi(P);\,\, P\in \remtor{\XL}\text{ is torsion and }(\EL)_{\pi(P)}
\text{ has complex multiplication}\right\} 
  \end{equation*}
is finite.
\end{enumerate}
\end{proposition}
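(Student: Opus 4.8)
The plan is to run the Pila--Zannier strategy in the o-minimal category, balancing a lower bound for the number of rational points coming from Galois conjugates of torsion points against the upper bound of the Pila--Wilkie theorem. Suppose for contradiction that $\ta{\XL}$ contains infinitely many torsion points, say $P^{(n)}=(P^{(n)}_1,P^{(n)}_2,P^{(n)}_3,\lambda_n)$, and let $N_n$ be the least common multiple of the orders of $P^{(n)}_1,P^{(n)}_2,P^{(n)}_3$. First I would dispose of the case of bounded $N_n$: for a fixed $N_0$ the set $[N_0]^{-1}(0)\cap\XL$ has dimension at most one, and each of its one-dimensional components dominates $Y(2)$ and carries three independent integral relations, hence is a component of a flat subgroup scheme contained in $\XL$ and so disjoint from $\remtor{\XL}\supset\ta{\XL}$; thus only the finitely many isolated points of $[N_0]^{-1}(0)\cap\XL$ are available as torsion points of $\ta{\XL}$ of order dividing $N_0$, and after discarding finitely many $P^{(n)}$ we may assume $N_n\to\infty$. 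Next, the height inequality of \cite{Hab:Special}, applied to $\XL$ and to the points $P^{(n)}_i$, which have vanishing fibral N\'eron--Tate height, bounds $\height{\lambda_n}$ independently of $n$; consequently the Faltings heights of the elliptic curves $(\EL)_{\lambda_n}$ and the naive Weil heights of the $P^{(n)}_i$ remain bounded.

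Since $Y(2)$ is one-dimensional while $\XL$ is a surface, a direct count on the real four-dimensional preimage of $\XL$ in the uniformization of $\EL^3$ would have to contend with the algebraic part of a four-dimensional definable set. To sidestep this I would, as indicated in the introduction, reduce to the one-dimensional situation. David's theorem \cite{DavidPetiteHauteur} bounds from below the degree of a torsion point of order $N$ on an elliptic curve, and, because the Faltings heights of the $(\EL)_{\lambda_n}$ are bounded, this bound is uniform in $n$ and essentially of the shape $N^{1-o(1)}$; it is precisely its strength that is indispensable here. It allows one to produce, for every large $n$, an irreducible algebraic curve $C_n\subset\XL$ of bounded degree --- the ``wandering curve'' --- passing through $P^{(n)}$ and through at least $N_n^{\delta}$ of its Galois conjugates, for some fixed $\delta>0$, all of these being torsion points of $\XL$ of order dividing $N_n$. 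Uniformizing $\EL^3\to Y(2)$ over suitable fundamental domains --- legitimate in an o-minimal structure because the Weierstrass $\wp$-function and the modular function are definable there --- one realizes the preimages of the curves $C_n$ as the fibres of a single definable family of real surfaces, and each such fibre then contains at least $N_n^{\delta}$ rational points of height at most $N_n$.

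Now the uniform form of the Pila--Wilkie theorem \cite{PilaWilkie}, applied to this family, bounds by $O_{\epsilon}(T^{\epsilon})$, uniformly in $n$, the number of rational points of height at most $T$ lying off the algebraic part of each fibre. Taking $\epsilon<\delta$ and $n$ large forces a positive-dimensional connected semialgebraic subset $W$ of the preimage of $C_n$ to carry many conjugates of $P^{(n)}$; since the uniformization is a local diffeomorphism, the image of $W$ in $\EL^3$ is positive-dimensional and Zariski dense in $C_n$. Invoking the appropriate Ax--Lindemann statement --- the classical one for a curve lying in a single fibre, and the algebraic-independence results for the Legendre family over a one-dimensional base, recalled in the introduction, when $C_n$ dominates $Y(2)$ --- one finds that its Zariski closure, and hence $C_n$ itself, is a torsion coset of a subgroup scheme, so that $C_n$ satisfies two independent integral relations, or, when $(\EL)_{\lambda_n}$ has complex multiplication, is a component of an algebraic subgroup of the fibre. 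In either case $C_n$ lies in the torsion anomalous locus of $\XL$ or in a flat subgroup scheme contained in $\XL$, while passing through $P^{(n)}$; this contradicts $P^{(n)}\in\ta{\XL}\subset\remtor{\XL}$ and proves~(i). For~(ii) one runs the same argument restricted to the torsion $P\in\remtor{\XL}$ whose fibre has complex multiplication, where clause~(iii) of the definition of torsion anomalous subvariety is the operative one and where one closes the argument using in addition that a complex-multiplication point of bounded height carrying a torsion point of bounded order ranges over a finite set.

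The principal obstacle is the construction in the second paragraph: extracting from David's bound, \emph{uniformly} over a family of elliptic curves, enough Galois conjugates lying on a single curve of bounded degree to defeat the $\epsilon$-power of Pila--Wilkie, while simultaneously organizing the varying curves $C_n$ and their uniformizations into one genuine definable family so that the uniform Pila--Wilkie estimate applies. Everything downstream --- passing from the algebraic part to integral relations via Ax--Lindemann and recognizing the resulting subvariety as torsion anomalous --- is comparatively routine, although the o-minimal bookkeeping of fundamental domains still requires care.
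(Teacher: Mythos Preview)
Your overall strategy coincides with the paper's --- height bound from \cite{Hab:Special}, David's torsion estimate, uniform Pila--Wilkie over a definable family, then Bertrand/Masser--Zannier or Ax for the algebraic part --- but the step you yourself flag as the principal obstacle is not actually resolved, and your proposed resolution (an algebraic curve $C_n\subset\XL$ of \emph{bounded degree} carrying many conjugates, whose preimages you then assemble into a definable family) does not work. David's bound does not by itself produce such a curve; what it gives, via Siegel's Lemma applied to the group generated by $P_1,P_2,P_3$ inside $\tors{(\EL)_\lambda(K)}$, is an integral relation $\chi=(\alpha,\beta,\gamma)\in\IZ^3\ssm\{0\}$ with $|\chi|^3\le c\,D\log(3D)$ (where $D=[F(P):F]$) and $P\in G_\chi$. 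The component of $\XL\cap G_\chi$ through $P$ has degree growing with $|\chi|$, hence with $N$, so no Chow-variety parametrisation packages these into one fixed definable family.

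The paper's definable family is built not from algebraic curves in $\XL$ at all, but from \emph{linear slices} in the period domain. Over a compact region one fixes period-lattice coordinates $\theta:U\to\IR^6$, sets $W=\theta(U)$, and defines
\[
Z=\{(\xi,\alpha,\beta,\gamma,\psi,\omega)\in W\times\IR^5:\ \alpha\xi_1+\beta\xi_3+\gamma\xi_5=\psi,\ \alpha\xi_2+\beta\xi_4+\gamma\xi_6=\omega\}.
\]
Every conjugate $P^\sigma$ satisfies the \emph{same} relation $\chi$, so the pair $(\alpha\xi^\sigma_1+\beta\xi^\sigma_3+\gamma\xi^\sigma_5,\ \alpha\xi^\sigma_2+\beta\xi^\sigma_4+\gamma\xi^\sigma_6)$ lies in $\IZ^2$; since $|\xi^\sigma|$ is bounded on $U$ and $|\chi|\lesssim D^{1/3}$, this integer pair takes $O(D^{2/3})$ values. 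A pigeonhole among the $\gtrsim D$ conjugates landing in $U$ therefore places $\gtrsim N^{1/3}/\log N$ of them in a \emph{single fibre} $Z_y$. Pila--Wilkie, applied uniformly in $y$, then forces a connected semi-algebraic arc inside $Z_y\subset\theta(U\cap G_\chi)$, and only \emph{at this point} does one extract an irreducible component $C\subset\XL\cap G_\chi$ containing a conjugate of $P$ and invoke the transcendence lemma. The ``wandering curve'' thus appears after Pila--Wilkie, not before; its unbounded degree is harmless because the family $Z$ was fixed in advance and is universal for all $\chi$.

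Part~(ii) is also much simpler than you suggest: once $\height{\pi(P)}\le B$ is known for torsion $P\in\remtor{\XL}$, the $j$-invariant of $(\EL)_{\pi(P)}$ has bounded height, and by \cite{Poonen:MRL01} there are only finitely many CM $j$-invariants of bounded height, hence finitely many values of $\pi(P)$. No second pass through the counting machinery is needed.
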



It is  conceivable that the union in the definition of 
$X\ssm \ta{\XL}$ or $X\ssm \remtor{\XL}$ is over  infinitely many $A$. 
So we have no reason to
expect that $\ta{\XL}$ or $\remtor{\XL}$ is Zariski open. 
However,  $\remtor{\XL}$ is known to be Zariski open by Theorem 1.3(i)
\cite{Hab:Special}. 
In a later section we will address the problem of describing
$\ta{\XL}$ for an algebraic surface derived from the $123$-surface.
In our situation, $\ta{\XL}$ will be Zariski open. 

We do expect $\ta{\XL}$ to be Zariski open in general. More precisely,
we expect $\XL$ to contain only finitely many torsion anomalous
subvarieties that are not strictly contained in another torsion
anomalous subvariety of $\XL$. 

Let us assume for the moment that this finiteness statement holds for
$\XL$,
 that no non-trivial integral relation holds identically on $\XL$, and that $\XL$
dominates $Y(2)$.
In this case we sketch how  Proposition
\ref{prop:finiteness} implies a uniform
Manin-Mumford-type statement 
in a family of abelian varieties. Indeed, we may regard $\XL$ as a
 family of curves $\{\XL_\lambda = \XL\cap \pi^{-1}(\lambda)\}$ parametrized by $\lambda\in Y(2)$. 
Up-to finitely many exceptions, controlled by the proposition, any
torsion point on a member of this family lies on one of finitely many anomalous
subvariety as in cases (i)
and (iii)  of the definition.
So any torsion point on $X$ satisfies two independent
relations coming from a fixed finite set.
If we are in case (i) then these relations are integral; in case (iii)
they have coefficients in the endomorphism ring of an elliptic curve
with complex multiplication. 
It is not difficult to deduce that
$X_\lambda$ contains a positive dimensional
 irreducible component of an algebraic subgroup
for at most finitely many $\lambda$.
For all other $\lambda$  two independent relations as above
intersect $X_\lambda$ in a finite set whose cardinality  can be bounded
from above independently of $\lambda$ using B\'ezout's Theorem. 
We conclude that after omitting finitely many $\lambda$ there
 is a uniform upper bound for the number of torsion points on $X_\lambda$.

In the remainder of this section we will assume that $\XL$ is 
as in the proposition. So it dominates $Y(2)$ and we may
 fix a number field $F\subset \IQbar$ over which it is defined.

We will work with
real parameters $B\ge 1$ and $\delta \in (0,1]$.
Here $\delta$ may depend on $B$ and $B$ may
depend on the surface $X$ and on $F$.
If not stated otherwise, 
the symbols $c_1,c_2,\dots$ will denote positive
constants which may depend  $\XL$,  $F$,
$\delta$, and $B$. During the proof $B$ and $\delta$ will be chosen properly.

\subsection{o-minimal Structures}
\label{sec:ominimal}

We provide the definition of an o-minimal structure. For an in-depth
treatment
of this subject we refer to van den Dries's book \cite{D:oMin}.

Let $\IN=\{1,2,3,\dots\}$. 
An o-minimal structure is a
sequence $\mathfrak S=(S_1,S_2,\dots)$ 
such that if $n,m\in\IN$ then 
   $S_n$ is a collection of subsets of
$\IR^n$  with the following properties.
\begin{enumerate}
\item [(i)] The
 intersection  of two sets in $S_n$ is in  $S_n$
and the complement of a set in $S_n$ is in $S_n$.
\item[(ii)] Any real semi-algebraic subset of $\IR^n$ is in
  $S_n$.
\item[(iii)]  The Cartesian product of a set in
  $S_n$ with a set in $S_m$ is in $S_{n+m}$.
\item[(iv)] 
The image of a set in $S_{n+m}$
under the projection
 $\IR^n\times\IR^m\rightarrow
  \IR^n$  onto the first $n$ coordinates  is in $S_n$.
\item[(v)] A set in $S_1$ is a  finite union of points and open,
  possibly unbounded,  intervals.
\end{enumerate}

The first four properties assert that an o-minimal structure contains
enough interesting sets to work with.  The fifth property restricts the possible sets in all
$S_n$ because these project to $\IR$ by (iv). 

We call a subset of $\IR^n$ definable in $\mathfrak{S}$ if it lies in $S_n$.
If $X\subset \IR^n$ then we call a function $f:X\rightarrow \IR^m$ 
definable in $\mathfrak{S}$ if its graph, a subset of $\IR^n\times\IR^m$, lies in $S_{n+m}$.
Domain and image of a  function that is definable in $\mathfrak{S}$
are definable in $\mathfrak{S}$.

A subset $Z$ of $\IR^n\times\IR^m$ that is definable in $\mathfrak{S}$
is sometimes called a  family definable in $\mathfrak{S}$.
We do this to  emphasizes  that
$Z$ can be seen  as a collection of subsets of $\IR^n$ parametrized by $\IR^m$.
Concretely, for $y\in\IR^m$ we let $Z_y$ denote the projection
of $Z\cap (\IR^n\times\{y\})$ to $\IR^n$. Then $Z_y$ is definable in $\mathfrak{S}$.

To formulate the result of Pila and Wilkie mentioned in the introduction,
we shall define the exponential Weil 
  height on the rational numbers
 by setting $\HeightS(p/q)=\max\{|p|,q\}$
for coprime integers $p$ and $q$ with $q\ge 1$. 
In higher dimension we set $\Height{\xi_1,\dots,\xi_n} = 
\max\{\Height{\xi_1},\dots,\Height{\xi_n}\}$ for
$(\xi_1,\dots,\xi_n)\in\IQ^n$.
Let $X\subset \IR^n$ be any subset for the moment.
 The counting function
associated to $X$ is
\begin{equation*}
  \N{X,T} = 
\#\{\xi\in X\cap\IQ^n;\,\, \Height{\xi}\le T\}
\quad\text{for}\quad T\ge 1;
\end{equation*}
there are only finitely many points in $\IQ^n$ of bounded height,
so the cardinality is finite.

We define $\alg{X}\subset X$ to be 
 the union of all connected, positive dimensional real
semi-algebraic sets contained in $X$.

\begin{theorem}[Pila-Wilkie \cite{PilaWilkie}]
\label{thm:PW}
  Let $Z\subset\IR^n\times\IR^m$ be a family definable in an o-minimal
  structure and let $\epsilon >0$. There is a constant $c>0$ depending
  on $Z$ and $\epsilon$ such that if $y\in\IR^m$, then
  \begin{equation*}
    \N{Y\ssm \alg{Y},T}\le c T^\epsilon\quad\text{for all}\quad T\ge 1
  \end{equation*}
where $Y=Z_y$.
\end{theorem}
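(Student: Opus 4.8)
The plan is to follow the Bombieri--Pila/Pila--Wilkie strategy, so the sketch below is in essence the proof in \cite{PilaWilkie}. \textbf{Step 1 (reduction and $C^r$-parametrization).} First I would reduce to a bounded situation: after a definable partition and rescaling it suffices to bound $\N{Y\ssm\alg{Y},T}$ when $Y\subset[0,1]^n$, and by property (iv) of an o-minimal structure one may argue fiberwise while tracking that all constants depend only on the family $Z$. The core input is a \emph{$C^r$-reparametrization theorem}: for each integer $r\ge 1$ there is a finite collection of $C^r$ maps $\phi\colon(0,1)^d\to Y$, with $d=\dim Y$, whose images cover $Y$ up to a definable subset of strictly smaller dimension, such that every partial derivative of $\phi$ of order $\le r$ has sup-norm $\le 1$; crucially the number of such maps is bounded in terms of $Z$ and $r$ only, uniformly over the fibers $Y=Z_y$. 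This is the o-minimal analogue of the Yomdin--Gromov algebraic lemma and is proved by induction on $n$ using cell decomposition together with monotonicity and differentiability properties of definable functions.

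\textbf{Step 2 (the determinant method).} Fix $\epsilon>0$, choose a degree $D=D(n,d,\epsilon)$ and then $r=r(D)$ large, and let $L=\binom{n+D}{n}$ be the number of monomials of degree $\le D$ in $n$ variables. Given one reparametrizing map $\phi$ and $L$ rational points $\xi_1,\dots,\xi_L\in\phi((0,1)^d)$ of height $\le T$ lying in a common box of side $T^{-\mu}$, I form the $L\times L$ matrix whose $(i,j)$ entry is the $j$-th monomial evaluated at $\xi_i$. If its determinant is nonzero it is a rational number whose denominator divides a bounded power of $T$, hence bounded below by a negative power of $T$; on the other hand, Taylor-expanding each $\xi_i=\phi(u_i)$ to order $<r$ and using the derivative bounds on $\phi$ forces strong cancellation, giving an upper bound $c\,T^{-\mu\vartheta}$ with $\vartheta=\vartheta(D,d)\to\infty$ as $D\to\infty$. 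Choosing the parameters so that the upper bound defeats the lower bound forces the determinant to vanish, i.e.\ these $L$ points lie on a hypersurface of degree $\le D$. Covering $(0,1)^d$ by $\asymp T^{d\mu}$ such boxes then shows all rational points of height $\le T$ on $\phi((0,1)^d)$ lie on at most $c\,T^{d\mu}$ hypersurfaces of degree $\le D$, where $d\mu\le\epsilon$ for a suitable choice of $D,r,\mu$.

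\textbf{Step 3 (induction on dimension).} For each of these $\le c\,T^\epsilon$ hypersurfaces $H$ I intersect with $Y$. If $H$ contains an irreducible component of $Y$ through the relevant point, that point lies in $\alg{Y}$ and is discarded; otherwise $Y\cap H$ is definable of dimension $<d$, and since $D$ is fixed the family of such intersections is again definable, so the inductive hypothesis gives $\N{(Y\cap H)\ssm\alg{Y\cap H},T}\le c\,T^\epsilon$, while one checks $\alg{Y\cap H}\subset\alg{Y}$. Summing $\le c\,T^\epsilon$ hypersurfaces against $\le c\,T^\epsilon$ points each yields $\N{Y\ssm\alg{Y},T}\le c\,T^{2\epsilon}$, and since $\epsilon$ is arbitrary this proves the theorem; the base case $d=0$ is trivial. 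Uniformity across the family $Z$ is preserved throughout because every partition, parametrization and slicing performed above is itself definable in $Z$ with data bounded in terms of $Z$ and $\epsilon$ alone.

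\textbf{Main obstacle.} The hard part is Step 1: proving the $C^r$-reparametrization theorem for a general o-minimal structure (the original Yomdin--Gromov argument is written for semialgebraic and subanalytic sets) and, more delicately, obtaining it \emph{uniformly} over a definable family so that the constant $c$ in the conclusion depends only on $Z$ and $\epsilon$. This uniformity is exactly what makes the theorem applicable to the ``wandering curve'' constructed later in the paper; once it is in hand, the determinant estimate and the dimension induction are a careful but essentially routine optimization of exponents.
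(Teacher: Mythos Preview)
The paper does not prove this theorem: it is quoted from \cite{PilaWilkie} and used as a black box, so there is no ``paper's own proof'' to compare against. What you have written is a faithful high-level outline of the original Pila--Wilkie argument (o-minimal Yomdin--Gromov reparametrization, Bombieri--Pila determinant method, induction on dimension, with uniformity in the family coming from the definability of all constructions), and as such it is appropriate here.

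One point in Step~3 is stated too loosely. You write that if the hypersurface $H$ ``contains an irreducible component of $Y$ through the relevant point, that point lies in $\alg{Y}$''. A definable set $Y$ has no meaningful notion of irreducible component in general, and containment of a definable piece of $Y$ in an algebraic $H$ does not by itself place points in $\alg{Y}$ (which requires a connected positive-dimensional \emph{semi-algebraic} set inside $Y$). The actual argument iterates: if the intersection with $H$ does not drop dimension one keeps slicing by further hypersurfaces until either the dimension drops (and induction applies) or a full-dimensional definable open piece of the cell is trapped in an algebraic set of the same dimension, at which point one argues that a semi-algebraic neighbourhood of the point inside that algebraic set is actually contained in $Y$. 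Your inclusion $\alg{Y\cap H}\subset\alg{Y}$ is correct and is what makes the induction go through, but the dichotomy preceding it needs to be phrased with this extra care.
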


By the Tarski-Seidenberg Theorem,  the collection of
all real semi-algebraic sets satisfies (iv) in the definition of an
o-minimal structure. 
From this it is not difficult to show that the real semi-algebraic
sets  define an o-minimal structure.
But this structure is not large enough for our needs. 
Luckily, a variety of larger o-minimal structures are known. 
For example, van den Dries \cite{Dries:TS} reinterpreted a result of Gabrielov as
stating that the so-called
finitely subanalytic sets form an o-minimal structure $\IRan$. 
We will give not a  definition of such sets. It
suffices to remark 
that the restriction to $[-1,1]^n$ of a real valued analytic function 
on a neighborhood of $[-1,1]^n$ is definable in $\IRan$.
This will be enough functions for our application. 

For the remainder of this section we will call sets, functions, and
families definable if they are
definable in $\IRan$.

We could not find a reference for the following, possibly well-known,
statement. Therefore, we provide its short proof which is valid in any
o-minimal structure.

\begin{lemma}
\label{lem:injective}
  Let $X\subset \IR^n$ be a definable set  and let $f:X\rightarrow
  \IR^m$  be a definable
  function. 
There are definable sets $X_0,\dots,X_M\subset\IR^n$ 
with 
 $X=X_0\cup X_1\cup\cdots \cup
  X_M$
 such that 
$f|_{X_1},\dots, f|_{X_M}$ are injective and
such that the fibers of $f|_{X_0}$ contain
no isolated points. 
Here $X_0=\emptyset$ and $M=0$ are possible.
\end{lemma}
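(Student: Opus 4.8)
The statement is about decomposing the domain of a definable function into finitely many definable pieces, all but one of which is mapped injectively, the exceptional piece having fibers without isolated points. The natural tool is \emph{definable choice} (definable Skolem functions), which is available in any o-minimal structure expanding an ordered field, or — more robustly for an arbitrary o-minimal structure — cell decomposition together with the o-minimal fact that a definable family of finite sets has uniformly bounded cardinality. The plan is to isolate, fiber by fiber, the isolated points of $f$, show they form a definable set on which $f$ is finite-to-one with uniformly bounded fiber size, and then peel off injective pieces one uniform cardinality-level at a time.

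**Step 1: Split off the non-isolated part.** Let $X_0 \subset X$ be the set of $x \in X$ such that $x$ is \emph{not} an isolated point of the fiber $f^{-1}(f(x))$. Being an isolated point is a definable condition — "there exists a rational $r>0$ such that the open ball of radius $r$ about $x$ meets $f^{-1}(f(x))$ only in $x$" — using the order/field structure and property (iv) of the definition to eliminate the quantifiers, so $X_0$ is definable and hence so is $X' := X \setminus X_0$. By construction the fibers of $f|_{X_0}$ contain no isolated points, and every fiber of $f|_{X'}$ consists entirely of isolated points of the original fiber; since a definable subset of $\IR^n$ has only finitely many isolated points (this follows from property (v) after slicing, or directly from cell decomposition), each fiber of $f|_{X'}$ is finite.

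**Step 2: Uniform bound on fiber size.** Consider the definable family $\{f|_{X'}^{-1}(y)\}_{y \in f(X')}$ of finite subsets of $\IR^n$, viewed as $Z = \{(x,y) : x \in X', f(x) = y\} \subset \IR^n \times \IR^m$. A standard o-minimal fact (provable by cell decomposition applied to $Z$, projecting to the parameter space) is that there is an integer $d$ with $\#(f|_{X'}^{-1}(y)) \le d$ for all $y$. This $d$ will be the $M$ in the statement.

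**Step 3: Peel off injective pieces.** For $k = 1, \dots, d$ let $U_k \subset f(X')$ be the definable set of parameters $y$ whose fiber has exactly $k$ points, and over each $U_k$ enumerate the fiber coordinatewise in the lexicographic order on $\IR^n$: the map $y \mapsto (\text{the } j\text{-th point of } f|_{X'}^{-1}(y))$ for $j = 1,\dots,k$ is definable, being cut out by an order condition, so its graph $X_{(k,j)} := \{\,j\text{-th point over some } y \in U_k\,\}$ is a definable subset of $X'$ on which $f$ is injective. The sets $X_{(k,j)}$ for $1 \le k \le d$, $1 \le j \le k$ partition $X'$; reindex them as $X_1, \dots, X_M$ with $M = 1 + 2 + \cdots + d = d(d+1)/2$ (or discard empty ones). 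Together with $X_0$ this gives the required decomposition, and $M = 0$, $X_0 = \emptyset$ occur in the degenerate cases.

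**Main obstacle.** The only genuinely nontrivial input is Step 2, the uniform boundedness of fiber cardinality in a definable family of finite sets; everything else is quantifier-elimination bookkeeping and the use of the order to make lexicographic-first-point selections definable. One should be slightly careful that the "$j$-th point in lexicographic order" selection is definable in a general o-minimal structure — it is, because the order is part of the language and $\IR^n$ is ordered lexicographically by a formula — but if one prefers to avoid even that, one can instead invoke definable choice directly: pick one point from each fiber, remove it, and iterate $d$ times, which terminates by the bound from Step 2 and yields the same decomposition.
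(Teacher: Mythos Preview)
Your proof is correct and follows essentially the same approach as the paper: define $X_0$ as the set of points that are non-isolated in their fiber (equivalently, where the local fiber dimension is positive), observe that the complement has finite fibers of uniformly bounded size, and then peel off injective pieces. The only cosmetic difference is that the paper extracts the injective pieces via abstract Definable Choice and induction on the maximal fiber cardinality (yielding $M=c$ pieces), whereas you stratify by exact fiber size and use the lexicographic order to enumerate (yielding $M=d(d+1)/2$ pieces); you yourself note these are interchangeable.
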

\begin{proof}
We first prove the lemma  when $f$ has finite fibers. Then the
fibers have cardinality bounded from above uniformly by
Corollary 3.6, page 60
\cite{D:oMin}. 
Say $c$ is the maximal cardinality attained.
We may suppose $c\ge 2$. 
By Definable Choice, Proposition 1.2, page 93 {\it ibid.}, 
there is a definable function $g:f(X)\rightarrow \IR^n$
with $f(g(y)) = y$ for all $y\in f(X)$. 
The sets $g(f(X))$ and  $X \ssm g(f(X))$ are definable.
Now the definable function $f|_{g(f(X))}$ is injective and the
 fibers of the definable function $f|_{X\ssm g(f(X))}$  have cardinality at most $c-1$.
The current case of the lemma follows by induction on $c$. 

In the general case we 
observe that
\begin{equation*}
  X_0=\{x\in X;\,\, \dim_x f^{-1}(f(x)) \ge 1\}
\end{equation*}
is a definable set by applying the Cell Decomposition Theorem,
cf. page 62 {\it ibid}. We note that $X_0$ contains no isolated points.
The function $f$ restricted to its complement in $X$ has discrete
fibers.
Again by Corollary 3.6, page 60 {\it ibid.} these fibers are finite.
 This enables us to reduce to the situation above. 
\end{proof}

\subsection{A Definable Family}
\label{sec:ominimal}
In the current subsection, any  reference to  a topology on $\XL$ or
$\EL^3$ will refer to the Euclidean topology if not stated otherwise. 


In a neighborhood of $1/2\in Y(2) =\IC\ssm\{0,1\}$ 
we may describe a period lattice basis  of
the fiber of $\EL$ using  Gauss's hypergeometric
function, cf. Chapter 9 \cite{Husemoeller}. This period lattice basis can be continued analytically along any
path in $Y(2)$.
 We fix a path from any point 
 in $\EL^3$ to the zero element
of $(\EL^3)_{1/2}$.
We  continue the periods along the path induced in $Y(2)$.

Any $P\in \EL^3$ has
 a  neighborhood $V_P$  in $\EL^3$ 
on which we may choose holomorphic elliptic logarithms 
\begin{equation*}
  z_{P1},z_{P2},z_{P3} : V_P\rightarrow \IC.
\end{equation*}
We may also fix holomorphic functions $f_P,g_P:V_P\rightarrow \IC$
whose values determine a basis of the period lattice
of the corresponding fiber. 

The values of $f_P$ and $g_P$
 are $\IR$-linearly
independent. We can express $z_{Pk}$ in terms of $f_P$ and $g_P$
using
real analytic  functions
$\xi_{P1},\dots,\xi_{P6}:V_P\rightarrow \IR$, i.e.
\begin{equation*}
  z_{P1} = \xi_{P1} 
f_P + \xi_{P2} g_P,
 \quad
  z_{P2} = \xi_{P3} 
f_P + \xi_{P4} g_P,
 \quad\text{and}\quad  
z_{P3} = \xi_{P5} 
f_P + \xi_{P6} g_P.
 \quad
\end{equation*}
We write $\theta_P: V_P\rightarrow \IR^6$ for the
real analytic function 
\begin{equation*}
 Q\mapsto (\xi_{P1}(Q),\dots,\xi_{P6}(Q)).
\end{equation*}
It provides coordinates of 
 an elliptic logarithm of $Q$ in terms of
the period lattice basis given by $f_P(Q)$ and $g_P(Q)$.

After shrinking $V_P$ we may suppose that it is
contained in an affine subset of $\EL^3$. 
This has the effect that
if $X'\subset \EL^3$ is Zariski closed then
 $X' \cap V_P$ can be described as the set of common
zeros of finitely many polynomials restricted to  $V_P$.

We note that $\EL^3$ is an $8$-dimension
 real analytic manifold.
After shrinking $V_P$ there is a real bianalytic map
$\vartheta_P:(-2,2)^8{\rightarrow} V_P$ 
taking $0$ to $P$.
We define 
\begin{equation*}
  U_P = X\cap \vartheta_P([-1,1]^8) \subset V_P.
\end{equation*}
Then $U_P$
 is compact since $\vartheta_P([-1,1]^8)$ is compact.
It is also a neighborhood of $P$ in $X$. 

The compact set
\begin{equation*}
  \Lambda_\delta = \{z\in \IC;\,\, \delta\le |z|\le \delta^{-1}\quad
  \text{and}\quad
 |1-z|\ge\delta \}
\end{equation*}
is contained in $Y(2)$.
The pre-image $\pi|_X^{-1}(\Lambda_\delta) = X \cap \left((\IP^2)^3\times
\Lambda_\delta\right)$ is also compact. 
This set is covered by all neighborhoods $U_P$ with
$P \in \pi|_{X}^{-1}(\Lambda_\delta)$.
So there is a positive integer $c_1$ and
 $P_1,\dots, P_{c_1}\in \pi|_X^{-1}(\Lambda_\delta)$ with
$U_{P_1}\cup\cdots\cup U_{P_{c_1}} \supset \pi|_X^{-1}(\Lambda_\delta)$.

In the following, we drop the $P$ and write $U_i,V_i,\theta_i,\vartheta_i$ 
for $U_{P_i},V_{P_i},\theta_{P_i},\vartheta_{P_i}$, respectively.

Let $|\cdot|$ denote the maximum norm on $\IR^n$. 

\begin{lemma}
\label{lem:basic}
  Let $1\le i\le c_1$. There are sets $U_{i0},\dots ,
  U_{i M_i}$ with $U_i = U_{i0}\cup\cdots \cup U_{i M_i}$ such that 
the following properties hold. 
  \begin{enumerate}
\item[(i)]
The functions $\theta_i|_{U_{i1}},\dots,\theta_i|_{U_{iM_i}}$ are injective
and the fibers of $\theta_i|_{U_{i0}}$ contain no isolated points. 
  \item [(ii)] 
If $X'\subset \EL^3$ is Zariski closed, then 
$\theta_i(X'\cap U_{ij}) \subset\IR^6$
is definable for all $0\le j\le M_i$.
\item[(iii)] 
There is $c_2$ with $|\xi| \le c_2$ if $\xi \in \theta_i(U_i)$.
  \end{enumerate}
\end{lemma}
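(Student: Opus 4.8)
The plan is to establish the three properties essentially independently, drawing each from a result already available in the excerpt. For part (i), I would simply apply Lemma \ref{lem:injective} to the definable set $U_i$ and the definable function $\theta_i|_{U_i}$. The only thing to check is that $U_i$ is definable and that $\theta_i$ restricted to it is a definable function: $U_i = X\cap\vartheta_i([-1,1]^8)$, and since $\vartheta_i$ is a real bianalytic map from $(-2,2)^8$ onto $V_i$, the set $\vartheta_i([-1,1]^8)$ is the image of a compact semi-algebraic set under a map whose restriction to a neighbourhood of $[-1,1]^8$ is real analytic, hence definable in $\IRan$; intersecting with $X\cap V_i$ (which is cut out in the affine chart by finitely many polynomials, restricted to $V_i$) keeps us definable. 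The functions $\xi_{i1},\dots,\xi_{i6}$ are real analytic on $V_P$, so after composing with $\vartheta_i$ they are analytic on a neighbourhood of $[-1,1]^8$ and thus definable; hence $\theta_i|_{U_i}$ is a definable function. Lemma \ref{lem:injective} then produces the decomposition $U_i = U_{i0}\cup\cdots\cup U_{iM_i}$ with the stated injectivity and the ``no isolated points in the fibers of $\theta_i|_{U_{i0}}$'' property.

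For part (ii), I would argue that the decomposition from part (i) can be taken compatibly with intersecting by Zariski closed sets. The cleanest route: given $X'\subset\EL^3$ Zariski closed, $X'\cap V_i$ is the common zero set of finitely many polynomials restricted to $V_i$ (this is exactly why $V_P$ was shrunk to lie in an affine chart), so $X'\cap U_{ij}$ is a definable subset of the definable set $U_{ij}$, and $\theta_i$ is a definable function; therefore $\theta_i(X'\cap U_{ij})$ is the image of a definable set under a definable function, hence definable in $\IRan$ by property (iv) of an o-minimal structure. Here it is worth noting that no uniformity over $X'$ is claimed — only that each individual image is definable — so a single application of the projection axiom suffices and there is nothing subtle. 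This works for all $0\le j\le M_i$ uniformly in $j$ since there are only finitely many pieces.

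For part (iii), I would use compactness: $U_i\subset\vartheta_i([-1,1]^8)$ is a compact subset of $V_i$, and $\theta_i:V_i\to\IR^6$ is continuous (indeed real analytic), so $\theta_i(U_i)$ is a compact subset of $\IR^6$ and therefore bounded; let $c_2$ be any bound for the maximum norm on this compact set. One should be slightly careful that $c_2$ is allowed to depend on $\delta$ and $B$ (and on $X$, $F$): the covering $U_{P_1},\dots,U_{P_{c_1}}$ was chosen after fixing $\delta$, so each $U_i$ and each $\theta_i$ depends on these parameters, which is consistent with the conventions stated just before Subsection \ref{sec:ominimal}.

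I do not expect a genuine obstacle here; this lemma is a packaging statement assembling Lemma \ref{lem:injective}, the o-minimality axioms, and elementary compactness. The one place requiring mild care is the definability bookkeeping in (i) and (ii): one must confirm that the real-analytic local data ($\vartheta_i$, the elliptic logarithms $z_{ik}$, the period functions $f_P,g_P$, and hence the coordinate functions $\xi_{ik}$) all become definable in $\IRan$ once restricted to the compact box $[-1,1]^8$ — this is precisely the remark in the excerpt that the restriction to $[-1,1]^n$ of a function analytic on a neighbourhood is definable in $\IRan$ — and that Zariski closed subsets of $\EL^3$ restrict to definable subsets of each $V_i$ because $V_i$ was arranged to sit inside an affine chart. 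Once that is in hand, parts (i)--(iii) follow formally.
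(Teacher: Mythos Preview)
Your proposal is correct and matches the paper's approach: apply Lemma \ref{lem:injective} after checking definability via the chart $\vartheta_i$, handle (ii) by the same definability bookkeeping, and deduce (iii) from compactness. The only place the paper is more explicit is that it applies Lemma \ref{lem:injective} to $\theta_i\circ\vartheta_i|_{\vartheta_i^{-1}(U_i)}$ (a definable function on a definable subset of $\IR^8$) and then pushes forward by the injective continuous $\vartheta_i$, rather than speaking of $U_i\subset\EL^3$ itself as ``definable''; you gesture at this with ``after composing with $\vartheta_i$'', but strictly the sets $U_{ij}$ are obtained as $\vartheta_i$-images of definable pieces of $[-1,1]^8$.
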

\begin{proof}
By construction, $X\cap V_i$ is the zero set in $V_i$  of
functions that are polynomial on $V_i$.
So each pre-image $\vartheta_i^{-1}(U_i) = 
\vartheta_i^{-1}(X\cap V_i)\cap [-1,1]^8$
is the set of common zeros of finitely many real analytic functions on
$(-2,2)^8$ restricted to  $[-1,1]^8$.
Therefore, it is definable in our o-minimal structure $\IRan$.

Observe that $\theta_i\circ\vartheta_i$ is real analytic on $(-2,2)^8$. 
Its restriction to $\vartheta_i^{-1}(U_i)$ is thus definable. 
We apply  Lemma \ref{lem:injective} to 
$\theta_i\circ\vartheta_i|_{\vartheta_i^{-1}(U_i)}$ and obtain $M+1$
definable subsets of $\vartheta_i^{-1}(U_i)$.
Taking their images under  $\vartheta_i$ gives 
 $U_{i0},U_{i1},\dots,U_{i M_i}$ with $U_i = U_{i0}\cup\cdots\cup U_{i
  M_i}$.
The statement of Lemma \ref{lem:injective} 
and the fact that $\vartheta_i$ is injective and continuous 
is what is needed for (i). 

Let $X'$ be as in part (ii).
As before, $\vartheta_i^{-1}(X'\cap V_i) \cap [-1,1]^8$ is a definable
set
and therefore so is 
$\vartheta_i^{-1}(U_{ij}) \cap \vartheta_i^{-1}(X'\cap V_i)\cap [-1,1]^8 =
\vartheta_i^{-1}(X'\cap U_{ij})$.
Its image $\theta_i(X'\cap U_{ij})$ under the definable function
$\theta_i\circ\vartheta_i|_{[-1,1]^8}$ is definable.
 This shows (ii). 

Part (iii) follows since  $U_i$ is compact and $\theta_i$ is continuous. 
\end{proof}

In order to avoid double indices we rename $U_{ij}$
as $U_i$ by increasing, if necessary, the constant $c_1$.
Of course, we also adjust the $\theta_i$ accordingly. For example, in
this new notation claim (i) of the preceding lemma states 
 that
$\theta_i|_{U_i}$ is either injective or has fibers without isolated
points. 

We define 
\begin{equation*}
  \Dset_i = \theta_i(U_i)\subset \IR^6.
\end{equation*}
This is a definable set by part (ii) of the lemma above
applied to $X\supset U_i$.

The image of a torsion point of order $N$ in $U_i$ lies
in $\frac 1N \IZ^6 \cap \Dset_i$.
For this reason we are interested in the distribution of  rational points on
$\Dset_i$.
Below, we will find many such rational points on a fiber of 
\begin{alignat}1
\label{eq:defZi}
  Z_i = \{ &(\xi_1,\dots,\xi_6, \alpha,\beta,\gamma,\psi,\omega)\in \Dset_i\times
  \IR^5;\\
\nonumber
&\alpha \xi_1 + \beta \xi_3 + \gamma \xi_5 = \psi\quad\text{and}\quad
\alpha \xi_2 + \beta \xi_4 + \gamma \xi_6 = \omega
\} \subset \IR^6\times\IR^5
\end{alignat}
considered as a family parametrized by $\IR^5$.
We note that the $Z_i$ are definable because their definition 
involve only  definable sets and
the  basic algebraic operations.



The next lemma is the theorem of  Pila and Wilkie
adapted to our situation.

\begin{lemma}
\label{lem:pilawilkie}
There exists a positive constant $c_3$, depending on the usual data,
 such that if $1\le i\le c_1$ and
$y\in \IR^5$, then 
\begin{equation*}
  \N{Y\ssm\alg{Y},T}\le c_3 T^{1/12}
\quad\text{for all}\quad T\ge 1
\end{equation*}
where $Y=(Z_i)_y$.
\end{lemma}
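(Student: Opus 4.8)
The plan is to apply the Pila--Wilkie Theorem (Theorem \ref{thm:PW}) directly to the definable family $Z_i\subset\IR^6\times\IR^5$ with the choice $\epsilon = 1/12$. This immediately produces, for each $1\le i\le c_1$, a constant $c(i)>0$ depending on $Z_i$ and on $\epsilon$ such that $\N{Y\ssm\alg Y,T}\le c(i)T^{1/12}$ for all $T\ge 1$ and all $y\in\IR^5$, where $Y=(Z_i)_y$. The only work is to replace the finitely many constants $c(1),\dots,c(c_1)$ by a single constant; we set $c_3 = \max\{c(1),\dots,c(c_1)\}$, which depends only on the finitely many families $Z_1,\dots,Z_{c_1}$ and hence on ``the usual data'' (the surface $\XL$, the field $F$, and the parameters $\delta$ and $B$ that went into the construction of the $U_i$ and $\Dset_i$). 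With this choice the asserted bound holds uniformly in $i$ and in $y$.

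The one point that needs to be checked is that $Z_i$ really is a family definable in $\IRan$, so that Theorem \ref{thm:PW} applies. This was already noted when $Z_i$ was introduced: $\Dset_i = \theta_i(U_i)$ is definable by Lemma \ref{lem:basic}(ii) applied to $X'=X\supset U_i$, and the two defining equations of $Z_i$, namely $\alpha\xi_1+\beta\xi_3+\gamma\xi_5=\psi$ and $\alpha\xi_2+\beta\xi_4+\gamma\xi_6=\omega$, are polynomial conditions, hence semi-algebraic; since $\IRan$ contains all real semi-algebraic sets and is closed under finite intersection and Cartesian product, $Z_i$ is definable. Thus $Z_i$ is a legitimate definable family parametrized by $\IR^5$, and its fibers $(Z_i)_y$ are definable subsets of $\IR^6$.

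There is essentially no obstacle here: the lemma is a bookkeeping restatement of Pila--Wilkie, and the exponent $1/12$ is simply the value of $\epsilon$ one commits to at this stage (chosen small enough that later, when this upper bound is played off against the lower bound on rational points coming from David's bound and the Galois orbit estimates, the inequality goes the right way). If one wanted to be slightly more careful about the dependence of the constant, one could observe that all $c_1+1$ objects $X$ and $U_1,\dots,U_{c_1}$, and hence all the $Z_i$, are themselves cut out by data depending only on $\XL$, $F$, $\delta$, $B$; therefore $c_3$ depends only on that data, exactly as claimed. No genuinely new idea is required beyond invoking Theorem \ref{thm:PW}.
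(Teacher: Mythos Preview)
Your proof is correct and follows the same approach as the paper, which simply records that the lemma follows from Theorem~\ref{thm:PW} applied to each $Z_i$ with $\epsilon=1/12$. You have merely made explicit the (trivial) step of taking $c_3=\max_i c(i)$ and the already-noted definability of $Z_i$; no additional idea is needed or missing.
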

\begin{proof}
  This follows from  Theorem \ref{thm:PW} adapted to our situation.
\end{proof}

As we will see below, it is critical that
 this estimate is uniform in the parameter $y$.
We work with the exponent $1/12$  for expository reasons;  the Theorem
of Pila-Wilkie provides any positive $\epsilon$ at the cost
of increasing $c_3$.

\subsection{The Galois Orbit  of a Torsion Point}

Let $E$ be an elliptic curve defined over a number field $K$. It is
well-known that the group of torsion points  $\tors{E(K)}$ of $E(K)$ is finite.
By a deep result of Merel its cardinality $\#\tors{E(K)}$
 is bounded from above  solely in terms of
$[K:\IQ]$. In particular, the bound does not depend on the
 height of $E$.
Our method  
allows us to assume that the height of $E$ is bounded. 
So the deep uniformity aspect  in Merel's work
will not play a role here. On the other hand, our argument
is quite sensitive in the dependency in $[K:\IQ]$ of the bound for
$\#\tors{E(K)}$.

The following result of David is essentially best possible
 with regard to the
degree for an unrestricted elliptic curve.

For a definition and basic properties of the absolute logarithmic Weil
height $\heightS$, or just height for short, we refer to Chapter 1.5 in Bombieri
and Gubler's book \cite{BG}.

\begin{theorem}[David \cite{DavidPetiteHauteur}]
\label{lem:david}
There exists a positive absolute  constant $c_4$ with the following property.
  Let $E$ be an elliptic curve defined over a number field $K$ and
 let $h_0\ge 1$ be a bound for the height of the $j$-invariant
  of $E$.
Then
\begin{equation*}
 \#\tors{E(K)} \le c_4 h_0 [K:\IQ] \log(3[K:\IQ]). 
\end{equation*}
\end{theorem}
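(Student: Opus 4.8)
The statement is David's theorem, so in the paper it is simply cited; the plan below is how I would approach a proof. Fix $E$ over the number field $K$, set $D=[K:\IQ]$, and let $P\in E(K)$ be a torsion point of exact order $N$. First I would replace $E$ by a Weierstrass model over a bounded-degree extension of $K$ with coefficients of height $\ll h_0$, which is possible because $\height{j_E}\le h_0$. From such a model, classical estimates bounding the periods in terms of the archimedean size of $j$ produce a period lattice basis $\omega_1,\omega_2$ of $E(\IC)$ with $e^{-c_5 h_0}\le|\omega_i|\le e^{c_5 h_0}$ and $\mathrm{Im}(\omega_2/\omega_1)\ge e^{-c_5 h_0}$, together with a $\wp$-parametrisation in which $P$ is an algebraic point of controlled height and degree over $\IQ$.

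The next step is to read the torsion condition as a vanishing linear form in elliptic logarithms: an elliptic logarithm $u$ of $P$ satisfies $Nu=p\omega_1+q\omega_2$ with $0\le p,q<N$ and $\gcd(p,q,N)=1$, that is, $Nu-p\omega_1-q\omega_2=0$. I would then invoke David's lower bounds for linear forms in elliptic logarithms. Since the form vanishes, one is in the degenerate case: the usual auxiliary-function / interpolation-determinant construction, carried out with the height and period bounds above, shows that if $N>c_4 h_0 D\log(3D)$ then the relation can hold only when $P=0$, contradicting $N\ge 2$. Hence $N\le c_4 h_0 D\log(3D)$; the factor $\log(3D)$ and, above all, the fact that the dependence on $D$ is linear rather than quadratic are the refined output of David's estimates.

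Finally I would pass from the order of a single point to the size of the whole group. Writing $\tors{E(K)}\cong\IZ/a\IZ\times\IZ/b\IZ$ with $a\mid b$, the exponent $b$ is the order of some point of $E(K)$, so $b\ll h_0 D\log(3D)$ by the previous step, while if $a\ge 2$ then the Weil pairing gives $\mu_a\subset K$ and hence $\varphi(a)\le D$. Combining the two bounds --- and here one must use David's sharpest form of the elliptic estimate, which is what keeps the overall dependence on $[K:\IQ]$ linear up to the logarithm instead of letting these factors multiply --- yields $\#\tors{E(K)}\le c_4 h_0 D\log(3D)$. I expect the main obstacle to be exactly this transcendence input: extracting from the vanishing linear form a bound that is linear in $[K:\IQ]$ is delicate, and it is precisely this that makes it worthwhile to quote David's theorem as a black box rather than reprove it.
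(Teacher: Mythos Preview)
You correctly note that the paper does not prove this statement: it simply invokes Th\'eor\`eme~1.2(i) of David's paper together with the remark that torsion points have N\'eron--Tate height zero. So as far as matching the paper goes, there is nothing more to say.

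Your sketch of an independent proof, however, has a real gap at the very last step. Bounding the exponent $b$ of $\tors{E(K)}$ by $c\,h_0 D\log(3D)$ via linear forms in elliptic logarithms is plausible, and the Weil pairing indeed forces $\varphi(a)\le D$, hence $a\ll D\log\log(3D)$. But multiplying these gives
\[
\#\tors{E(K)} = ab \;\ll\; h_0\,D^{2}\,(\log 3D)(\log\log 3D),
\]
which is quadratic in $D$, not linear. You flag this yourself, but the sentence ``one must use David's sharpest form of the elliptic estimate'' does not explain any mechanism by which the two factors of $D$ collapse into one; with the exponent-plus-Weil-pairing decomposition they simply do not. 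This is precisely why the route via bounding the exponent of a single point is not how David obtains the linear bound.

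David's Th\'eor\`eme~1.2(i) is a statement counting \emph{all} points of small N\'eron--Tate height at once: it bounds directly the cardinality of $\{P\in E(K):\hat h(P)\le \epsilon\}$ (for suitable $\epsilon$) by $c\,h_0 D\log(3D)$, using an auxiliary-function construction together with a zero estimate on the elliptic curve. Torsion points have height zero, so they are counted. The linear dependence on $D$ comes out of this global counting argument, not from combining a pointwise order bound with group-structure information. If you want to rederive the result rather than cite it, that is the argument to reproduce.
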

\begin{proof}
  This follows from Th\'eor\`eme 1.2(i) \cite{DavidPetiteHauteur}. Indeed, torsion
  points have N\'eron-Tate height zero.
\end{proof}

Our  approach works as long as one has a  bound  of the form
  $\# \tors{E(K)}\le c(h_0)[K:\IQ]^{\kappa}$ 
with fixed $\kappa < 3/2$  and where $c(h_0)$ is allowed to depend on $h_0$.

\subsection{Torsion Points on $\XL$}
\label{sec:torsion}

Throughout this subsection we work with a fixed 
torsion point $P=(P_1,P_2,P_3,\lambda)\in \XL(\IQbar)$.
 We will additionally assume
 \begin{equation*}
 \height{\lambda}\le B; 
 \end{equation*}
 here $B$ is the parameter introduced in
beginning of this section.
It will be fixed at a later point in the proof and may
depend on $\XL$ but not on $P$.
We recall that $\delta,c_1,c_2,\ldots$ may depend on $B$; but they
shall not depend on $P$.

Let $N$ be the order of $P$.
For brevity, say
$K = F(P) \subset \IQbar$  and $D = [K:F]$.
We remark $\lambda \in K\ssm\{0,1\}$.
We write $\Sigma$ for the set of  embeddings $\sigma:K\rightarrow
\IC$ that restrict to the identity on $F$. Then $\#\Sigma =  D$.

\begin{lemma}
\label{lem:constructGa}
There exist a   positive absolute  constant $c_8$ and
 $\chi\in \IZ^3\ssm \{0\}$ with 
\begin{equation*}
\max\{N,|\chi|^3\}\le c_8 D \log (3D)   
\end{equation*}
such that $P \in G_\chi$.
\end{lemma}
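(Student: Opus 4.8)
The plan is to produce the relation $\chi$ by a pigeonhole / counting argument applied to the torsion coordinates of $P$, controlling the order $N$ and the degree $D$ via David's theorem. First I would bound $N$. The point $P=(P_1,P_2,P_3,\lambda)$ has order $N$, so each $P_i$ is torsion on $(\EL)_\lambda$ of order dividing $N$, and the group $(\EL)_\lambda[N]$ is defined over $K=F(P)$. Hence the full $N$-torsion of $(\EL)_\lambda$ is defined over an extension $K'$ of $K$ of degree at most, say, a bounded power; more efficiently, the point of maximal order among $P_1,P_2,P_3$ already has order $N$ and is defined over $K$, so Theorem \ref{lem:david} applied to $E=(\EL)_\lambda$ over $K$ gives $N\le\#\tors{E(K)}\le c_4 h_0[K:\IQ]\log(3[K:\IQ])$. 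Here I need $h_0$, a bound for the height of the $j$-invariant of $(\EL)_\lambda$: since $j(\lambda)$ is a fixed rational function of $\lambda$ and $\height{\lambda}\le B$, standard height inequalities for rational maps (Chapter 1.5 of \cite{BG}) give $\height{j(\lambda)}\le c_5 B + c_6$, which is absorbed into a constant depending only on $B$ (hence on the ``usual data''). Thus $N\le c_7 [K:\IQ]\log(3[K:\IQ])$, and since $[K:\IQ]=D[F:\IQ]\le c\, D$, we get $N\le c_8' D\log(3D)$ after adjusting constants and using that $x\log(3x)$ is increasing.

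Next I would extract the linear relation $\chi$. Since $P_1,P_2,P_3$ all lie in the $N$-torsion subgroup $(\EL)_\lambda[N]\cong(\IZ/N\IZ)^2$, the three of them are automatically $\IZ$-linearly dependent modulo $N$: the subgroup they generate inside $(\IZ/N\IZ)^2$ has rank at most $2$. Concretely, consider the map $\IZ^3\to(\EL)_\lambda[N]$, $(\alpha,\beta,\gamma)\mapsto[\alpha](P_1)+[\beta](P_2)+[\gamma](P_3)$; its image has size at most $N^2$, so by pigeonhole among the $(\lfloor M\rfloor+1)^3$ triples with $0\le\alpha,\beta,\gamma\le M$, as soon as $(M+1)^3>N^2$ two of them have the same image, and their difference is a nonzero $\chi=(\alpha,\beta,\gamma)\in\IZ^3$ with $|\chi|\le M$ and $[\alpha](P_1)+[\beta](P_2)+[\gamma](P_3)=0$, i.e. $P\in G_\chi$. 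Choosing $M$ minimal with $(M+1)^3>N^2$ gives $|\chi|^3\le (M+1)^3 \le 8N^2$ (crudely), hence $|\chi|^3\le c\, N^2 \le c\,(c_8' D\log(3D))^2$. This is slightly worse than the stated $|\chi|^3 \le c_8 D\log(3D)$, so I would instead take $M$ with $(M+1)^3 > N$—wait, that does not force a collision in a group of size $N^2$.

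The correct efficient route, which I would actually use, is to work one relation at a time in the rank-$\le 2$ lattice: since $P_1,P_2,P_3\in(\IZ/N\IZ)^2$, there is a nonzero $\chi\in\IZ^3$ annihilating them with $|\chi|\le c\sqrt{N}$ by Minkowski's theorem applied to the lattice $\{(\alpha,\beta,\gamma):[\alpha]P_1+[\beta]P_2+[\gamma]P_3=0\}\subset\IZ^3$, which has index at most $N^2$ and hence covolume at most $N^2$, so contains a nonzero vector of sup-norm $\le (N^2)^{1/3}$. That gives $|\chi|^3\le N^2$, and combined with $N\le c_8'D\log(3D)$ yields $\max\{N,|\chi|^3\}\le \max\{c_8'D\log(3D), (c_8'D\log(3D))^2\}$; since $D\ge1$ this is $\le c_8 (D\log(3D))^2$. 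If the paper genuinely wants the exponent $1$, then $N$ itself must be bounded linearly, which forces a better torsion bound than David's in general—so I suspect the intended reading is that the bound on $|\chi|^3$ need only be polynomial in $D\log(3D)$, and the constant $c_8$ and implicit exponent are what matter for the subsequent counting argument. In writing it up I would state the Minkowski step carefully (verifying the lattice has finite index dividing $N^2$ because the quotient embeds in $(\EL)_\lambda[N]$) and then combine with the order bound. The main obstacle is bookkeeping: making sure the constant is genuinely absolute where claimed and only depends on $B$ through $h_0$, and getting the exponent of $D\log(3D)$ to match whatever precise form the later proof needs; the mathematical content—David's bound plus a geometry-of-numbers pigeonhole—is routine.
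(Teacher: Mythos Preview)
Your overall strategy---David's bound plus geometry of numbers---is exactly the paper's, but you miss one observation and as a result fail to reach the stated exponent, which does matter downstream.

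The index of the relation lattice $\{\chi\in\IZ^3:[\alpha]P_1+[\beta]P_2+[\gamma]P_3=0\}$ in $\IZ^3$ is not merely at most $N^2$: it equals $\#\Gamma$, where $\Gamma=\langle P_1,P_2,P_3\rangle\subset(\EL)_\lambda(K)$ is the subgroup generated by the three coordinates. Writing $\Gamma\cong(\IZ/N\IZ)\times(\IZ/R\IZ)$ with $R\mid N$, Minkowski (or Siegel's Lemma, as the paper phrases it) gives a nonzero $\chi$ with $|\chi|^3\le c\,NR$. Now the crucial point: David's theorem bounds not just the maximal order $N$ but the whole torsion group $\tors{E(K)}$, hence $NR=\#\Gamma\le\#\tors{E(K)}\le c\,D\log(3D)$. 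This yields $|\chi|^3\le c_8 D\log(3D)$ directly, with the linear exponent as stated.

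Your weaker bound $|\chi|^3\le c(D\log 3D)^2$ would not suffice: in the pigeonhole step (Lemma~\ref{lem:pigeonhole}) the number of boxes is of order $|\chi|^2$, and one needs $|\chi|^2\ll D$ to retain a growing number of conjugates per box. With $|\chi|^2\sim D^{4/3}$ the argument collapses. So the exponent is not cosmetic, and your suspicion that ``polynomial in $D\log(3D)$'' is the intended reading is mistaken.
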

\begin{proof}
  The three torsion points $P_1,P_2,P_3$ generate a finite subgroup $\Gamma$ of  
$\tors{(\EL)_\lambda(K)}$.
Being a finite subgroup of an elliptic curve, $\Gamma$ is isomorphic
to 
$(\IZ/N'\IZ)\times (\IZ/ R\IZ)$ for some positive integers $R| N'$.
Since $\Gamma$ is killed by multiplication by $N$ we find
$N'|N$. But we must have $N'=N$ since $P$ has order  $N$.

Finding $\chi=(\alpha,\beta,\gamma)\in \IZ^3\ssm\{0\}$ with $[\alpha](P_1) +
[\beta](P_2)+[\gamma](P_3)
= 0$ on $(\EL)_\lambda$ amounts to finding $(\alpha,\beta,\gamma,*,*)\in
\IZ^5\ssm\{0\}$ 
 in the kernel of  a certain matrix
\begin{equation}
\label{eq:linsystem}
\left[
  \begin{array}{ccccc}
    * & * & * & N & 0 \\
    * & * & * & 0 & R
  \end{array}
\right]
\end{equation}
where the entries denoted by $*$ are integers;
in the first and second row they lie in $[-N/2,N/2]$
and $[-R/2,R/2]$, respectively.

We apply  Siegel's Lemma as stated in Corollary 2.9.7
\cite{BG}.
The height of the system (\ref{eq:linsystem})
is at most $c_5 NR$ with $c_5 > 0$ absolute. 
Since our system has three independent solutions, there is a solution
in $\IZ^5\ssm\{0\}$ with maximum norm at most $c_6(NR)^{1/3}$. 
Forgetting the last two  coordinates gives
\begin{equation}
\label{eq:bounda}
  |\chi| \le c_6 (NR)^{1/3}.
\end{equation}

On the other hand, we have $NR= \# \Gamma \le \# \tors{(\EL)_\lambda(K)}$.
David's result from the last section 
implies $NR\le c_4 h_0 D\log(3D)$, here $h_0$ is $1$ more than  the
height of the $j$-invariant of $(\EL)_\lambda$.
This $j$-invariant equals
\begin{equation}
\label{eq:lambdatoj}
  j = 2^8 \frac{(\lambda^2-\lambda+1)^3}{\lambda^2(\lambda-1)^2}
\end{equation}
by Proposition III 1.7(b) \cite{Silverman:AEC}.
Elementary height inequalities imply that $h_0$ is bounded in
terms of $\height{\lambda}$. So 
$h_0$ is bounded in terms of $B$.
Hence $NR\le c_7 D \log(3D)$
and in particular
 $N \le c_7 D \log(3D)$.
This is the bound for $N$ in the assertion. 
We find  the bound for $|\chi|^3$ by recalling (\ref{eq:bounda}).
\end{proof}



Any embedding $\sigma\in\Sigma$  determines a torsion point
$P^\sigma=\sigma(P)\in \XL(\IQbar)$.

\begin{lemma}
  For $\delta \in (0,1]$ sufficiently small in terms of $B$ and $F$
  there is a positive constant $c_9\le 1$
and an index $1\le i_0\le c_1$ such that for at least $c_9D$
embeddings $\sigma\in \Sigma$ we have
\begin{equation*}
 \pi(P^\sigma)  \in \Lambda_\delta
\qaq P^\sigma \in U_{i_0}.
\end{equation*}
\end{lemma}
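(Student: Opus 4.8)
The plan is to combine a lower bound for the size of the Galois orbit of $P$ with the observation that most conjugates $\lambda^\sigma$ stay away from the bad locus $\{0,1,\infty\}$, and then to use a pigeonhole argument over the finitely many neighbourhoods $U_1,\dots,U_{c_1}$. First I would record that $\pi(P^\sigma) = \lambda^\sigma$ for every $\sigma\in\Sigma$, so that the first assertion $\pi(P^\sigma)\in\Lambda_\delta$ is purely a statement about the conjugates of the algebraic number $\lambda$. Since $\height{\lambda}\le B$ and $\lambda$ has degree $D$ over $F$ (hence degree at most $[F:\IQ]D$ over $\IQ$), the conjugates of $\lambda$ have bounded height; by a standard estimate the number of conjugates $\lambda^\sigma$ with $|\lambda^\sigma|<\delta$, or $|\lambda^\sigma|>\delta^{-1}$, or $|1-\lambda^\sigma|<\delta$ is $O(\delta^{\kappa} D)$ for some positive $\kappa$ (one can take $\kappa$ as small as convenient, e.g.\ using that $\prod_\sigma\max\{1,|\lambda^\sigma|\}$ and $\prod_\sigma\max\{1,|1-\lambda^\sigma|\}$ are bounded in terms of $B$ and $[F:\IQ]$, so only few factors can be large, and similarly for $\prod_\sigma\max\{1,|\lambda^\sigma|^{-1}\}$ after noting $\lambda^{-1}$ also has bounded height since $\lambda$ is a unit-free estimate via $\height{\lambda^{-1}}=\height{\lambda}$). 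Choosing $\delta$ small enough in terms of $B$ and $F$, the number of ``bad'' embeddings is at most $\tfrac12 D$, so at least $\tfrac12 D$ embeddings $\sigma$ satisfy $\pi(P^\sigma)=\lambda^\sigma\in\Lambda_\delta$.

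Next I would handle the second assertion. For each such ``good'' embedding $\sigma$ the point $P^\sigma$ lies in $\pi|_X^{-1}(\Lambda_\delta) = U_1\cup\cdots\cup U_{c_1}$ by the covering property established before the statement. Hence each good $\sigma$ falls into at least one of the $c_1$ classes according to which $U_i$ contains $P^\sigma$. By pigeonhole there is an index $1\le i_0\le c_1$ such that $P^\sigma\in U_{i_0}$ for at least $\tfrac{1}{2c_1}D$ of the good embeddings $\sigma$; these $\sigma$ then also satisfy $\pi(P^\sigma)\in\Lambda_\delta$. Setting $c_9 = \tfrac{1}{2c_1}$ (which is $\le 1$ and depends only on the usual data, not on $P$) gives the lemma.

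The main obstacle is really just the first step: making precise the claim that few conjugates of $\lambda$ approach $0$, $1$, or $\infty$, and checking that the threshold $\delta$ can be chosen depending only on $B$ and $F$ and not on $P$ or $N$. The key point is that the height bound $\height{\lambda}\le B$ controls the full multiset of conjugates uniformly: the quantities $\sum_\sigma \log^+|\lambda^\sigma|$, $\sum_\sigma\log^+|\lambda^\sigma|^{-1}$, and $\sum_\sigma\log^+|1-\lambda^\sigma|$ are each $O(D)$ with implied constant depending only on $B$ and $[F:\IQ]$ (the last using $\height{1-\lambda}\le \height{\lambda}+\log 2$). Therefore for any $\delta\in(0,1]$ the number of $\sigma$ with $|\lambda^\sigma|<\delta$, $|\lambda^\sigma|>\delta^{-1}$, or $|1-\lambda^\sigma|<\delta$ is at most $c\,D/\log(1/\delta)$; choosing $\delta$ small enough in terms of $B$ and $F$ forces this to be $\le \tfrac12 D$. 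A minor technical point I would also address is that distinct $\sigma\in\Sigma$ may give the same conjugate $\lambda^\sigma$; this only helps, since it means the bad embeddings are no more numerous than the bad conjugates counted with the multiplicity $[F(\lambda):F]$-to-$[K:F]$, which is absorbed into the constant. Everything else is elementary bookkeeping and the pigeonhole principle.
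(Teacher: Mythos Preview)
Your proposal is correct and follows essentially the same route as the paper: bound the archimedean contributions to $\height{\lambda}$, $\height{\lambda^{-1}}$, and $\height{(1-\lambda)^{-1}}$ by $O(B)$, deduce that at most $O(D/\log(1/\delta))$ embeddings $\sigma$ can send $\lambda$ near $0$, $1$, or $\infty$, choose $\delta$ small in terms of $B$ and $[F:\IQ]$ to make this at most $D/2$, and then pigeonhole over the $U_i$ to get $c_9=1/(2c_1)$. Two small slips to clean up: the claim in your first paragraph that the bad set has size $O(\delta^\kappa D)$ is not the right shape (you correctly replace it by $cD/\log(1/\delta)$ in the last paragraph), and to control the conjugates with $|1-\lambda^\sigma|<\delta$ you need $\sum_\sigma\log^+|1-\lambda^\sigma|^{-1}$ rather than $\sum_\sigma\log^+|1-\lambda^\sigma|$; this is exactly what $\height{(1-\lambda)^{-1}}=\height{1-\lambda}\le\height{\lambda}+\log 2$ bounds.
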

\begin{proof}
A similar statement was given in Lemma 6.2
\cite{MZ:torsionanomalousAJ}. Recall $\lambda=\pi(P)\in K$.
Let
 $\delta \in (0,1]$ and let us assume
 $\sigma(\lambda)\not\in\Lambda_\delta$ for 
 more than $D/2$ embeddings $\sigma\in\Sigma$.
Then one of 
\begin{equation*}
|\sigma(\lambda)|>\delta^{-1},\quad
  |\sigma(\lambda)|^{-1}>\delta^{-1}, \quad
|1-\sigma(\lambda)|^{-1}> \delta^{-1}
\end{equation*}
holds for more than $D/6$ embeddings $\sigma\in\Sigma$.

By elementary height properties  we have
$\height{\lambda^{-1}}=\height{\lambda}\le B$ and
$\height{(1-\lambda)^{-1}} = \height{1-\lambda}\le
\height{\lambda}+\log 2 \le B+\log 2$.
  The definition of the height as stated on the bottom of page 16 \cite{BG} implies
\begin{alignat*}1
\height{\lambda}&+\height{\lambda^{-1}}+\height{(1-\lambda)^{-1}} \\
&\ge \frac{1}{[K:\IQ]}
\sum_{\sigma:K\rightarrow\IC}
\log\left(\max\left\{1,|\sigma(\lambda)|\right\}
 \max\left\{1,\frac{1}{|\sigma(\lambda)|}\right\} 
\max\left\{1,\frac{1}{|1-\sigma(\lambda)|}\right\}\right)
\end{alignat*}
here $\sigma$ runs over all embeddings of $K$ into $\IC$.
We bound $3B+\log 2 \ge D/(6[K:\IQ]) \log(\delta^{-1})$.
But $[K:\IQ] = D[F:\IQ]$,
so $\log(\delta^{-1})\le 6[F:\IQ](3B+\log 2)$.

So if $\delta \in (0,1]$ is sufficiently small with
respect to $B$ and $F$ there are least $D/2$ embeddings
$\sigma\in\Sigma$
satisfying $\sigma(\lambda)=\pi(P^\sigma)\in\Lambda_\delta$.
Recall that $\pi|_X^{-1}(\Lambda_\delta)$
is covered by  $U_1,\dots,U_{c_1}$.
The lemma  follows from  the Pigeonhole Principle
on taking $c_9 = 1/(2 c_1)$.
\end{proof}

We fix $\delta$ and $i$ once and for all as in this lemma and let
$\Sigma'\subset \Sigma$ denote the subset provided  therein.
We abbreviate
$U=U_{i_0}$, $\Dset=\Dset_{i_0}$, $Z=Z_{i_0}$, and $\theta=\theta_{i_0}$ from
  Subsection \ref{sec:ominimal}.
The fact that $i_0$ may depend on $P$ will be harmless. 

The conjugates $P^\sigma$ lie in $U$ for all $\sigma\in \Sigma'$.
We denote their images under $\theta$ by
\begin{equation*}
 \xi^\sigma =
 (\xi^\sigma_1,\dots,\xi^\sigma_6) = \theta(P^\sigma)
\in \Dset.
\end{equation*}
Since $P^\sigma$ has order $N$ we have
$\xi^\sigma \in \frac 1N \IZ^6$ for the coordinates in terms of the
period lattice basis.

Before we continue, let us recapitulate the current situation and also
describe how we will proceed.
In total there are $D$ conjugates of $P$ over $F$. Of these, a fixed positive
proportion lies on the set $U\subset X$.  
So by Lemma \ref{lem:constructGa}, the number of conjugates on $U$ is
at least of  order $N/\log N$.
The next lemma is crucial.
It states 
that among the embeddings considered above,   at least approximately $N^{1/3}/\log N$
yield a $\xi^\sigma$ in a fixed fiber of the definable family $Z$
constructed around (\ref{eq:defZi}). 
 We will show that
the number of $\xi^\sigma$ equals the number of conjugates $P^\sigma$,
at least in the most interesting cases. 
As we have seen above, the $\xi^\sigma$ are rational. 
Their heights turn out to be bounded
linearly in terms of $N$. Consequentially, we will have found many rational points of bounded height 
on a fixed fiber of $Z$. 
But we have no
control over the  precise fiber containing these rational points;
its existence is derived from the Pigeonhole Principle.
This is compensated by the fact that the Pila-Wilkie Theorem is 
 uniform over definable families.
We  then  conclude the existence
of a semi-algebraic curve inside a fixed fiber of $Z$.
Such a curve will lead to a torsion anomalous subvariety of $X$.

\begin{lemma}
\label{lem:pigeonhole}
  There exist a positive constant $c_{12}$, a tuple
 $y=(\alpha,\beta,\gamma,*,*)\in \IZ^5$
with $(\alpha,\beta,\gamma)\not=0$, and a
  subset $\Sigma''\subset\Sigma'$ with 
  \begin{equation*}
 \# \Sigma'' \ge c_{12}
  \frac{N^{1/3}}{\log(3N)}   
\quad\text{such that}\quad
\xi^{\sigma} \in Z_{y}
\quad\text{for all}\quad
\sigma\in\Sigma''.
  \end{equation*}
\end{lemma}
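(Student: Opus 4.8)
The plan is to feed the \emph{single} integral relation furnished by Lemma~\ref{lem:constructGa} into all conjugates of $P$ at once, record it in the real coordinates $\xi^\sigma$ so that each lands in some fibre of the family $Z$ from (\ref{eq:defZi}), bound the number of fibres that can occur, and finish by the Pigeonhole Principle, converting the resulting count from the degree $D$ to the order $N$.

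Let $\chi=(\alpha,\beta,\gamma)\in\IZ^3\ssm\{0\}$ be as in Lemma~\ref{lem:constructGa}, so $P\in G_\chi$ and $|\chi|^3\le c_8 D\log(3D)$. Since the morphisms $[\alpha],[\beta],[\gamma]$ are defined over the base, applying any $\sigma\in\Sigma'$ to the identity $[\alpha](P_1)+[\beta](P_2)+[\gamma](P_3)=0$ gives $[\alpha](P_1^\sigma)+[\beta](P_2^\sigma)+[\gamma](P_3^\sigma)=0$, that is, $P^\sigma\in G_\chi$: every conjugate satisfies the \emph{same} relation, only the period lattice of its fibre varies. To read this off, fix $\sigma\in\Sigma'$, put $Q=P^\sigma\in U$, and write $z_1(Q),z_2(Q),z_3(Q)$ for the elliptic logarithms of $Q_1,Q_2,Q_3$ and $f(Q),g(Q)$ for the chosen period basis of the fibre. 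Because the elliptic exponential is a group homomorphism, $\alpha z_1(Q)+\beta z_2(Q)+\gamma z_3(Q)$ is an elliptic logarithm of $[\alpha](Q_1)+[\beta](Q_2)+[\gamma](Q_3)=0$, hence a period: there are $\psi_\sigma,\omega_\sigma\in\IZ$ with $\alpha z_1(Q)+\beta z_2(Q)+\gamma z_3(Q)=\psi_\sigma f(Q)+\omega_\sigma g(Q)$. Substituting $z_1(Q)=\xi_1^\sigma f(Q)+\xi_2^\sigma g(Q)$ and the analogous expressions for $z_2(Q),z_3(Q)$ --- here it is essential that all three elliptic logarithms are written against the \emph{same} basis $(f,g)$, which is exactly how $\theta$ was built --- and comparing coefficients via the $\IR$-linear independence of $f(Q),g(Q)$, one gets $\alpha\xi_1^\sigma+\beta\xi_3^\sigma+\gamma\xi_5^\sigma=\psi_\sigma$ and $\alpha\xi_2^\sigma+\beta\xi_4^\sigma+\gamma\xi_6^\sigma=\omega_\sigma$. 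By the definition (\ref{eq:defZi}) of $Z$ this says precisely $\xi^\sigma\in Z_y$ for $y=(\alpha,\beta,\gamma,\psi_\sigma,\omega_\sigma)\in\IZ^5$.

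It remains to count. The triple $(\alpha,\beta,\gamma)=\chi$ is fixed, so the only freedom in $y$ lies in $(\psi_\sigma,\omega_\sigma)$, and by Lemma~\ref{lem:basic}(iii) we have $|\xi^\sigma|\le c_2$, whence $|\psi_\sigma|,|\omega_\sigma|\le 3c_2|\chi|\le 3c_2(c_8 D\log(3D))^{1/3}$. Being integers, these take at most $c_{10}(D\log(3D))^{2/3}$ pairs of values over $\sigma\in\Sigma'$. Since $\#\Sigma'\ge c_9 D$ by the preceding lemma, the Pigeonhole Principle yields one tuple $y$, with $(\alpha,\beta,\gamma)=\chi\not=0$, whose fibre $Z_y$ contains $\xi^\sigma$ for a subset $\Sigma''\subset\Sigma'$ of size at least $c_{11}D^{1/3}/(\log(3D))^{2/3}$. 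Finally, $N\le c_8 D\log(3D)$ converts this into $\#\Sigma''\ge c_{12}N^{1/3}/\log(3N)$ by a short elementary estimate (distinguishing $D\ge N$ from $D<N$, with the boundedly many remaining cases absorbed into $c_{12}$). The only point I expect to require genuine care is this numerology: the number $(D\log(3D))^{2/3}$ of fibres that can occur must beat $\#\Sigma'\ge c_9 D$ by a true power of $D$, and this works only because Lemma~\ref{lem:constructGa} --- via David's bound (Theorem~\ref{lem:david}) --- controls $|\chi|$ by a cube root of $D\log(3D)$ rather than by $N$ itself; everything after that is bookkeeping.
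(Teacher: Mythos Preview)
Your proof is correct and follows essentially the same approach as the paper: take the single relation $\chi$ from Lemma~\ref{lem:constructGa}, observe that the induced period-coordinate relation puts each $\xi^\sigma$ into a fibre $Z_y$ with $y=(\chi,\psi_\sigma,\omega_\sigma)\in\IZ^5$, bound the number of possible $(\psi_\sigma,\omega_\sigma)$ by $c_{10}(D\log(3D))^{2/3}$ via Lemma~\ref{lem:basic}(iii) and the bound on $|\chi|$, and pigeonhole against $\#\Sigma'\ge c_9D$ before converting from $D$ to $N$. Your exposition of why the period coordinates satisfy the integral relation is somewhat more detailed than the paper's, but the argument is identical.
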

\begin{proof}
 Let $\chi=(\alpha,\beta,\gamma)$ be as in Lemma \ref{lem:constructGa}.
Then $P\in G_\chi$ and even $P^\sigma \in G_\chi$ for all
$\sigma\in\Sigma$. 
For $\sigma\in\Sigma'$, the 
 period coordinates  satisfy
  \begin{equation}
\label{eq:lincomb}
    (\alpha \xi^\sigma_1 + \beta \xi^\sigma_3 + \gamma \xi^\sigma_5,
\alpha \xi^\sigma_2 + \beta \xi^\sigma_4 + \gamma \xi^\sigma_6) \in \IZ^2.
  \end{equation}

A simply application of the triangle inequality together with the
bound for $\xi^\sigma_j$ from Lemma \ref{lem:basic}(iii) gives
\begin{equation*}
  |\alpha \xi^\sigma_1 + \beta \xi^\sigma_3 + \gamma \xi^\sigma_5 |
\le 3 c_2 |\chi|.
\end{equation*}
The same bound holds for
 $|\alpha \xi^\sigma_2 + \beta \xi^\sigma_4 + \gamma \xi^\sigma_6 |$.
So the number of possibilities for the integral vector
(\ref{eq:lincomb}) is at most
 $(6c_2|\chi|+1)^2$ as $\sigma$ runs over $\Sigma'$.
Using Lemma \ref{lem:constructGa}, the number of possibilities is at most
 $c_{10} D^{2/3} \log(3D)^{2/3}$.

We recall $\#\Sigma' \ge c_9D$.
By the Pigeonhole Principle
there is a subset $\Sigma''\subset \Sigma'$ with 
\begin{equation*}
  \#\Sigma'' \ge \frac{c_9 D}{c_{10} D^{2/3} \log(3D)^{2/3}}
= c_{11} \left(\frac{D}{\log(3D)^{2}}\right)^{1/3}
\end{equation*}
such that (\ref{eq:lincomb}) attains the same value  for all
$\sigma\in\Sigma''$.
We use elementary
estimates and $N \le c_8 D\log(3D)$ from Lemma \ref{lem:constructGa}   to conclude
\begin{equation*}
  \#\Sigma'' \ge 
c_{11}\left(\frac{D\log(3D)}{\log(3D)^{3}}\right)^{1/3}
\ge
c_{11} \left(\frac{D\log(3D)}{\log(3D\log(3D))^{3}}\right)^{1/3}
\ge c_{12} \frac{N^{1/3}}{\log(3N)}.\qedhere
\end{equation*}
\end{proof}


We recall some notation  from \cite{Hab:Special}.
There $\ker[N]$ was defined as  the kernel of
 the multiplication
by $N$ morphism $[N]:\mathcal{E}_L^3\rightarrow \mathcal{E}_L^3$. 

Next we find a condition which guarantees 
that the
conjugates of $P$ indeed lead to many rational points $\xi^\sigma$.
 The condition is satisfied if for example $P$ is not inside an
anomalous subvariety of $X$.

\begin{lemma}
\label{lem:remtor}
Let us assume that $\{P\}$ is an irreducible component
of $\XL \cap \ker[N]$.
Then $\theta|_U:U\rightarrow \IR^6$ is injective
and in particular,
$\#\{\xi^\sigma;\,\,\sigma\in\Sigma''\}=\#\Sigma''$.
\end{lemma}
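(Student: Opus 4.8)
The plan is to argue by contradiction. By Lemma~\ref{lem:basic}(i) --- in the renamed notation --- the map $\theta|_U$ is either injective, in which case there is nothing to prove, or else \emph{every} fiber of $\theta|_U$ has no isolated point; so suppose the latter. We have $\#\Sigma'\ge c_9D>0$, so $\Sigma'$ is nonempty and we may fix some $\sigma\in\Sigma'$. Then $P^\sigma\in U$ and $P^\sigma$ is a torsion point of order $N$. The fiber $\theta|_U^{-1}(\xi^\sigma)$ contains $P^\sigma$ and has no isolated point, so $P^\sigma$ is not isolated in it; hence there is a sequence $Q_1,Q_2,\dots$ in $U\ssm\{P^\sigma\}$ with $\theta(Q_n)=\xi^\sigma$ for all $n$ and $Q_n\to P^\sigma$ in the Euclidean topology.

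Next I would check that each $Q_n$ lies in $\XL\cap\ker[N]$. As $P^\sigma$ has order $N$, its period coordinates satisfy $\xi^\sigma=\theta(P^\sigma)\in\frac1N\IZ^6$, whence $\theta(Q_n)\in\frac1N\IZ^6$ as well. Unwinding the definition of $\theta$, this says that each of the three elliptic logarithms of $Q_n$ recorded by $\theta$ has both of its coordinates in $\frac1N\IZ$ with respect to the period-lattice basis attached to the chart $U$; evaluated at $Q_n$ that basis generates the period lattice of the fiber $(\EL)_{\pi(Q_n)}$, so each elliptic logarithm lies in $\frac1N$ times that lattice, i.e.\ the three components of $Q_n$ are $N$-torsion in their common fiber. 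Hence $Q_n\in\ker[N]$, and $Q_n\in U\subset\XL$, so $Q_n\in\XL\cap\ker[N]$. In particular $P^\sigma$ is a non-isolated point of $\XL\cap\ker[N]$ for the Euclidean topology.

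Now I invoke the hypothesis, transported by Galois. Since $\{P\}$ is an irreducible component of $\XL\cap\ker[N]$ and an irreducible component is a maximal irreducible closed subset, $P$ lies on no other component; removing the remaining finitely many components exhibits $\{P\}$ as a Zariski open subset of $\XL\cap\ker[N]$, so $P$ is Zariski isolated there. The set $\XL\cap\ker[N]$ is defined over $F$ (over $F$ because $\XL$ is, over $\IQ$ because $\ker[N]$ is), so any extension of $\sigma$ to an element of $\mathrm{Gal}(\IQbar/F)$ maps $\XL\cap\ker[N]$ onto itself and is a homeomorphism for the Zariski topology, carrying the isolated point $P$ to the isolated point $P^\sigma$. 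Thus $P^\sigma$ is Zariski isolated, hence Euclidean isolated, in $\XL\cap\ker[N]$ --- contradicting the previous paragraph. Therefore $\theta|_U$ is injective. The final assertion is then immediate: because $K=F(P)$, the map $\sigma\mapsto P^\sigma$ is injective on $\Sigma$, so the points $P^\sigma$ with $\sigma\in\Sigma''\subset\Sigma'$ are pairwise distinct elements of $U$, and injectivity of $\theta|_U$ forces the values $\xi^\sigma=\theta(P^\sigma)$, $\sigma\in\Sigma''$, to be pairwise distinct; hence $\#\{\xi^\sigma;\,\sigma\in\Sigma''\}=\#\Sigma''$.

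The step I expect to be the main obstacle is the implication $\theta(Q_n)\in\frac1N\IZ^6\Rightarrow Q_n\in\ker[N]$: one has to keep in mind that the period-lattice basis is evaluated at the \emph{moving} point $Q_n$, where it spans the period lattice of $(\EL)_{\pi(Q_n)}$ and not that of $(\EL)_{\pi(P^\sigma)}$, so that integrality of the period coordinates genuinely says the components of $Q_n$ are torsion in the correct fiber. Two further points require a little care: that ``fibers without isolated points'' really yields a Euclidean-convergent sequence accumulating at $P^\sigma$; and --- since $P$ itself need not lie in the chart $U$ --- that the property of $\{P\}$ being a zero-dimensional irreducible component, i.e.\ of $P$ being an isolated point, is transported to $P^\sigma$ by the Galois action.
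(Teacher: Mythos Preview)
Your proposal is correct and follows essentially the same approach as the paper's proof: both invoke the dichotomy from Lemma~\ref{lem:basic}(i), produce a sequence in the fiber through $P^\sigma$ accumulating at $P^\sigma$, observe that equal period coordinates force membership in $\ker[N]$, and conclude that $\{P^\sigma\}$ (hence $\{P\}$) fails to be an isolated component of $\XL\cap\ker[N]$. Your write-up is simply more explicit about the Galois transport and the final counting step, which the paper leaves implicit.
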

\begin{proof}
By Lemma \ref{lem:basic}(i) we know that 
 $\theta|_{U}$ is either injective or has fibers
without isolated points.
Say we are in the second case
and let us fix $\sigma \in \Sigma''$.
The fiber of $\theta$ containing any $P^\sigma$ 
also contains an infinite sequence $(P_k)_{k\in\IN}$
with $P_k\in U\ssm \{P^\sigma\}$
converging to $P^\sigma$. Since elliptic logarithms of 
$P_k$ and $P^\sigma$ have the same
coordinates with respect to a period lattice basis we find $P_k\in
\ker[N]$. Therefore, $\{P^\sigma\}$ is not an irreducible component of
$\XL\cap \ker[N]$. The same holds true for $\{P\}$
and this contradicts our hypothesis.
\end{proof}

We now apply  the Theorem of Pila-Wilkie.

\begin{lemma}
\label{lem:algcurve}
 Assume $P$ satisfies the hypothesis of Lemma \ref{lem:remtor}
and suppose $N$, the order of $P$, is sufficiently large, i.e. $N\ge c_{15}$.
There exist $\chi\in\IZ^3\ssm\{0\}$, an irreducible component
$C\subset \XL \cap G_\chi$, and $\sigma\in\Sigma$
with $P^\sigma \in C$  such that 
$\theta(C\cap U)$ contains a connected real semi-algebraic curve.
\end{lemma}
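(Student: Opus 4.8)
The plan is to feed the rational points $\xi^\sigma$ furnished by Lemma \ref{lem:pigeonhole} into the uniform Pila--Wilkie bound of Lemma \ref{lem:pilawilkie}, applied to the fiber $Y=Z_y$ with $y=(\alpha,\beta,\gamma,*,*)$ the tuple from Lemma \ref{lem:pigeonhole}. First I would record that each $P^\sigma$ with $\sigma\in\Sigma''$ has order $N$, so $\xi^\sigma\in\tfrac1N\IZ^6$, whence $\Height{\xi^\sigma}\le N$; moreover $\xi^\sigma\in Z_y$ by the choice of $\Sigma''$, so $\xi^\sigma\in Y\cap\IQ^6$ with height at most $N$. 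By Lemma \ref{lem:remtor}, the hypothesis on $P$ guarantees $\theta|_U$ is injective, so the map $\sigma\mapsto\xi^\sigma$ is injective on $\Sigma''$ and therefore
\begin{equation*}
  \N{Y,N}\ge \#\Sigma'' \ge c_{12}\frac{N^{1/3}}{\log(3N)}.
\end{equation*}

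Next I would compare this lower bound with the Pila--Wilkie upper bound. Lemma \ref{lem:pilawilkie} gives $\N{Y\ssm\alg{Y},N}\le c_3 N^{1/12}$. Since $N^{1/3}/\log(3N)$ grows faster than $N^{1/12}$, there is a constant $c_{15}$ such that for all $N\ge c_{15}$ one has $c_{12}N^{1/3}/\log(3N) > c_3 N^{1/12}$, and hence at least one of the points $\xi^{\sigma}$, say $\sigma\in\Sigma''$, must lie in $\alg{Y}$ — the union of connected positive-dimensional real semi-algebraic subsets of $Y$. Fix such a semi-algebraic arc $\gamma_0\subset Y$ through $\xi^{\sigma}$. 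Pushing $\gamma_0$ forward through the inverse of the (injective, real-bianalytic) chart $\theta$ would only give a real-analytic arc, not an algebraic one, so instead I would argue in the other direction: intersect $\gamma_0$ with the definable set $\Dset=\theta(U)$ and note that, by the definition of $Z_i$ in (\ref{eq:defZi}), every point of $Y=Z_y$ has its first six coordinates in $\Dset$ satisfying the two integral relations with coefficients $(\alpha,\beta,\gamma)$; thus $\gamma_0\subset\theta\bigl(U\cap\theta^{-1}(\text{relations})\bigr)$.

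The key remaining step — and the one I expect to be the main obstacle — is to produce from the semi-algebraic arc $\gamma_0$ an \emph{algebraic} curve $C\subset\XL\cap G_\chi$ (with $\chi=(\alpha,\beta,\gamma)$) meeting $U$ in a set whose $\theta$-image contains a semi-algebraic curve. The point is that $\theta^{-1}(\gamma_0)\subset U\subset X$ is a positive-dimensional real-analytic set all of whose points satisfy the integral relations defining $G_\chi$, because having elliptic-logarithm coordinates $(\xi_1,\dots,\xi_6)$ with $\alpha\xi_1+\beta\xi_3+\gamma\xi_5\in\IZ$ and $\alpha\xi_2+\beta\xi_4+\gamma\xi_6\in\IZ$ is exactly the condition $[\alpha](Q_1)+[\beta](Q_2)+[\gamma](Q_3)=0$ in the fiber. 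Hence $\theta^{-1}(\gamma_0)$ is an infinite subset of the complex algebraic variety $\XL\cap G_\chi$; let $C$ be an irreducible component of $\XL\cap G_\chi$ containing infinitely many points of $\theta^{-1}(\gamma_0)$. Then $C$ is positive-dimensional, $P^{\sigma}\in C$ (since $\xi^\sigma\in\gamma_0$ and $\theta|_U$ is injective), and $\theta(C\cap U)\supset\theta(\theta^{-1}(\gamma_0)\cap C)$, which is an infinite semi-algebraic subset of the arc $\gamma_0$ and therefore contains a connected semi-algebraic curve. The delicate part is checking that $C\cap U$ is large enough — one uses that $\theta|_U$ is injective and that $\gamma_0$, being a connected positive-dimensional semi-algebraic set inside the image of the real-analytic injection $\theta|_U$, has connected analytic preimage accumulating on $C$; a dimension count via the identity theorem on the irreducible component $C$ then closes the argument, and possibly one shrinks $\chi$ to a primitive vector without affecting $G_\chi$.
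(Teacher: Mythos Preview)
Your overall strategy---produce many low-height rational points on a fiber $Y=Z_y$, invoke Pila--Wilkie to land some $\xi^\sigma$ in $\alg{Y}$, then transfer the resulting semi-algebraic arc back to an irreducible component of $X\cap G_\chi$---matches the paper's. The counting step is fine (minor quibble: $\Height{\xi^\sigma}\le N$ needs $|\xi^\sigma|\le 1$; in general you only know $|\xi^\sigma|\le c_2$, so $\Height{\xi^\sigma}\le \max\{1,c_2\}N$, which changes nothing).

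The gap is in your last paragraph. You pick $C$ to be a component of $X\cap G_\chi$ meeting $\theta^{-1}(\gamma_0)$ in infinitely many points, and then assert $P^\sigma\in C$ ``since $\xi^\sigma\in\gamma_0$ and $\theta|_U$ is injective''. That parenthetical only shows $P^\sigma\in\theta^{-1}(\gamma_0)$; it does not place $P^\sigma$ on the particular component $C$ you chose. The arc $\gamma_0$ is covered by the finitely many sets $\theta(C_k\cap U)$, and several of these can be infinite, so there is no reason the infinite one you selected is the one through $\xi^\sigma$. Your proposed rescue via the identity theorem would require $\theta^{-1}(\gamma_0)$ to be a \emph{connected} real-analytic arc, but $\theta|_U$ is merely a definable injection (after the relabeling $U$ need not be compact and $\theta|_U$ need not be a homeomorphism onto its image), and $\gamma_0$ itself is only semi-algebraic, so the preimage has no reason to be a connected smooth analytic curve.

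The paper handles exactly this point with an o-minimal argument rather than an analytic one. It replaces $\gamma_0$ by a continuous semi-algebraic path $\gamma:[0,1]\to Z_y$ with $\gamma(0)=\xi^\sigma$ and $\gamma(1)\neq\gamma(0)$ (curve selection). The pullbacks $I_k=\gamma^{-1}(\theta(C_k\cap U))$ are closed definable subsets of $[0,1]$ covering it, hence each is a finite union of closed intervals; one picks $k$ so that $[0,t]$ is a connected component of $I_k$ with $t$ maximal, and checks that $\gamma|_{[0,t]}$ is non-constant. This simultaneously gives the semi-algebraic curve inside $\theta(C\cap U)$ \emph{and} forces $\xi^\sigma=\gamma(0)\in\theta(C\cap U)$, whence $P^\sigma\in C$ by injectivity. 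Your argument can be repaired along these lines, but the pigeonhole-on-components step as written does not deliver $P^\sigma\in C$.
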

\begin{proof}
 Let $y=(\alpha,\beta,\gamma,\psi,\omega)\in\IZ^5\ssm\{0\}$ and
 $\Sigma''$ be as provided by  Lemma
 \ref{lem:pigeonhole}. 
Say $\sigma\in\Sigma''$. Then $P^\sigma$ is torsion of order $N$ and
  we have $\xi^\sigma \in \frac{1}{N} \IZ^6$.
On the other hand, $|\xi^\sigma|\le c_2$ by Lemma \ref{lem:basic}(iii).
Therefore,
\begin{equation}
\label{eq:xirational}
  \xi^\sigma \in \IQ^6\quad\text{and}\quad \Height{\xi^\sigma} \le c_{13}N
\quad\text{with}\quad c_{13}=\max\{1,c_2\}.
\end{equation}

We set $T = c_{13} N\ge 1$.
By  Lemma \ref{lem:pigeonhole} we have
$\Sigma'' \ge 
c_{14} T^{1/3-1/6} = c_{14} T^{1/6}$.
The number of rational points $\xi^\sigma$ 
is thus at least $c_{14}T^{1/6}$ by Lemma
\ref{lem:remtor}.
However, the upper bound from Lemma \ref{lem:pilawilkie}
gives
\begin{equation*}
  \N{Z_{y}\ssm\alg{(Z_{y})},T}\le c_3 T^{1/12}.
\end{equation*}
We may assume that $T=c_{13}N$ is sufficiently large to the end that
 $c_{14}T^{1/6}> c_{3}T^{1/12}$. Hence
 there exists
 $\sigma\in \Sigma''$ with
$\xi^\sigma \in \alg{(Z_{y})}$.
In other words, there is a connected real semi-algebraic set $R$ in
$Z_{y}$ of positive dimension that contains $\xi^\sigma$.

Any $\xi' = (\xi_1,\dots,\xi_6)\in Z_y$ satisfies
\begin{equation*}
  \alpha \xi_1+\beta \xi_3 +\gamma \xi_5 = \psi \quad\text{and}\quad
  \alpha \xi_2+\beta \xi_4 +\gamma \xi_6 = \omega.
\end{equation*}
By definition, $Z_{y}\subset \Dset  = \theta(U)$. 
So there is $Q\in U$ with 
$\theta(Q)=\xi'$. The linear relations imply
$Q\in G_\chi$ with $\chi=(\alpha,\beta,\gamma)$. 
We conclude $Z_y\subset \theta(   U\cap G_\chi )$.
 Let $\XL\cap G_\chi = C_1\cup\cdots\cup C_r$ be the decomposition into
 irreducible components. So $Z_y\subset\bigcup_k \theta(C_k \cap U)$. 


Since $Z_y$ contains a connected real semi-algebraic set of positive
dimension that passes through $\xi^\sigma$, it is reasonable to expect some
$\theta(C_k\cap U)$ to do the same. Let us now prove this fact.
By Proposition 3.2, page 100 \cite{D:oMin} 
there is a continuous semi-algebraic function $\gamma:[0,1]\rightarrow Z_y$
 with $\gamma(0) = \xi^\sigma$ and
 $\gamma(1)\not=\gamma(0)$. 
Each $\theta(C_k\cap U)$ is definable by Lemma \ref{lem:basic}(ii).
The pre-images $I_k=\gamma^{-1}( \theta(C_k \cap U))\subset \IR$ are
definable
 and their union is $[0,1]$. Recall that 
$U$ is compact.  So each  $I_k$ is   closed
because $\theta(C_k\cap U) \subset \IR^6$ is closed.
By property (v) of an o-minimal structure,
 each $I_k$ is a finite union of closed  intervals.
So there is $k$ and $t\in (0,1]$ such that
 $I_k$ has $[0,t]$ as a connected
component.
We may choose $k$ such that $t$ is maximal.
So $\gamma|_{[0,t]}$ maps to $\theta(C\cap U)$ with $C=C_k$;
in particular, $\xi^\sigma \in \theta(C\cap U)$.
What if $\gamma|_{[0,t]}$ is constant? Then $t<1$ because
$\gamma(1)\not=\gamma(0)$.
By a similar argument as above, the interval $[t,1]$ can be covered by
pre-images which are themselves finite unions of closed intervals. From
this we deduce a contradiction to the maximality of $t$.
So $\gamma|_{[0,t]}$ is non-constant. Its image is 
a connected real semi-algebraic
curve
 which is completely contained in $\theta(C\cap U)$.

This implies the second assertion of the lemma.
It also shows that $\xi^\sigma = \theta(P')$ for some
$P' \in C\cap U$.
But recall that $\theta|_U$ is injective by Lemma \ref{lem:remtor}
and $\theta(P^\sigma)=\xi^\sigma$. Therefore, $P^\sigma=P'\in C$. 
\end{proof}


\begin{lemma}
\label{prop:one}
  Let $C\subset \EL^3$ be an irreducible algebraic curve such that
$\theta(U\cap C)$ contains a connected real semi-algebraic curve.
  \begin{enumerate}
  \item [(i)]
If $\pi|_C:C\rightarrow Y(2)$ is dominant  there exist
independent $\chi',\chi''\in\IZ^3$ with $C\subset G_{\chi'}\cap G_{\chi''}$.
\item[(ii)]
If $\pi|_C:C\rightarrow Y(2)$ is not dominant, then it is constant and
$C$ is the translate of an algebraic subgroup of $(\EL^3)_{\pi(C)}$.
  \end{enumerate}
\end{lemma}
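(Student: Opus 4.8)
The plan is to exploit the fact that $\theta(U\cap C)$ contains a connected real semi-algebraic curve, say $R$, while $\theta$ records elliptic logarithms in terms of a \emph{locally constant, analytically continued} period lattice basis. The curve $C$ is one-dimensional and irreducible, so $\theta|_{U\cap C}$, being the restriction of a real analytic map to a real analytic set of dimension at most two, has image of dimension at most two; since $R\subset\theta(U\cap C)$ is a real semi-algebraic curve, the local real analytic parametrization of $C$ near a smooth point lying over $R$ composed with $\theta$ is a real analytic map onto (a piece of) a semi-algebraic curve. I would first treat part (ii): if $\pi|_C$ is not dominant then $\pi(C)$ is a single point $\lambda_0$ (a constructible subset of the curve $Y(2)$ that is not all of $Y(2)$ is finite, and $C$ irreducible forces it to be one point), so $C\subset (\EL^3)_{\lambda_0}$, an abelian variety of dimension $3$. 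Over a neighborhood inside this fiber the functions $f_P,g_P$ are constant (they are the period basis of the fixed fiber), so $\theta$ restricted to $U\cap(\EL^3)_{\lambda_0}$ is, up to a fixed $\IR$-linear isomorphism $\IC^3\to\IR^6$, just the elliptic logarithm map of the abelian variety $(\EL^3)_{\lambda_0}\cong (\IC/L)^3$. Thus $R$ lifts to a real semi-algebraic arc in the universal cover $\IC^3$ that is the image under the (locally biholomorphic) exponential of an arc in $C$. I would then invoke the classical fact — a Ax--Lindemann-type statement, but in this setting it is elementary because $R$ is a \emph{semi-algebraic curve} and the exponential is a group homomorphism with discrete kernel — that an irreducible algebraic curve in $(\IC/L)^3$ whose elliptic logarithm image contains a real semi-algebraic arc must be a translate of a one-dimensional algebraic subgroup; concretely, the real Zariski closure of the lifted arc in $\IC^3$ must be an affine real subspace (since it is both semi-algebraic and, by analytic continuation along $C$, contained in the $\IR$-span of the tangent direction), forcing the connected component of $C$ through the base point, hence $C$ itself by irreducibility, to be a coset of a subtorus.

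For part (i), assume $\pi|_C$ is dominant. The idea is that the monodromy/analytic continuation of the period basis along $C$ forces strong rationality. Pick a smooth point $Q_0\in C\cap U$ whose $\theta$-image lies on $R$, and a local real analytic parametrization $t\mapsto Q(t)$ of $C$. Then $t\mapsto\theta(Q(t))=(\xi_1(t),\dots,\xi_6(t))\in\IR^6$ is real analytic and its image is locally contained in the semi-algebraic curve $R$; hence there is a nonzero real polynomial vanishing on this image, i.e. $R$ satisfies algebraic equations. I want to upgrade this to \emph{integral linear} relations among the $\xi_k$. The key point: along $C$ the elliptic logarithms $z_{Pk}(Q(t))$ are the elliptic logarithms of $P_k$ varying holomorphically, and $f_P,g_P$ are the analytically continued periods; a torsion point in a fiber has elliptic log with rational coordinates, and here I would instead argue via the structure of $R$ directly. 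Since $R$ is a semi-algebraic curve in $\IR^6$, its Zariski closure (over $\IR$) is an irreducible real algebraic curve $\tilde R$; I claim $\tilde R$ must be contained in an affine subspace cut out by equations $\alpha\xi_1+\beta\xi_3+\gamma\xi_5=\psi$, $\alpha'\xi_2+\beta'\xi_4+\gamma'\xi_6=\omega$ with \emph{integer} coefficients, and in fact with $(\alpha,\beta,\gamma)$ and $(\alpha',\beta',\gamma')$ proportional so that they give two independent relations $\chi',\chi''$. The mechanism is that $\theta(C\cap U)$, as $Q$ moves over all of $U\cap C$ and as we continue along loops in $\pi(C)\subset Y(2)$, is invariant under the monodromy action of $\pi_1(Y(2))$, which acts on $(f_P,g_P)$ through $\mathrm{SL}_2(\IZ)$ and hence on $(\xi_{2k-1},\xi_{2k})$ through the inverse-transpose; an algebraic curve invariant (in an appropriate sense) under an arithmetic group acting this way, together with the holomorphic constraint coming from $z_{Pk}$ being $\IC$-valued, is forced to lie in a rational linear subspace — and a one-dimensional such image is forced to lie in a codimension-two rational subspace, which is exactly two independent integral relations $C\subset G_{\chi'}\cap G_{\chi''}$.

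The main obstacle, and where I expect to spend the real effort, is part (i): making rigorous the passage from ``$\theta(U\cap C)$ contains a semi-algebraic curve'' to ``$C$ lies in the intersection of two $G_\chi$'s with integer $\chi$.'' The delicate points are (a) controlling the multivaluedness of the period basis — one must either work on a suitable covering of $Y(2)$ or argue that the semi-algebraicity of $R$ survives monodromy — and (b) ruling out that the image curve is semi-algebraic ``by accident'' without the linear relations being integral or even linear; here the holomorphicity of the $z_{Pk}$ (they are genuine elliptic logarithms, not arbitrary real combinations of $f_P,g_P$) is what pins the coefficients down, via the fact that a holomorphic function that is an $\IR$-linear combination $\xi_1 f_P+\xi_2 g_P$ of two $\IR$-linearly independent holomorphic functions with real analytic non-constant $\xi_1,\xi_2$ is highly constrained. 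I would handle this by reducing to a local computation: after analytic continuation, $(\xi_1,\dots,\xi_6)$ along $C$ are real analytic, and the condition that $(z_{P1},z_{P2},z_{P3})=(\xi_1 f+\xi_2 g,\dots)$ be holomorphic in the curve parameter while $R$ is algebraic forces the $\xi_k$ to be \emph{locally constant} in the directions transverse to the ``group-like'' direction — equivalently $C$ is swept out by translates satisfying fixed linear relations — and counting dimensions ($\dim C=1$, base $\dim Y(2)=1$) yields exactly the codimension-two condition. Part (ii) I expect to be comparatively routine given part (i)'s machinery, or even directly, since there the base is a point and one is in the classical constant abelian variety situation where Ax--Lindemann for semi-algebraic curves is standard.
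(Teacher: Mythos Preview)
Your treatment of part (ii) is essentially the same as the paper's: once $\pi|_C$ is constant, $C$ sits in a fixed abelian variety and the conclusion is exactly Ax's theorem for abelian varieties (the paper cites Ax's Theorem~3 \cite{Ax}).

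For part (i), however, you are attempting to prove from scratch what the paper obtains as a direct citation: the paper simply invokes Bertrand's Th\'eor\`eme~5 \cite{Bertrand:90} (applied to the three projections $C\to\EL^2$), or alternatively Masser--Zannier's Appendix~A \cite{MZ:legendre}. These are precisely the algebraic independence results asserting that if the period-coordinate functions $\xi_k$ along a section over the base are algebraically dependent, then the underlying sections satisfy integral linear relations. Your proposal is in effect an outline for reproving Bertrand's theorem in this special case, and it has a genuine gap at the point you yourself flag: the monodromy step is not correctly set up. The map $\theta$ is defined only on the local patch $U$ via a \emph{fixed} analytic continuation of the period basis; the set $\theta(C\cap U)$ is not in any obvious sense invariant under $\pi_1(Y(2))$, because traversing a loop replaces the period basis and hence replaces $\theta$ by a different function on a different patch. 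To run a monodromy argument one would need to pass to the universal cover of (an open subset of) $\pi(C)$, globalize the period-coordinate map there, and argue that the resulting real-analytic image, being constrained to a one-dimensional semi-algebraic set and equivariant for the deck group acting through $\mathrm{SL}_2(\IZ)$, must lie in a rational affine subspace. This can be done, but it is the actual content of the transcendence results the paper cites, and your sketch does not supply it; in particular, the passage from ``image semi-algebraic of dimension one'' plus ``the $z_{Pk}$ are holomorphic'' to ``two independent \emph{integral} linear relations'' requires a real argument (ruling out, e.g., that the algebraic relation among the $\xi_k$ is nonlinear, or linear with irrational coefficients). The cleanest fix is simply to cite Bertrand or Masser--Zannier, as the paper does.
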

\begin{proof}
Part (i) follows from Bertrand's Th\'eor\`eme 5 \cite{Bertrand:90} applied to the three
  possible projections of $C$ onto $\EL^2$.  
Alternatively, we can also refer to Masser and Zannier's Appendix A \cite{MZ:legendre}.

Part (ii) is a consequence of  Ax's Theorem 3  \cite{Ax} for a fixed abelian variety.
\end{proof}

\subsection{Proof of Proposition \ref{prop:finiteness}}

We begin by fixing the parameter $B$ used above.

By Theorem 1.3(ii) \cite{Hab:Special} there exists $B\ge 1$,
 depending  on $\XL$, with
$\height{\pi(P)}\le B$ for all torsion points $P\in \remtor{X}\cap X(\IQbar)$.

Let $P \in \remtor{\XL}$ be a torsion point of order $N$ and set $\lambda=\pi(P)$.

The Zariski closed set $\ker[N]$ is equidimensional of dimension 1 by
Lemma 2.5 \cite{Hab:Special}.
So $\{P\}$ is an irreducible component of the intersection $\XL\cap \ker[N]$. 
We can deduce two things. First, using the fact that $\XL$
and $\ker[N]$ are defined over $\IQbar$ we find that $P$ is algebraic, i.e.
$P\in X(\IQbar)$. 
Second, $\height{\lambda}\le B$. So $P$ is as in Subsection \ref{sec:torsion}.

After omitting finitely many $P$ we may suppose that $N$ is sufficiently large; 
for example
 $N\ge c_{15}$, the constant
from Lemma \ref{lem:algcurve}.
We remark that $P$ satisfies the hypothesis of this lemma.

We will prove part (ii) first. So we shall additionally assume that
$(\EL)_\lambda$ has complex multiplication.
The $j$-invariant $J$ of the elliptic curve $(\EL)_\lambda$
is given by (\ref{eq:lambdatoj}). By basic height properties and
$\height{\lambda}\le B$, we find
that $\height{J}$ is  bounded from above independently of $P$.
A result of Poonen \cite{Poonen:MRL01} states that 
the set of $j$-invariants of bounded height coming from elliptic
 curves with complex multiplication is finite. So there are only
 finitely many possibilities for $J$. By (\ref{eq:lambdatoj}) the same
 holds true for
 $\lambda$.

We now prove part (i). 
 We now  assume in addition $P\in \ta{X} \subset\remtor{X}$.

We have already assumed $N$ to be large; this will lead to a
contradiction as follows. 
Let  $\chi\in\IZ^3\ssm\{0\}$ and $C \subset \XL\cap G_{\chi}$
 be as in Lemma \ref{lem:algcurve}.
Then $C\not=\XL$ because otherwise $X\subset G_\chi$  would imply
$\ta{X}=\emptyset$. So
 $\dim C \le 1$.
General intersection theory
implies $\dim C \ge \dim \XL - 1$. Hence $C$ is an algebraic curve defined over $\IQbar$.
Recall that $P^\sigma$  lies on $C$ for some $\sigma\in\Sigma$. 
We split up into cases regarding whether $\pi|_C:C\rightarrow Y(2)$ 
is dominant or not.

First we assume $\pi|_C$ is dominant. By Lemma \ref{prop:one}(i) the algebraic curve
$C$ lies in  $G_{\chi'} \cap G_{\chi''}$
for independent $\chi',\chi''\in\IZ^3$. 
But for an appropriate conjugate $C'$ of $C$ we have
$P\in C'$ and $C'\subset G_{\chi'}\cap G_{\chi''}$.
Therefore, $C'$ is torsion anomalous which contradicts $P\in\ta{X}$.

Now say $\pi|_C$ is not dominant. 
This means that $C$ is contained in a single fiber of
$\EL^3\rightarrow Y(2)$. 
 We know from Lemma
\ref{prop:one}(ii) that $C$ is a
translate of an algebraic subgroup of a fiber of $\EL^3$.
But $C$ contains $P^\sigma$, which is torsion. So $C$ is the translate of an
algebraic subgroup by a torsion point. 
Conjugating,  we find that $P$ is on an algebraic curve $C'$
which is the translate of an
algebraic subgroup of  $(\EL^3)_\lambda$ by a torsion point.

If $(\EL)_\lambda$ does not have  complex multiplication 
then $C'\subset G_{\chi'}\cap  G_{\chi''}$ for independent 
$\chi',\chi''\in\IZ^3$.
This means that $C'$ is a torsion anomalous subvariety of $\XL$ as in
part (i) of the definition.
But $P\in C'$, contradicting our hypothesis $P\in\ta{X}$. 

Finally, suppose $(\EL)_\lambda$ has complex multiplication.
Then is $C'$ a torsion anomalous subvariety as in part (iii) of the definition. As above we
arrive at a contradiction.
\qed

\section{Torsion Anomalous Subvarieties}
\label{sec:notanom}

The results in this section are formulated using the Weierstrass
family of elliptic curves.
Recall that the base $S$ is  the algebraic
surface  given by (\ref{def:S}).
The fiber above $(a,b)\in S$ is an elliptic curve with $j$-invariant
 $j(a,b) = 2^8 3^3 a^3/(4a^3+27b^2)$. We regard $j:S\rightarrow\IA^1$ as
a morphism.

Recall that $\E^3$ is a five-dimensional non-singular irreducible variety.
By abuse of notation, $\pi$ denotes both  structure
morphisms $\E\rightarrow S$ and $\E^3\rightarrow S$.
Both are  proper morphisms.
It is straightforward to check that the $123$-surface $X$ is irreducible.

In Section \ref{sec:legendre} we defined torsion anomalous 
subvarieties of an irreducible algebraic surface in $\EL^3$. 
The analog definition for a surface in
$\E^3$
is somewhat more involved. This is due to the fact that  fibers  of
$\E^3 \rightarrow S$  are isomorphic along algebraic curves in $S$ where $j$ is constant. 
Before coming to the definition we state
an elementary lemma which is used through this section. It
enables us to pass from the Weierstrass to the Tate model of an
elliptic curve.

If $K$ is a field then $K^\times = K\ssm\{0\}$.

\begin{lemma}
\label{lem:isom}
  Let $K$ be a field of characteristic not equal to $2$ or $3$. Say we
  are given two  elliptic curves
  \begin{alignat}1
\nonumber
   E &: y^2 = x^3 + ax + b \quad\text{and}\quad \\
\label{eq:E2}
   E' &: y^2 + xy = x^3+a'x + b' 
  \end{alignat}
with $a,b,a',b'\in K$ that
 are isomorphic over $K$. Then there exists $w\in
 K^\times$  such that 
 \begin{equation*}
   (x,y)\mapsto \left(w^2x - \frac{1}{12},w^3y - \frac 12w^2 x +
   \frac{1}{24}\right)
 \end{equation*}
determines an isomorphism $E\rightarrow E'$ with
\begin{equation}
\label{eq:wab}
  w^4 a = a' - \frac{1}{48}\quad\text{and}\quad
  w^6 b = -\frac{1}{12} a' + b' + \frac{1}{864}.
\end{equation}
Moreover,  any elliptic curve over $K$ is
isomorphic over $K$ to one given as in (\ref{eq:E2}).
\end{lemma}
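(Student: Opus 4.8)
The plan is to reduce the lemma to the standard passage from a Weierstrass equation to short form. Recall from \cite[Chapter III]{Silverman:AEC} that, since $\mathrm{char}\,K\neq 2,3$, every elliptic curve over $K$ is $K$-isomorphic to one of the form $y^2=x^3+Ax+B$, and that two Weierstrass equations are related by an isomorphism of elliptic curves over $K$ if and only if one is obtained from the other by an admissible change of variables $x=u^2x'+r$, $y=u^3y'+su^2x'+t$ with $u\in K^\times$ and $r,s,t\in K$. First I would bring $E'$ into short form explicitly: completing the square via $y=\eta-\tfrac12 x$ turns $y^2+xy=x^3+a'x+b'$ into $\eta^2=x^3+\tfrac14x^2+a'x+b'$, and then completing the cube via $x=\xi-\tfrac1{12}$ eliminates the quadratic term and gives
\begin{equation*}
 \eta^2=\xi^3+\Bigl(a'-\tfrac1{48}\Bigr)\xi+\Bigl(-\tfrac1{12}a'+b'+\tfrac1{864}\Bigr),
\end{equation*}
call this curve $E''$. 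Because $\mathrm{char}\,K\neq2,3$, the composite substitution is an isomorphism $E''\to E'$ defined over $K$, explicitly $(\xi,\eta)\mapsto(\xi-\tfrac1{12},\,\eta-\tfrac12\xi+\tfrac1{24})$.

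Next I would use that $E\cong E'$ over $K$, hence $E\cong E''$ over $K$, with both curves in short form. Comparing the coefficients $a_1,a_2,a_3$ of the two equations, all of which vanish, in the admissible change of variables forces $s=t=0$ (using $\mathrm{char}\,K\neq2$) and $r=0$ (using $\mathrm{char}\,K\neq3$), so the isomorphism $E\to E''$ is diagonal, $(x,y)\mapsto(w^2x,w^3y)$ for a suitable $w\in K^\times$. Substituting the equation of $E$ shows that the image satisfies the equation of $E''$ exactly when $w^4a=a'-\tfrac1{48}$ and $w^6b=-\tfrac1{12}a'+b'+\tfrac1{864}$, that is, exactly when (\ref{eq:wab}) holds. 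Composing this diagonal map $E\to E''$ with the isomorphism $E''\to E'$ recorded above yields $(x,y)\mapsto\bigl(w^2x-\tfrac1{12},\,w^3y-\tfrac12w^2x+\tfrac1{24}\bigr)$, which is precisely the map in the statement; this proves the first part.

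For the closing assertion I would reverse the construction. Given an arbitrary elliptic curve over $K$, write it as $y^2=x^3+Ax+B$ with $A,B\in K$, and set $a'=A+\tfrac1{48}$ and $b'=B+\tfrac1{12}a'-\tfrac1{864}$, both in $K$. By the computation above, the short form of $E':y^2+xy=x^3+a'x+b'$ is exactly $y^2=x^3+Ax+B$, whence $E'$ is $K$-isomorphic to the given curve; in particular $E'$ is nonsingular, being the image of a nonsingular curve under an invertible change of variables.

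I do not expect a genuine obstacle here: the argument is essentially the explicit reduction of $E'$ to short form together with bookkeeping of the composition of the three coordinate changes, where the only delicate points are getting the direction of each substitution and the signs of the $\tfrac12$-terms right. The hypothesis $\mathrm{char}\,K\neq2,3$ enters precisely to make $\tfrac12$ and $\tfrac1{12}$ available in $K$, which is what keeps every map in sight defined over $K$.
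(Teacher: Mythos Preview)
Your proof is correct and is precisely the standard computation the paper has in mind; the paper's own proof consists of the single sentence ``This follows from the basic theory of elliptic curves \cite{Silverman:AEC}.'' Your explicit reduction of $E'$ to short Weierstrass form, followed by the diagonal change of variables between short forms, is exactly how one unpacks that citation, and your coefficient checks are accurate.
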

\begin{proof}
  This follows from the basic theory of elliptic curves \cite{Silverman:AEC}.
\end{proof}

Now we come to the auxiliary construction needed for the definition
of torsion anomalous subvarieties.
Let $A\subset \EL^3$ be an irreducible
closed
subvariety such that $j\circ \pi|_{A}$ is constant with value
$J\in\IC$.
 Then $\pi(A)$ is either a
point or an irreducible algebraic curve. 

We assume the latter for the moment and
set $C=\pi(A)$. We take the coordinates $a$ and $b$ of $S$ as 
elements in the function field $\IC(C)$ of $C$. Then
$4a^3(J-1728)+27b^2 J = 0$.
So $\IC(a,b)$ is a rational
   function field generated by some $t\in \IC(C)$. 
We may assume
\begin{equation}
  \label{eq:ab}
 (a,b) = 
\left\{
\begin{array}{ll}
  (0,t) &:\quad \text{if }J= 0, \\
  (t,0) &:\quad \text{if }J=1728, \\
  (t^2,\zeta t^3) &:\quad \text{if }J\not=0,1728\text{ for some }\zeta\in\IC^\times.
\end{array}
\right.
\end{equation}
The equation $y^2=x^3+ax+b$ defines an elliptic curve $E$
over $\IC(t)$. We fix an algebraic closure $\overline{\IC(t)}$ of $\IC(t)$.
Lemma \ref{lem:isom} provides $w \in \overline{\IC(t)}^\times$ and
an isomorphism between $E$ 
and an elliptic curve $E'$ given as in (\ref{eq:E2}) with $a',b'\in\IC$.
We regard $E'$ as an elliptic curve defined over $\IC$.
The isomorphism may be taken as an algebraic map  on 
$\pi^{-1}(C)$  with image $E'^3$. We let $A'$ denote the Zariski closure of
the image of $A$ in $E'^3$. 

If $\pi(A)$ is a point, then we take
$A'=A$ regarded as a subvariety of the abelian variety $E'^3=\pi^{-1}(\pi(A))$.

Let   $A$ be an arbitrary irreducible closed subvariety of an
algebraic surface in $\E^3$.
Then $A$ 
is called
 torsion anomalous with respect to the given surface 
\begin{enumerate}
\item [(i)] if $\dim A = 1$ and two independent integral relations hold
  on $A$,  
\item[(ii)] or if 
 $\dim A= 2$ and one non-trivial integral relation holds
  on $A$, 
\item[(iii)]
or  if $\dim A \ge 1$ and 
$j\circ\pi|_A$ is constant and equal to the $j$-invariant of an elliptic
curve with complex multiplication such that, in the notation above,
$A'$ is an
irreducible component of an algebraic subgroup of $E'^3$.
\end{enumerate}

Proposition \ref{prop:finiteness} contained a finiteness statement on
the torsion points outside the torsion
anomalous locus of a surface in $\EL^3$.
The   torsion anomalous
subvarieties of the $123$-surface will cause no problems.

\begin{proposition}
\label{prop:noano}
  The  $123$-surface contains no  torsion anomalous subvarieties.
\end{proposition}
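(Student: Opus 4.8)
The plan is to dispose of the three kinds of torsion anomalous subvariety separately, exploiting the very special shape of the $123$-surface $X$: on $X$ the first coordinate of the $k$-th point is the constant $k$, so a point of $X$ lying over $(a,b)\in S$ is determined by the three sign choices in $y_k=\sqrt{k^3+ka+b}$. First I would handle case (ii): suppose $A\subset X$ is a surface (hence all of $X$, since $X$ is irreducible of dimension $2$) on which one nontrivial integral relation $[\alpha](P_1)+[\beta](P_2)+[\gamma](P_3)=0$ holds. Specializing to a generic $(a,b)$ this would force a fixed integral dependence among the three points $(1,*),(2,*),(3,*)$ on every fibre $y^2=x^3+ax+b$; a short argument — e.g. a height or specialization computation, or simply noting that the three points vary independently as $(a,b)$ moves — shows no such universal relation can hold, so case (ii) is vacuous.

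The heart of the matter is case (i): an irreducible curve $C\subset X$ carrying two independent integral relations. Here I would follow the strategy sketched in the introduction. If $j\circ\pi|_C$ is nonconstant, then $\pi(C)$ is a curve in $S$ and the generic fibre of $C\to\pi(C)$ is an elliptic curve $E$ over the function field of $\pi(C)$ carrying a point $(P_1,P_2,P_3)\in E^3$ satisfying two independent integral relations, i.e. $(P_1,P_2,P_3)$ lies in a rank-$\le 1$ subgroup. The plan is to show $E$ has good reduction everywhere: if it had a place $v$ of bad (necessarily multiplicative) reduction, Tate uniformization identifies the local points with $\overline{K_v}^\times/q^{\IZ}$, and the two independent relations translate into a multiplicative relation among the Tate parameters attached to $P_1,P_2,P_3$. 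Since on $X$ these points are built from $\sqrt{1+a+b},\sqrt{8+2a+b},\sqrt{27+3a+b}$, one computes the leading behaviour at $v$ and extracts a multiplicative dependence among explicit elements derived from $1,2,3$ (or among the differences $1-2,1-3,2-3$, etc.); one then checks by elementary valuation/unique-factorization considerations that no such relation exists, a contradiction. Hence the generic fibre has good reduction everywhere, so $j\circ\pi|_C$ is in fact constant — reducing case (i) to the constant-$j$ situation.

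It remains to treat curves (and, for case (iii), higher-dimensional $A$) on which $j\circ\pi|_A$ is constant, together with case (iii) proper. When $j$ is constant on $A$, after the change of model in Lemma~\ref{lem:isom} we land in a fixed abelian variety $E'^3$, and the two independent relations (case (i)) or the subgroup condition (case (iii)) force the first coordinates $1,2,3$ to satisfy strong constraints on a single curve $E'$. Concretely, on the curve $\pi(A)$ one has $(a,b)$ given by \eqref{eq:ab} in terms of a parameter $t$, so the three points $(k,\sqrt{k^3+ka+b})$ become explicit functions of $t$; the algebraic-subgroup or rank condition then yields a polynomial identity in $t$ which one shows is impossible by comparing degrees or specializing $t$. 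The case $\pi(A)$ a point is the genuinely constant elliptic curve $E'$ over $\IC$: here $(1,*),(2,*),(3,*)$ lie on a single curve, and being in a proper algebraic subgroup of $E'^3$ means two independent relations among them — one invokes the fact (related to Lemma~\ref{lem:noanocurve}, to be proved later) that on a fixed elliptic curve the three points with abscissae $1,2,3$ are linearly independent modulo torsion except for finitely many $(a,b)$, none of which give two independent relations.

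The main obstacle I expect is the multiplicative-relation step inside case (i): carefully setting up the Tate parametrization over the function field, computing the valuations of the Tate parameters of $P_1,P_2,P_3$ in terms of the valuations of $1+a+b$, $8+2a+b$, $27+3a+b$ at the bad place, and then proving that the resulting multiplicative relation among algebraic numbers derived from $1,2,3$ is untenable. This requires keeping precise track of which combinations of $1,2,3$ actually appear and ruling out all of them; it is "a simple matter" only after one has organized the bookkeeping correctly.
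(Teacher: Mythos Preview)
Your overall architecture matches the paper's: kill case (i) via Tate uniformization at bad places, reduce to constant $j$, and handle (ii) and (iii) separately. But three steps in your sketch diverge from what the paper actually does, and in two of them your proposed route is not obviously viable.

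\textbf{The Tate step is less direct than you expect.} You anticipate that two independent relations translate into a multiplicative relation among Tate parameters that is ``untenable by elementary valuation considerations.'' In the paper the contradiction is not obtained at a single bad place. What actually happens (Lemmas~\ref{lem:reduction}--\ref{lem:goodreduction}): if some $P_i$ with $i\in\{1,2\}$ reduces to the singular point at a bad place $v$, then the \emph{other two} reduce to explicit nonsingular points with first coordinate $\tfrac{1}{12}(\tfrac{j}{i}-1)$, and their Tate parameters are the real quadratic units $7\pm 4\sqrt 3$, $4\pm\sqrt{15}$, etc.; the rank-$\le 1$ hypothesis then forces $u^M=u'^N$ for units in distinct real quadratic fields, which is impossible. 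This only shows $S_1=S_2=\emptyset$. To get $S=\emptyset$ the paper passes through N\'eron local heights: since $P_1,P_2$ reduce nonsingularly everywhere, $\lambda_v(P_1)=\lambda_v(P_2)$ at every place (Lemma~\ref{lem:localheight}), hence $\hat h(P_1)=\hat h(P_2)$, and combining this with the rank hypothesis produces a nonzero point of height zero with good reduction everywhere, forcing $S=\emptyset$. Your ``valuations of $1+a+b$, $8+2a+b$, $27+3a+b$'' bookkeeping does not by itself produce a contradiction, because those valuations at a bad place are unconstrained.

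\textbf{The constant-$j$ case needs a real argument.} ``Comparing degrees or specializing $t$'' is not what the paper does, and it is not clear it would work. Lemma~\ref{lem:constcurve} instead finds a place of $\IC(t)$ where $1+a+b$ has odd order (using the explicit parametrization \eqref{eq:ab}), so that adjoining $y_1$ is ramified while adjoining $w$ and one of $y_2,y_3$ is not; the Galois involution $y_1\mapsto -y_1$ then forces any endomorphism relation $\alpha(P'_1)=\beta(P'_2)$ to collapse to $2\beta(P'_2)=0$, contradicting $w\notin\IC$. This ramification/Galois trick is the actual content. Also note that $\pi|_X$ has finite fibres, so the sub-case ``$\pi(A)$ a point'' you discuss never occurs for $\dim A\ge 1$.

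\textbf{Case (ii) is handled differently.} Rather than arguing directly that the three sections are independent, the paper reduces (ii) to (i): if $(\alpha,\beta,\gamma)\ne 0$ holds on all of $X$, restrict to the curve in $X$ where $1+a+b=0$ (so $P_1$ has order $2$); then $(2,0,0)$ is a second independent relation on that curve, contradicting the already-established absence of type-(i) anomalous curves. This is shorter than a specialization argument.
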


\subsection{Constant $j$-invariant}

As a warm-up for the proof of Theorem \ref{thm:main}
 we show the following weaker version.  An algebraic curve in the
 $123$-surface on which $j$ is constant
 contains only finitely many torsion points. 
We will use this statement in the proof of Proposition \ref{prop:noano}.

 
 \begin{lemma}
\label{lem:constcurve}
   Let $A\subset X$ be an irreducible closed subvariety such that
   $j\circ\pi |_A$ is constant.
    Then $A$ contains only
   finitely many torsion points and $A$ is not a torsion anomalous
   subvariety as in part (iii) of the definition.
 \end{lemma}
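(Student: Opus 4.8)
The plan is to reduce, via Lemma~\ref{lem:isom}, to a curve inside the cube of a \emph{fixed} elliptic curve, and then to exploit the arithmetic progression $1,2,3$ of the first affine coordinates.

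If $\dim A=0$ both assertions are trivial (part~(iii) only concerns $A$ with $\dim A\ge 1$), so I would assume $\dim A\ge 1$. Since $\pi|_X$ has finite fibres, $\pi|_A$ is quasi-finite and $\dim\pi(A)=\dim A$; moreover $A\ne X$ because $j\circ\pi|_X$ is non-constant whereas $j\circ\pi|_A$ is constant, say equal to $J$. Hence $\dim A=1$ and $C:=\pi(A)$ is an irreducible curve with $j|_C\equiv J$. Using the parametrisation (\ref{eq:ab}) of $C$ together with Lemma~\ref{lem:isom} (exactly as in the construction preceding Proposition~\ref{prop:noano}, composing furthermore with the standard passage to short Weierstrass form) I would produce a fixed elliptic curve $E_0$ over $\IC$ with $j(E_0)=J$, in short Weierstrass form, and the irreducible curve $A'\subset E_0^3$ obtained from $A$ by the fibrewise isomorphisms $\E_{(a,b)}\to E_0$, $(a,b)\in C$; on $A'$ the three points $\widetilde P_1,\widetilde P_2,\widetilde P_3$ have $X$-coordinates $\ell(1),\ell(2),\ell(3)$, where $\ell(u)=mu+c_0$ is affine-linear in $u$ with $X(\widetilde P_1)=\ell(1)$ \emph{non-constant} on $A'$. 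Indeed the computation with (\ref{eq:wab}) shows that in each of the three cases of (\ref{eq:ab}) the relevant slope is a nonzero constant times $t^{-\rho}$, $\rho\in\{1,\tfrac12,\tfrac13\}$, hence non-constant; this is also what forces $A'$ to be a curve rather than a point. A fibrewise isomorphism preserves torsion, so it suffices to prove that $A'$ contains only finitely many torsion points.

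Suppose it does not. By the Manin--Mumford conjecture (Raynaud's theorem) for the abelian variety $E_0^3$, the Zariski closure of the torsion points of the irreducible curve $A'$ is a torsion coset, so $A'=\tau+B$ with $\tau\in(E_0^3)_{\mathrm{tors}}$ and $B\subset E_0^3$ an elliptic curve. Each projection $\mathrm{pr}_i|_{A'}\colon A'\to E_0$ is non-constant (else $\ell(i)$ would be constant), hence finite and \'etale, of degree $d_i=\deg(\mathrm{pr}_i|_B)$; since $X\circ\mathrm{pr}_i|_{A'}=\ell(i)$ differs from $X\circ\mathrm{pr}_1|_{A'}$ only by an affine substitution, it has the same degree $\deg\ell$, so $d_1=d_2=d_3=:d$, and moreover the three divisors $\mathrm{pr}_i|_{A'}^{-1}(O)$ (the double poles of $X\circ\mathrm{pr}_i|_{A'}$) coincide. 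Translating back by $\tau$, the three cosets $(\mathrm{pr}_i|_B)^{-1}(-\tau_i)$ of $\ker(\mathrm{pr}_i|_B)$ in $B$ coincide, whence $\ker(\mathrm{pr}_1|_B)=\ker(\mathrm{pr}_2|_B)=\ker(\mathrm{pr}_3|_B)$; fixing a common point $s_0$ of these cosets one gets $\mathrm{pr}_i|_B=\eta_i\circ\mathrm{pr}_1|_B$ and $\tau_i=\eta_i(\tau_1)$ for automorphisms $\eta_i$ of $E_0$ with $\eta_1=\mathrm{id}$. Since $E_0$ is in short Weierstrass form, $X\circ\eta_i=\lambda_i X$ with $\lambda_i$ a root of unity, so $X(\widetilde P_i)=\lambda_i X(\widetilde P_1)$ on $A'$; comparing with $X(\widetilde P_2)=2X(\widetilde P_1)-c_0$ and using that $X(\widetilde P_1)$ is non-constant forces $\lambda_2=2$, which is absurd because $|\lambda_2|=1$. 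This proves the finiteness statement.

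Finally, if $A$ were torsion anomalous as in part~(iii), then $A'$ would be an irreducible component of an algebraic subgroup of $E_0^3$, hence a translate of an abelian subvariety by a torsion point; such a coset carries a Zariski-dense set of torsion points, which through the fibrewise isomorphism would yield infinitely many torsion points on $A$, contradicting what was just shown. I expect the steps needing most care to be the reduction (verifying that the isomorphism of Lemma~\ref{lem:isom} rescales the first affine coordinate by a non-constant function in each case of (\ref{eq:ab}), and that consequently $A'$ is a curve) and the pole-divisor comparison that pins down the rigid relation $\mathrm{pr}_i|_B=\eta_i\circ\mathrm{pr}_1|_B$; the rest is formal once Manin--Mumford is invoked.
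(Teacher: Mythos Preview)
Your argument is correct and takes a genuinely different route from the paper's. Both proofs reduce to the fixed elliptic curve $E_0$ (or $E'$) via the fibrewise isomorphism and then invoke Raynaud's theorem, but they extract the contradiction from the resulting torsion coset $A'$ in different ways.

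The paper exploits \emph{ramification}: it observes that $1+a+b$ has odd degree in the parameter $t$ of~(\ref{eq:ab}), so there is a place of $\IC(t)$ where $y_1=\sqrt{1+a+b}$ ramifies while (by the linear independence of $(1,1,1),(8,2,1),(27,3,1)$) one of $y_2,y_3$ does not. The resulting Galois involution sends $P'_1\mapsto -P'_1$ but fixes $P'_2$; applied to a relation $\alpha(P'_1)=\beta(P'_2)$ coming from Manin--Mumford it forces $2\beta(P'_2)=0$, hence one of the $P'_i$ is constant torsion and $w\in\IC$, a contradiction.

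You instead exploit the \emph{pole divisors}: since the three $X$-coordinates on $A'$ are the scalar multiples $m,\,2m,\,3m$ of the single non-constant function $m$, the three divisors $\mathrm{pr}_i|_{A'}^{-1}(O)$ coincide; for a torsion coset $A'=\tau+B$ with $\mathrm{pr}_i|_B$ \'etale this forces the kernels $\ker(\mathrm{pr}_i|_B)$ to agree, whence $\widetilde P_i=\eta_i(\widetilde P_1)$ for automorphisms $\eta_i$ of $E_0$. In short Weierstrass form $\eta_i$ scales $X$ by a root of unity, which is incompatible with $X(\widetilde P_2)=2X(\widetilde P_1)$.

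Your approach is cleaner and more geometric: it avoids hunting for a valuation with odd-order vanishing and works uniformly in $J$, needing only that $m$ is non-constant (which you verify case by case). The paper's approach is more arithmetic in flavour and foreshadows the valuation-theoretic reductions used later in Section~\ref{sec:notanom}. One minor comment: since both source and target are in short Weierstrass form the shift $c_0$ in your $\ell(u)=mu+c_0$ is actually zero, so your final comparison is simply $\lambda_2 m=2m$; your more general formulation with constant $c_0$ is of course harmless.
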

 \begin{proof}
We may assume $\dim A \ge 1$.
We remark that $A$ and $\pi(A)=C$ are  algebraic curves since $\pi|_X$ is
dominant and has finite fibers. 

   Let $J\in\IC$ be said $j$-invariant. 
   We let $w,t,A',$ and $E'$ be as in the auxiliary construction before the definition
   of anomalous subvarieties. 
We also consider $a$ and $b$ as elements in $\IC(t)$.

We note that $w\not\in\IC$, indeed, otherwise $a,b$ would be constant
as well
by (\ref{eq:wab}).
Using (\ref{eq:ab})
we find that 
$1+a+b\in \IC(t)$ has odd degree. Therefore, there is a non-trivial
valuation $\ord$ of $\IC(t)$ with $\ord(1+a+b)$ positive and odd.
Using (\ref{eq:ab}) again one finds $\ord(t) = 0$.
Because $a',b'\in\IC$ we can deduce $\ord(w)=0$ from (\ref{eq:wab}).
Therefore, $\IC(w,t)/\IC(t)$ is unramified above $\ord$.
Since  $\{(1,1,1),(8,2,1),(27,3,1)\}$ is linearly dependent
we must have
$\ord(8+2a+b)=0$ or $\ord(27+3a+b)=0$.
 For simplicity say the former
holds; the argument below is readily modified in  the latter case. 
We set $K = \IC(y_2,w,t)$, then $K/\IC(t)$ is unramified above $\ord$.
We 
extend this valuation to $K$ and note that
$K(y_1)/K$ is ramified. Because $y_1^2\in K$ the extension $K(y_1)/K$
 is of degree $2$ and there is an
automorphism $\sigma$ of $K(y_1)/K$ with $\sigma(y_1)=-y_1$.

For $i\in \{1,2,3\}$ we have a point $(i,y_i)\in E(\overline{\IC(t)})$. 
Its image in $E'(\overline{\IC(t)})$ 
under the isomorphism coming from Lemma \ref{lem:isom} is
\begin{equation}
  \label{eq:imy123}
\left(w^2 i-\frac{1}{12},w^3 y_i - \frac 12 w^2 i + \frac{1}{24}\right).
\end{equation}
We may regard $w,y_{1,2,3}$ as rational functions on a ramified
cover of $A$. The three points (\ref{eq:imy123}) determine a rational map
from this cover to $E'^3$. Then  $A'$ is the Zariski closure of its image. 
If $A$ contains infinitely torsion points then so does $A'$.
We use the Manin-Mumford Conjecture for abelian varieties, a result
 first proved by Raynaud \cite{Raynaud:MM}.
It
implies that $A'$ is an irreducible
component of an algebraic subgroup of $E'^3$. 
In particular, there are endomorphisms $\alpha,\beta$ of $E'$, not
both zero,
such that $\alpha(P_1)=\beta(P_2)$ for all $(P_1,P_2,P_3)\in A'$. 
This relation continues to hold generically, i.e. 
$\alpha(P'_1) = \beta(P'_2)$
with $P'_i = (w^2i-1/12,w^3y_i - w^2i/2 + 1/24) \in E(\overline{\IC(t)})$. 
Because $\sigma$ commutes with all endomorphisms of $E'$, which are
defined over $\IC$, we get 
\begin{equation*}
-\beta(P'_2)=-\alpha(P'_1)
=\alpha(w^2-1/12,-w^3y_1-w^2/2+1/24)
= \alpha(P'_1)^{\sigma}
= \beta(P'_2)^{\sigma}=\beta(P'_2).
\end{equation*}
 Therefore, 
$2\beta(P'_2)=0$. 
So one of $P'_1,P'_2\in E'(\overline{\IC(t)})$
 is a torsion point.
But these torsion points are defined over $\IC$ and hence $w\in \IC$. 
This contradicts the fact that $w$ is non-constant.

So $A$ contains only finitely many torsion points. Because an
algebraic subgroup of an abelian variety contains a Zariski dense set
of torsion points we also conclude that $A$ is not torsion anomalous
as in part (iii) of the definition.
 \end{proof}

\subsection{Tate Curves}
In this subsection we collect some basic facts on Tate curves. 
A general reference is Chapter V of Silverman's book
\cite{Silverman:Adv}
or Roquette's book \cite{Roquette}.

Let $K_v$ be a  field, complete with respect to a 
  discrete valuation
$v:K_v\rightarrow \IZ\cup\{+\infty\}$
which we assume to be surjective.
If $q\in K_v^\times$ with $v(q) > 0$ then the  Weierstrass equation
\begin{equation}
\label{eq:tate}
  y^2 + xy = x^3 + a_4(q)x + a_6(q)
\end{equation}
 defines the Tate curve $E_q$ where
 \begin{equation*}
   a_4 = - \sum_{n\ge 1} \frac{n^3 q^n}{1-q^n}\quad\text{and}\quad
a_6 = - \frac{1}{12}\sum_{n\ge 1} \frac{(5n^3+7n^5)q^n}{1-q^n}
 \end{equation*}
converge in $K_v$, cf.
 Theorem V 3.1 \cite{Silverman:Adv}. 
By this theorem and Remark V 3.1.2 {\it ibid.}, cf. Roquette's work
cited above, 
 there exists a surjective homomorphism of groups
\begin{equation*}
  \phi : K_v^\times \rightarrow E_q(K_v)
\end{equation*}
with kernel $q^\IZ$, the infinite cyclic subgroup of $K_v^\times$ generated by  $q$.

We follow  a convenient convention and represent points of
$E_q(K_v)\ssm\{0\}$ using
 affine coordinates. 

Equation (\ref{eq:tate}) has coefficients in
the ring of integers of $K_v$
and is minimal. 
Let $L$ be the residue
field of $K_v$.
The reduction $\widetilde{E_q}$ of $E_q$ is an
irreducible projective curve defined over $L$. We have the reduction
map
$\red : E_q(K_v) \rightarrow \widetilde{E_q}(L)$.
The set of non-singular points of $\widetilde{E_q}(L)$ 
carries a natural abelian group structure.
 We define 
 \begin{equation*}
   E_q(K_v)_0= \{ P\in E_q(K_v);\,\, \red(P)\text{
 is non-singular on }\widetilde{E_q} \}.
 \end{equation*}
This is a subgroup of finite index of $E_q(K_v)$
and $\red|_{E_q(K_v)_0}$ is a homomorphism of groups. 

The Tate uniformization $\phi$ lets us do 
 calculations  explicitly on  Tate curves.

\begin{lemma}
\label{lem:power}
  Let $P\in E_q(K_v)_0\ssm\{0\}$. There is a unique $\tilde u\in K_v$  with
  $v(\tilde u)=0$
and $\phi(\tilde u ) = P$.
Moreover, if $u\in L$ is the reduction of $\tilde u$ then $u\not=0$ and 
  \begin{enumerate}
  \item [(i)] either  $ u=1$ and $\red(P) = 0$,
  \item[(ii)] or $ u \not=1$ and $\red(P) = 
\left( \frac{ u}{(1- u)^2}, *\right)\not=0$. 
  \end{enumerate}
\end{lemma}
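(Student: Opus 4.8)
The plan is to deduce Lemma~\ref{lem:power} directly from the standard description of the Tate uniformization, using the explicit formulas for the coordinates of $\phi(u)$ in terms of $u$ and $q$. First I would recall that for $u\in K_v^\times$ not in $q^{\IZ}$ the point $\phi(u)\in E_q(K_v)\ssm\{0\}$ has affine coordinates given by the classical series
\begin{equation*}
  X(u,q) = \sum_{n\in\IZ}\frac{q^n u}{(1-q^n u)^2} - 2\sum_{n\ge 1}\frac{nq^n}{1-q^n},
  \qquad
  Y(u,q) = \sum_{n\in\IZ}\frac{(q^n u)^2}{(1-q^n u)^3} + \sum_{n\ge 1}\frac{nq^n}{1-q^n},
\end{equation*}
which are exactly the formulas in Theorem~V~3.1 of~\cite{Silverman:Adv}. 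Since $\phi$ has kernel $q^{\IZ}$, the fibre of $\phi$ over $P$ is a single coset $\tilde u q^{\IZ}$, and among its elements exactly one, namely the representative with $v(\tilde u)=0$, has valuation zero provided $v(\tilde u)\equiv 0\pmod{v(q)}$; so the first thing to check is that the valuation-zero representative exists and is unique, i.e. that $v(\tilde u)$ is forced to be a multiple of $v(q)$ precisely when $P\in E_q(K_v)_0$.

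The second step is to identify this integrality condition with membership in $E_q(K_v)_0$. Here I would use that the Néron model / minimal Weierstrass equation~(\ref{eq:tate}) has the property that $\red(\phi(u))$ is non-singular exactly when $v(u)=0$ after reduction $\bmod\ q^{\IZ}$: indeed points with $0 < v(\tilde u) < v(q)$ reduce into the singular point of the nodal cubic $\widetilde{E_q}$, which is the node coming from the $n=0$ term blowing up. Concretely, reducing the series above modulo the maximal ideal kills every term with $n\ne 0$ (since $v(q^n u)>0$ for $n\ge 1$ and $v(q^{-n}u^{-1})>0$ for $n\le -1$ once $v(u)=0$), leaving only the $n=0$ term, so
\begin{equation*}
  \widetilde{X}(\tilde u,q) \equiv \frac{u}{(1-u)^2}, \qquad
  \widetilde{Y}(\tilde u,q)\equiv \frac{u^2}{(1-u)^3}
\end{equation*}
where $u\in L$ is the reduction of $\tilde u$. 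This is finite precisely when $u\ne 1$, giving case~(ii); and when $u=1$ the series for $X,Y$ diverge $p$-adically in the sense that $\tilde u$ actually lies in $1+\mathfrak m$, forcing $\phi(\tilde u)$ to reduce to the identity of $\widetilde{E_q}$, which is case~(i). That $u\ne 0$ is immediate since $v(\tilde u)=0$.

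The main obstacle I expect is pinning down the $u=1$ case cleanly: when $u$ reduces to $1$ one cannot just substitute into the closed form $u/(1-u)^2$, and one must argue instead that $\red(P)=0$. The cleanest route is to use that $\red|_{E_q(K_v)_0}$ is a group homomorphism onto the smooth locus $\widetilde{E_q}^{\mathrm{ns}}(L)\cong L^\times$ (multiplicative reduction), that the induced map $E_q(K_v)_0\to L^\times$ is, up to the identification via $\phi$, just reduction $K_v^\times\supset\mathcal O_{K_v}^\times\to L^\times$, and hence its kernel is exactly $1+\mathfrak m$; so $\red(P)=0$ iff $u=1$, which simultaneously establishes the dichotomy and the non-vanishing $\red(P)\ne 0$ in case~(ii). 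I would cite Theorem~V~3.1 and the accompanying exact sequences in~\cite{Silverman:Adv} (and Roquette~\cite{Roquette}) for the homomorphism $\mathcal O_{K_v}^\times\to L^\times$ being compatible with $\phi$ and $\red$, so that the whole lemma reduces to these bookkeeping facts about the node of $\widetilde{E_q}$ together with the explicit leading term of the coordinate series.
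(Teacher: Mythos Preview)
Your proposal is correct and follows essentially the same route as the paper: both invoke the explicit series for $\phi$ from Silverman's Theorem~V~3.1, isolate the $n=0$ term as the only one of possibly non-positive valuation, deduce $v(\tilde u)=0$ from membership in $E_q(K_v)_0$, and then split on whether $\tilde u$ reduces to~$1$. The only difference is cosmetic: for $u=1$ the paper computes $v(x)=-2v(1-\tilde u)$ and $v(y)=-3v(1-\tilde u)$ and reads off $\red(P)=[0:1:0]$ directly in projective coordinates, whereas you appeal to the compatibility of $\phi$ with the reduction homomorphism $\mathcal{O}_{K_v}^\times\to L^\times$; and to force $v(\tilde u)=0$ the paper quotes Silverman's Lemma~V~4.1.1 ($v(x)\le 0$) while you argue the contrapositive by sending $0<v(\tilde u)<v(q)$ to the node.
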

\begin{proof}
There is precisely one $ \tilde u\in K_v^\times \ssm q^\IZ$ with $0\le v(\tilde
 u)< v(q)$ and
$\phi(\tilde u) = (x,y)= P$. 
By Lemma V 4.1.1 \cite{Silverman:Adv} we have $v(x) \le 0$ because
$P\in E_q(K_v)_0$; we remark that the proof of this lemma involves only
formal properties of the valuation on $K_v$ and hence holds for any
valued field.

The homomorphism  $\phi$ is explicitly given in Theorem V 3.1 \cite{Silverman:Adv} as
\begin{equation*}
 \phi(\tilde u)= \left(\sum_{n\in\IZ} \frac{q^n \tilde u}{(1-q^n
   \tilde u)^2} - 2 \sum_{n\ge
    1}\frac{nq^n}{1-q^n}, 
 \sum_{n\in\IZ}\frac{q^{2n} \tilde u^2}{(1-q^n\tilde u)^3}+ \sum_{n\ge 1} \frac{n q^n}{1-q^n}
\right)
\end{equation*}
because $\tilde u\not\in q^\IZ$.
 All terms in the sum for $x$  have positive valuation
 except possibly
$\frac{q^n \tilde u}{(1-q^n\tilde u)^2}$ for $n=0$. 
A similar remark holds for $y$. We
 can write
\begin{equation}
\label{eq:NP}
  P=\left(\frac{\tilde u}{(1-\tilde
    u)^2}+x',
\frac{\tilde u^{2}}{(1-\tilde u)^3} +
  y'
  \right)
\quad\text{with}\quad v(x') >0\quad\text{and}\quad v(y')>0.
\end{equation}

Since $v(x) \le 0$ we must
 have
$v(\tilde u) \le 2 v(1-\tilde u)$. This inequality implies
 $v(\tilde u)=0$.
The reduction $u$ of $\tilde u$ is thus non-zero in the residue field $L$.


If $v(1-\tilde u) > 0$, then $u=1$ in $L$.
The orders satisfy
\begin{equation*}
  v(x) = -2 v(1-\tilde u)
\quad\text{and}\quad
  v(y) = -3 v(1-\tilde u).
\end{equation*}
In particular, $y\not=0$ 
and in projective coordinates we have $P = [x/y:1:1/y]$
with $v(x/y) = v(1-\tilde u) > 0$ and $v(1/y) =
3v(1-\tilde u) >
0$.
Therefore,  $\red(P) = 0$ and 
we are in case (i).

On the other hand, if $v(1- \tilde u)\le 0$, then
$v(1-\tilde u)=0$ and so
$ u\not=1$. From (\ref{eq:NP}) we see
that $x$ reduces to $u/(1-u)^2$ in the $L$.
We are in case (ii).
\end{proof}

\subsection{Function Fields}
\label{sec:ff}

Let $K$ be the function field of an  irreducible   algebraic curve
defined over $\IC$. 
Let $a,b\in K$ with $4a^3+27b^2\not=0$. Then 
\begin{equation*}
  y^2 = x^3+ax+b
\end{equation*}
determines an elliptic curve $E$ defined over $K$.

After replacing $K$ by a finite extension  we  have points
\begin{alignat*}1 
P_1 &= (1,*)\in E(K), \quad
P_2 = (2,*)\in E(K), \quad\text{and}\quad
P_3 = (3,*)\in E(K).
\end{alignat*}
The  choice of sign of the second coordinate will be
irrelevant.
After again passing to a finite extension of $K$ we may assume that $E$
has either good or  multiplicative
reduction at all places of $K$.
Multiplicative reduction is automatically split because the residue
field  $\IC$ is algebraically closed.

For any place $v$ of $K$ we let $K_v$ denote the completion of $K$
with respect to $v$. We identify $v$ with the corresponding
 surjective valuation
$K_v\rightarrow \IZ\cup\{+\infty\}$.
We define a finite (possibly empty) 
set
\begin{equation*}
  S= \{\text{places of $K$ where $E$ has bad reduction}\}.
\end{equation*}

If $v\in S$, then 
$E$ is isomorphic over $K_v$ to the Tate curve $E_{q_v}$ for some
$q_v\in K_v^\times$ with $v(q_v) > 0$. Let $f_v:E\rightarrow
E_{q_v}$ be an isomorphism as in Lemma \ref{lem:isom}.
If $v\not\in S$, then $E$ is isomorphic over $K_v$ to an elliptic
curve $E_v$ given by the equation $y^2+xy=x^3+a'x+b'$ with $a',b'$
integers in $K_v$ and with good reduction. 
Let $f_v:E\rightarrow E_{v}$ be an isomorphism given by 
said lemma.
To unify notation we sometimes write $E_v = E_{q_v}$ if $v\in S$.

\begin{lemma}
\label{lem:reduction}
Let $v\in S$ and $i,j\in \{1,2,3\}$ with $i\not=j$. If $f_v(P_i)
  \not\in E_{v}(K_v)_0$, then 
  \begin{equation*}
 \red\, f_v(P_{j}) = \left(\frac{1}{12}\left(\frac
        {j}{i}-1\right),*\right)\in \widetilde{E_v}(\IC)
 \quad\text{and}\quad
f_v(P_j) \in E_{v}(K_v)_0.
  \end{equation*}
\end{lemma}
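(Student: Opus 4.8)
The plan is to push the three points through the explicit isomorphism $f_v$ of Lemma \ref{lem:isom} and then read off their reductions on the Tate curve $E_v=E_{q_v}$. Since $E$ and $E_v$ are isomorphic over $K_v$ and the characteristic of $K_v$ is $0$, Lemma \ref{lem:isom} produces $w\in K_v^\times$ for which $f_v$ is the map $(x,y)\mapsto\bigl(w^2x-\tfrac{1}{12},\,w^3y-\tfrac{1}{2}w^2x+\tfrac{1}{24}\bigr)$. In particular, for each $k\in\{1,2,3\}$ the point $f_v(P_k)$ is an affine point with first coordinate $w^2k-\tfrac{1}{12}$; since $f_v$ carries affine points to affine points, none of them is the origin.

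The geometric input I would use is the shape of the reduced curve. The coefficients $a_4(q_v)$ and $a_6(q_v)$ of (\ref{eq:tate}) have positive $v$-valuation, so $\widetilde{E_v}$ is the nodal cubic $y^2+xy=x^3$ over $\IC$, whose only singular point is the origin $(0,0)$ (a direct computation with the partial derivatives). By the very definition of $E_v(K_v)_0$, a point of $E_v(K_v)$ lies outside $E_v(K_v)_0$ exactly when it reduces to $(0,0)$, and a point reducing to the affine point $(0,0)$ must have both affine coordinates $v$-integral, which forces its first coordinate to have positive valuation. Conversely, if the first coordinate $x$ of a point of $E_v(K_v)\setminus\{0\}$ is a $v$-unit, then the Weierstrass equation $y^2+xy=x^3+a_4(q_v)x+a_6(q_v)$ forces its second coordinate to be $v$-integral as well, so the point reduces to an affine point with first coordinate $\overline{x}\neq0$; that point is not the node, hence the original point lies in $E_v(K_v)_0$ and its reduction has first coordinate $\overline{x}$.

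With these two observations the statement follows from a short valuation bookkeeping. From $f_v(P_i)\notin E_v(K_v)_0$ I obtain $v\bigl(w^2i-\tfrac{1}{12}\bigr)>0$; examining the cases $v(w)<0$, $v(w)=0$, $v(w)>0$ shows that necessarily $v(w)=0$ and $\overline{w}^2=\tfrac{1}{12i}$ in $\IC$. Then for $j\neq i$ the first coordinate $w^2j-\tfrac{1}{12}$ of $f_v(P_j)$ reduces to $\tfrac{j}{12i}-\tfrac{1}{12}=\tfrac{1}{12}\bigl(\tfrac{j}{i}-1\bigr)$, which is nonzero because $i$ and $j$ are distinct nonzero elements of $\IC$; hence $v\bigl(w^2j-\tfrac{1}{12}\bigr)=0$. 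Applying the converse observation of the previous paragraph to $f_v(P_j)$ gives $f_v(P_j)\in E_v(K_v)_0$ together with $\red\, f_v(P_j)=\bigl(\tfrac{1}{12}(\tfrac{j}{i}-1),*\bigr)$, which is the claim.

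All computations here are routine. The only point calling for a little care is the identification of the reduced Tate curve as the nodal cubic with node at the origin, and the verification that a point with unit first coordinate really reduces to a non-singular affine point rather than running off to the point at infinity; I do not anticipate a genuine obstacle.
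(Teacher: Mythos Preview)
Your proof is correct and follows essentially the same approach as the paper's: push the points through the explicit isomorphism of Lemma~\ref{lem:isom}, use that the reduced Tate curve is $y^2+xy=x^3$ with unique singular point $(0,0)$, and read off the valuation of $w^2i-\tfrac{1}{12}$ to determine $\overline{w}^2$ and hence the reduction of $f_v(P_j)$. You supply more detail than the paper (the case analysis on $v(w)$ and the explicit check that a unit first coordinate forces integral second coordinate and non-singular reduction), but the argument is the same.
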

\begin{proof}
The isomorphism $f_v$ is given on the affine part of $E$ by 
 \begin{equation*}
    (x,y)\mapsto \left(w^2x - \frac{1}{12},*\right).
 \end{equation*}
for some $w\in K_v^\times$.
The reduction $\widetilde{E_{v}}$ is determined by the Weierstrass equation
$y^2+xy=x^3$ and 
 $(0,0)$ is its  only singular point.
By hypothesis, $f_v(P_{i})$ reduces to
$(0,0)$.
Therefore, $v(w^2 i - 1/12) > 0$.
Since $i\not=0$ we find  $v(w^2 - 1/(12 i)) > 0$. In other
words, $w^2$ reduces to $1/(12 i)$ at $v$. So 
$w^2 j-1/12$ reduces to $(j/i-1)/12$ at $v$ and this is the
first coordinate of $\red\, f_v(P_{j})$. 
Finally, because
$i\not=j$ we have
 $f_v(P_{j})\in E_{v}(K_v)_0$.
\end{proof}

For $i\in\{1,2,3\}$ we define the finite (possibly empty) set
\begin{equation*}
  S_i = \{v\in S;\,\, f_v(P_{i}) \not\in E_{v}(K_v)_0\}.
\end{equation*}

For a finite sequence $P,\ldots,Q\in E(K)$ we set
$\rho(P,\ldots,Q)$ to be the rank of the $\IZ$-submodule of $E(K)$ generated by
$P,\ldots, Q$.

\begin{lemma}
\label{lem:admissible}
Suppose $\rho(P_1,P_2,P_3)\le 1$. Then
$S_1=S_2=\emptyset$.
\end{lemma}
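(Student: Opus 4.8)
The plan is to show that a hypothetical place $v\in S_1$ (resp. $v\in S_2$) leads to a contradiction; the two cases being symmetric, I describe $S_1$ in detail.

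\textbf{Step 1: Relations from the rank bound.} Consider the homomorphism $\psi\colon\IZ^3\to E(K)$ sending $(\alpha,\beta,\gamma)$ to $[\alpha](P_1)+[\beta](P_2)+[\gamma](P_3)$. Its image has rank $\rho(P_1,P_2,P_3)\le 1$, so $\Lambda:=\ker\psi$ has rank at least $2$. Projecting $\Lambda$ to the first coordinate yields a subgroup of $\IZ$ of rank at most $1$, so the kernel of this projection, namely $\Lambda\cap(\{0\}\times\IZ^2)$, has rank at least $1$; it therefore contains a nonzero vector $(0,m_2,m_3)$. In other words there is $(m_2,m_3)\in\IZ^2\ssm\{0\}$ with $[m_2](P_2)+[m_3](P_3)=0$. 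Projecting instead to the second coordinate gives, for later use, $(m_1,m_3')\in\IZ^2\ssm\{0\}$ with $[m_1](P_1)+[m_3'](P_3)=0$.

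\textbf{Step 2: Localizing at $v\in S_1$.} Since $f_v(P_1)\notin E_v(K_v)_0$, Lemma \ref{lem:reduction} (applied with $i=1$, $j=2,3$) gives $f_v(P_2),f_v(P_3)\in E_v(K_v)_0$ together with $\red f_v(P_2)=(\tfrac1{12},*)$ and $\red f_v(P_3)=(\tfrac16,*)$, both affine hence nonzero on $\widetilde{E_v}$. Apply Lemma \ref{lem:power}: there are units $\tilde u_2,\tilde u_3\in K_v^\times$ with $v(\tilde u_k)=0$, $\phi(\tilde u_k)=f_v(P_k)$, and reductions $u_2,u_3\in\IC^\times\ssm\{1\}$ with $u_2/(1-u_2)^2=\tfrac1{12}$ and $u_3/(1-u_3)^2=\tfrac16$. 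Solving, $u_2$ is a root of $U^2-14U+1$ and $u_3$ a root of $U^2-8U+1$, so $u_2\in\{7\pm4\sqrt3\}\subset\IQ(\sqrt3)$ and $u_3\in\{4\pm\sqrt{15}\}\subset\IQ(\sqrt{15})$. Since $f_v$ and $\phi$ are group homomorphisms, the relation of Step 1 gives $\phi(\tilde u_2^{\,m_2}\tilde u_3^{\,m_3})=0$, so $\tilde u_2^{\,m_2}\tilde u_3^{\,m_3}\in q_v^{\IZ}$; applying $v$ and using $v(q_v)>0$ forces $\tilde u_2^{\,m_2}\tilde u_3^{\,m_3}=1$, and reduction modulo $v$ (the residue field is $\IC$) yields $u_2^{\,m_2}u_3^{\,m_3}=1$ in $\IC^\times$.

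\textbf{Step 3: Ruling this out.} Both $u_2$ and $u_2^{-1}$ generate $\IQ(\sqrt3)$ and both $u_3,u_3^{-1}$ generate $\IQ(\sqrt{15})$; moreover $\IQ(\sqrt3)\cap\IQ(\sqrt{15})=\IQ$ because $15/3=5$ is not a square. If $u_2^{\,m_2}u_3^{\,m_3}=1$, then $u_2^{\,m_2}=u_3^{-m_3}$ lies in $\IQ(\sqrt3)\cap\IQ(\sqrt{15})=\IQ$; applying the nontrivial automorphism of $\IQ(\sqrt3)/\IQ$, which sends $u_2$ to $u_2^{-1}$, gives $u_2^{\,2m_2}=1$, and as $u_2$ is a real number different from $\pm1$ we get $m_2=0$; then $u_3^{\,m_3}=1$ forces $m_3=0$ likewise. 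This contradicts $(m_2,m_3)\ne0$, so $S_1=\emptyset$. The case $v\in S_2$ is identical: Lemma \ref{lem:reduction} with $i=2$ gives $\red f_v(P_1)=(-\tfrac1{24},*)$ and $\red f_v(P_3)=(\tfrac1{24},*)$, hence $u_1\in\{-11\pm2\sqrt{30}\}\subset\IQ(\sqrt{30})$ and $u_3\in\{13\pm2\sqrt{42}\}\subset\IQ(\sqrt{42})$; since $42/30=7/5$ is not a square, $\IQ(\sqrt{30})\cap\IQ(\sqrt{42})=\IQ$, and neither $u_1$ nor $u_3$ is $\pm1$, so the relation $u_1^{\,m_1}u_3^{\,m_3'}=1$ coming from Step 1 is impossible. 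Hence $S_2=\emptyset$.

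The only delicate points are, first, getting genuine relations $[m_2](P_2)+[m_3](P_3)=0$ (true equalities, not merely equalities up to torsion) out of the rank hypothesis — handled by the kernel-rank computation in Step 1 — and, second, the elementary but essential fact that the quadratic surds produced have no nontrivial multiplicative relation; this is exactly where the triple $1,2,3$ is used, via the discriminants $192,60,480,672$ lying in pairwise distinct square classes.
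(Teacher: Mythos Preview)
Your proof is correct and follows essentially the same route as the paper's: both arguments apply Lemmas \ref{lem:reduction} and \ref{lem:power} at a putative place $v\in S_i$, derive a multiplicative relation between the explicit quadratic units $u$ and $u'$ from the rank bound, and rule it out via $\IQ(u)\cap\IQ(u')=\IQ$. The only cosmetic differences are that the paper invokes $\rho(P_j,P_k)\le 1$ directly rather than your kernel-rank computation, and concludes by noting $u^M=u'^N\in\{\pm1\}$ forces one of them to be a root of unity, whereas you apply the Galois automorphism explicitly.
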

\begin{proof}
Say ${i}\in \{1,2\}$.
Assuming the existence of
 $v\in S_i$ we will eventually arrive at a contradiction. 

Let us fix $j$ and $k$ with  $\{i,j,k\}=\{1,2,3\}$ and $j<k$.
By Lemma \ref{lem:reduction} we find
\begin{equation*}
\red\, f_v(P_{j}) = \left(\frac{1}{12} \left(\frac
  {j}{i}-1\right),*\right)\not=0,\quad
  \red\, f_v(P_{k}) = \left(\frac{1}{12} \left(\frac
  {k}{i}-1\right),*\right)\not=0,
\end{equation*}
and $f_v(P_{j}),  f_v(P_{k})\in E_{v}(K_v)_0\ssm\{0\}$.

We apply Lemma \ref{lem:power} to 
$f_v(P_{j})$ and $f_v(P_{k})$ and obtain elements
 $\tilde{u}\in K_v$ and $\tilde{u}'\in K_v$, 
respectively. We are in case (ii) of said
lemma, so $u\not=1$ and $u'\not=1$
for the reductions of $\tilde{u}$ and $\tilde{u}'$,
respectively. These reductions satisfy
\begin{equation}
\label{eq:defu}
 \frac{u}{(1-{u})^2} = \frac{1}{12}\left(\frac{ j}{i}-1\right),\quad\text{and}\quad
\frac{u'}{(1-{u'})^2} = \frac{1}{12}\left(\frac{k}{i}-1\right).
\end{equation}

Since $\rho(P_1,P_2,P_3)\le 1$ 
we have $\rho(P_j,P_k)\le 1$. So
there are $M,N\in\IZ$, not both zero, with
 $[M](P_{j}) = [N](P_{k})$. Using the Tate uniformization, this
relation reads
$\phi({\tilde u}^M) = [M](f_v(P_j)) = [N](f_v(P_k)) = \phi(\tilde{u}'^N)$.
So $\tilde{u}^M \tilde{u}'^{-N} \in q^\IZ$. Since $\tilde{u}$ and
$\tilde{u}'$ 
have valuation zero, we
find
$\tilde{u}^M =\tilde{u}'^N$ and in particular, 
$u^M=u'^N$.

The  contradiction now follows for simply evaluating $u$ and $u'$
in the two possible cases $i=1,2$ using (\ref{eq:defu}). 
Rewriting these identities gives
\begin{alignat*}1
  {u}^2 + 2\frac{5 i+ j}{i-j} {u} +
  1=0 \quad\text{and}\quad
  {u'}^2 + 2\frac{5i+k}{i-k} {u'} + 1=0
\end{alignat*}
with solutions
\begin{equation*}
  (u,u') = 
\left\{
\begin{array}{ll}
  (7\pm 4\sqrt{3},4\pm \sqrt{15} ) &: \text{if }(i,j,k)=(1,2,3), \\
  (-11\pm 2\sqrt{30},13\pm 2\sqrt{42} ) &: \text{if }(i,j,k)=(2,1,3). 
\end{array}
\right.
\end{equation*}

In both cases  $u,u'$ are algebraic
units with $\IQ(u)\cap \IQ(u')=\IQ$.
Hence $u^M=u'^N \in \{\pm 1\}$, the algebraic units of $\IQ$.
So one among $u,u'$ is a root of
unity. 
This is impossible for the totally real $u$ and $u'$;
 the lemma follows.
\end{proof}

One can go a bit further and also show $S_3=\emptyset$. But this will
not be necessary.

If $v$ is any place of $K$, then $\lambda_v$ denotes the  N\'eron local height
on any elliptic curve over $K_v$, cf. Chapter VI
\cite{Silverman:Adv}. 
It does not depend on the choice 
 of a  model of the elliptic curve. 
For a place $v$ of bad reduction we will use the Tate curve
 $E_{q_v}=E_v$  to calculate $\lambda_v$. 
There is an explicit formula for
$\lambda_v$ restricted to $E_{v}(K_v)_0\ssm\{0\}$ given by Theorem VI 4.1
\cite{Silverman:Adv}.
We can use it to handle $\lambda_v(P_1)$ and $\lambda_v(P_2)$
because  $S_1=S_2=\emptyset$.

\begin{lemma}
\label{lem:localheight}
Suppose $\rho(P_1,P_2,P_3)\le 1$.
 If
 $v$ is any place of $K$, then 
\begin{equation*}
 \lambda_v(P_{1}) = \lambda_v(P_{2}).
\end{equation*}
\end{lemma}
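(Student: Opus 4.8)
The plan is to prove the identity one place at a time, the crucial input being that, by Lemma~\ref{lem:admissible}, the hypothesis $\rho(P_1,P_2,P_3)\le 1$ forces $S_1=S_2=\emptyset$; thus $f_v(P_i)\in E_v(K_v)_0$ for $i\in\{1,2\}$ and every place $v$ of $K$ (vacuously so when $v\notin S$). Since $\lambda_v$ is independent of the chosen Weierstrass model, I will compute it on $E_v$, on which $f_v$ acts by $(x,y)\mapsto(w_v^2x-\tfrac1{12},*)$ for some $w_v\in K_v^\times$ (Lemma~\ref{lem:isom}), so that $x(f_v(P_i))=w_v^2\,i-\tfrac1{12}$. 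The whole argument then turns on the elementary remark that $v(i)=v(12)=0$ since $i$ and $12$ are nonzero constants, whence: if $v(w_v)\ge 0$ then $v(w_v^2\,i-\tfrac1{12})\ge 0$ for $i=1,2$, while if $v(w_v)<0$ then $v(w_v^2\,i-\tfrac1{12})=2v(w_v)$ for $i=1,2$. In particular $\max\{0,-v(w_v^2\,i-\tfrac1{12})\}$ does not depend on $i\in\{1,2\}$.

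For $v\notin S$ the equation $E_v$ is minimal with good reduction, so the N\'eron local height is $\lambda_v(Q)=\tfrac12\max\{0,-v(x(Q))\}$ (Chapter~VI~\cite{Silverman:Adv}); by the remark this takes the same value at $f_v(P_1)$ and $f_v(P_2)$, giving $\lambda_v(P_1)=\lambda_v(P_2)$. For $v\in S$, $E_v=E_{q_v}$ is the Tate curve and $f_v(P_1),f_v(P_2)\in E_{q_v}(K_v)_0\ssm\{0\}$, so Lemma~\ref{lem:power} supplies $\tilde u_i\in K_v$ with $v(\tilde u_i)=0$ and $\phi(\tilde u_i)=f_v(P_i)$; moreover that lemma gives $v(x(f_v(P_i)))=-2v(1-\tilde u_i)$ with $v(1-\tilde u_i)>0$ in case~(i) (where $\red\,f_v(P_i)=0$), and $v(1-\tilde u_i)=0$ with $v(x(f_v(P_i)))\ge 0$ in case~(ii). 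Comparing with the remark above yields a clean dichotomy: either $v(w_v)\ge 0$, all the $f_v(P_i)$ are in case~(ii), and $v(1-\tilde u_1)=v(1-\tilde u_2)=0$; or $v(w_v)<0$, all the $f_v(P_i)$ are in case~(i), and $v(1-\tilde u_1)=v(1-\tilde u_2)=-v(w_v)$. Finally I plug this into the explicit formula for $\lambda_v$ on $E_{q_v}(K_v)_0\ssm\{0\}$ of Theorem~VI~4.1~\cite{Silverman:Adv}: since $v(q_v^n\tilde u_i^{\pm1})=n\,v(q_v)>0$ for all $n\ge 1$, every ``tail'' term of that formula vanishes and it reduces to $\lambda_v(f_v(P_i))=v(1-\tilde u_i)+c_v$, where $c_v$ (coming from the Bernoulli polynomial term evaluated at $0$) depends only on $q_v$; the dichotomy then gives $\lambda_v(P_1)=\lambda_v(P_2)$ here too.

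Assembling the two cases proves $\lambda_v(P_1)=\lambda_v(P_2)$ at every place $v$. The only point requiring care is the valuation bookkeeping for the scaling parameter $w_v$ against the constants $1,2,\tfrac1{12}$, together with checking that the Tate-curve height formula genuinely collapses to $v(1-\tilde u)$ plus a term independent of the point on $E_{q_v}(K_v)_0$; both become straightforward once one works on the Tate model. I emphasize that the input $S_1=S_2=\emptyset$ is indispensable: at a place in $S_1$ or $S_2$ the point $f_v(P_i)$ would lie outside $E_v(K_v)_0$, the formula above would not apply directly, and one would not expect the two local heights to agree.
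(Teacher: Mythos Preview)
Your proof is correct and rests on the same key observation as the paper's: once $S_1=S_2=\emptyset$ forces $f_v(P_1),f_v(P_2)\in E_v(K_v)_0$, the first coordinates $x_i=w_v^2\,i-\tfrac{1}{12}$ satisfy $\max\{0,-v(x_1)\}=\max\{0,-v(x_2)\}$ by the ultrametric case split on $v(w_v)$.

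The execution differs in one respect. You separate the good and bad reduction cases and, for $v\in S$, pass through the Tate uniformization and the explicit $q$-expansion formula to reduce everything to $v(1-\tilde u_i)$. The paper does not do this: it applies the single formula
\[
\lambda_v(P)=\tfrac12\max\{0,-v(x(P))\}+\tfrac{1}{12}v(\Delta_v)
\]
of Theorem~VI~4.1 in \cite{Silverman:Adv} uniformly at \emph{every} place, since that formula is valid for any $P\in E_v(K_v)_0\ssm\{0\}$ regardless of reduction type. The discriminant term is the same for $P_1$ and $P_2$, so the equality follows directly from your valuation remark, with no need to invoke Lemma~\ref{lem:power} or track $\tilde u_i$. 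Your detour through the Tate parametrization is correct (and your dichotomy $v(1-\tilde u_i)=\max\{0,-v(w_v)\}$ is exactly consistent with the simpler formula, since $v(\Delta_v)=v(q_v)$ on the Tate model), but it is not needed. Also note a small citation slip: the $q$-series height formula you describe is Theorem~VI~4.2 in \cite{Silverman:Adv}, not~4.1.
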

\begin{proof}
Let $v$ be any place of $K$.
Recall that $f_v:E\rightarrow E_v$ is an isomorphism of elliptic curves over
$K$.
By Lemma \ref{lem:admissible} the points  $f_v(P_{1})$ and $f_v(P_{2})$
reduce
 to a non-singular point.

Since $P_{1,2}\not=0$
we may use   Theorem VI 4.1 to evaluate
\begin{alignat*}1
  \lambda_v(P_{1}) &= 
  \lambda_v(f_v(P_{1})) = 
   \frac{1}{2} \max\{0,-v(x_1)\}+\frac{1}{12}v(\Delta_v)\quad\text{and} \\
  \lambda_v(P_{2}) &= 
  \lambda_v(f_v(P_{2})) = 
   \frac{1}{2} \max\{0,-v(x_2)\}+\frac{1}{12}v(\Delta_v)
\end{alignat*}
where $x_1$ and $x_{2}$ are the first coordinates of 
$f_v(P_{1})$ and $f_v(P_{2})$, respectively, and $\Delta_v$ is
the local discriminant of $E_{v}$.
We remark that $x_1$ and $x_2$ depend on $v$.

By Lemma \ref{lem:isom}, the isomorphism $f_v$ is determined by some
$w\in K_v^\times$. So
\begin{equation*}
  x_1  = w^2  - \frac{1}{12}\quad\text{and}\quad
  x_{2}  = 2w^2 - \frac{1}{12}.
\end{equation*}
We split up into two cases.

First, let us suppose  $v(w) \ge 0$.
Then $v(x_1)\ge 0$ and $v(x_2)\ge 0$ by the
 ultrametric triangle inequality. So we 
 have
\begin{equation*}
  \lambda_v(P_{1}) = \lambda_v(P_{2}) = 
\frac {1}{12} v(\Delta_v).
\end{equation*}

Second, we assume $v(w) <0$. 
In this case  the ultrametric triangle inequality  yields
$v(x_{1}) = v(x_2)= v(w^2)$. 
Therefore,
\begin{equation*}
  \lambda_v(P_{1}) = \lambda_v(P_{2}) = -\frac 12 v(w^2) +
  \frac{1}{12} v(\Delta_v).\qedhere
\end{equation*}
\end{proof}

Now we will show that $E$ has good reduction everywhere under the
hypothesis of the previous lemma. This is done by a global argument
using  local data from the last lemma. 

The N\'eron-Tate or canonical height is defined for $P\in E(K)\ssm\{0\}$ as 
$\hat h(P) = \sum_v \lambda_v(P)$ where the sum runs over all places
of $K$; for $P=0$ we set $\hat h(P)=0$.

\begin{lemma}
\label{lem:goodreduction}
Suppose
 $\rho(P_{1},P_{2},P_{3})\le 1$.
  Then $S=\emptyset$.
\end{lemma}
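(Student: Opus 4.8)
The plan is to run a global height argument using the local identity $\lambda_v(P_1)=\lambda_v(P_2)$ from Lemma \ref{lem:localheight} together with the product formula on the function field $K$. First I would recall that since $\rho(P_1,P_2,P_3)\le 1$, there exist integers $M,N$, not both zero, with $[M]P_1=[N]P_2$; summing the local heights over all places $v$ gives $M^2\hat h(P_1)=N^2\hat h(P_2)$. But Lemma \ref{lem:localheight} gives the much stronger pointwise statement $\lambda_v(P_1)=\lambda_v(P_2)$ for every $v$, hence $\hat h(P_1)=\hat h(P_2)$ directly. The real content is to extract information about $S$ from this, so the key is to isolate the contribution of the bad places. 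By Lemma \ref{lem:admissible} we have $S_1=S_2=\emptyset$, so for $v\in S$ the points $f_v(P_1),f_v(P_2)$ lie in $E_v(K_v)_0\setminus\{0\}$ and the explicit formula in the proof of Lemma \ref{lem:localheight} applies; in particular the local heights at bad $v$ are controlled by $v(w)$ and $v(\Delta_v)$.

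Next I would look at the difference $P_1-P_2=(1,*)-(2,*)$, which is again a $K$-rational point, and compute its local height at each $v\in S$ via the Tate curve. Using $\phi$ and Lemma \ref{lem:power}, the points $f_v(P_1)$ and $f_v(P_2)$ correspond to units $\tilde u_1,\tilde u_2\in K_v$ of valuation $0$ with reductions $u_1=1/(12\cdot?)$-type expressions forced by the explicit first coordinates $w^2-1/12$ and $2w^2-1/12$; the point is that at $v\in S$ the reduction of $f_v(P_1-P_2)$ is computable, and since $1\ne 2$ the reductions of $f_v(P_1),f_v(P_2)$ are distinct non-singular points, forcing good behavior of the difference too. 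The upshot should be a formula for $\lambda_v(P_1-P_2)$, $\lambda_v(P_1)$, $\lambda_v(P_2)$ at $v\in S$ in terms of $v(q_v)$ (equivalently $v(\Delta_v)$, since $v(\Delta_v)=v(q_v)$ on a Tate curve) and $v(w)$. The quantity $\sum_{v\in S} v(q_v)$ is, up to a positive constant, the degree of the conductor/discriminant divisor, so it is a non-negative integer that vanishes if and only if $S=\emptyset$.

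Then I would combine: the canonical heights $\hat h(P_1),\hat h(P_2),\hat h(P_1\pm P_2)$ are all finite non-negative reals, and the parallelogram law $\hat h(P_1+P_2)+\hat h(P_1-P_2)=2\hat h(P_1)+2\hat h(P_2)$ together with $\hat h(P_1)=\hat h(P_2)$ constrains things; meanwhile at every good place $v\notin S$ one has $\lambda_v(P_1)=\lambda_v(P_2)=\frac{1}{12}v(\Delta_v)$ when $v(w)\ge 0$ and the analogous formula otherwise, so the good places contribute symmetrically. The surviving discrepancy must come entirely from $S$, and a short arithmetic check — analogous to the untenable-relation computations in Lemmas \ref{lem:admissible} and \ref{lem:constcurve}, evaluating the relevant residues $u_i$ coming from the first coordinates $w^2 i - 1/12$ — should show that any $v\in S$ produces a genuinely asymmetric contribution, contradicting $\hat h(P_1)=\hat h(P_2)$ (or some refinement of it). Hence $S=\emptyset$.

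The main obstacle I anticipate is bookkeeping at $v\in S$ in the remaining case not covered by $S_1=S_2=\emptyset$: although $f_v(P_1),f_v(P_2)$ reduce non-singularly, one must still track whether $v(w)\ge 0$ or $v(w)<0$ and, in the latter case, compute $v(q_v)$ in terms of $v(w)$ and $v(\Delta)$ to see that $v(q_v)>0$ forces an asymmetry; this is where the specific values $1$ and $2$ (and the resulting units like $7\pm 4\sqrt 3$, $4\pm\sqrt{15}$ from the proof of Lemma \ref{lem:admissible}) must enter, since for a generic pair of first coordinates the argument would fail. I would handle this by reducing, exactly as in Lemma \ref{lem:admissible}, to the statement that a certain totally real algebraic unit derived from $1,2$ cannot be a root of unity, which it manifestly is not.
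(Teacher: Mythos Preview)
Your proposal has a genuine gap. You plan to derive a contradiction from an ``asymmetric contribution'' at $v\in S$ that would violate $\hat h(P_1)=\hat h(P_2)$; but Lemma~\ref{lem:localheight} already gives the pointwise identity $\lambda_v(P_1)=\lambda_v(P_2)$ at \emph{every} place, bad ones included, so there is no asymmetry to find. The computations with reductions and the units $7\pm4\sqrt3$, $4\pm\sqrt{15}$ were already spent in Lemma~\ref{lem:admissible} to show $S_1=S_2=\emptyset$; invoking them again here would only reprove that, not force $S=\emptyset$. Your appeal to the parallelogram law is global, and there is no local parallelogram identity for $\lambda_v$ that would let you isolate a bad-place discrepancy.

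What is missing is the following simple global step, which is how the paper proceeds. From $\hat h(P_1)=\hat h(P_2)$ and a relation $[M]P_1=[N]P_2$ with $(M,N)\ne(0,0)$ one gets $M^2\hat h(P_1)=N^2\hat h(P_1)$. If $\hat h(P_1)=0$ set $Q=P_1$; otherwise $M^2=N^2\ne0$, so $[M](P_1\pm P_2)=0$ and one sets $Q=P_1\pm P_2\ne0$. In either case $Q\in E(K)\setminus\{0\}$ has $\hat h(Q)=0$, and since $S_1=S_2=\emptyset$ and $E_v(K_v)_0$ is a subgroup, $f_v(Q)\in E_v(K_v)_0$ for every $v\in S$. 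Now the explicit formula (Theorem~VI~4.1 in Silverman) gives $\lambda_v(Q)=\tfrac12\max\{0,-v(x_Q)\}+\tfrac1{12}v(\Delta_v)\ge0$ at every place; these non-negative terms sum to $\hat h(Q)=0$, hence each vanishes. But at any $v\in S$ one has $v(\Delta_v)>0$, forcing $\lambda_v(Q)>0$, a contradiction. Therefore $S=\emptyset$. The specific values $1,2,3$ play no further role at this stage.
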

\begin{proof}
  First we show that there exists $Q\in E(K)\ssm\{0\}$
with $\hat h(Q)=0$ and $f_v(Q)\in E_{v}(K_v)_0$ for all $v\in S$. 
If $\hat h(P_{1}) = 0$ then we take $Q = P_{1}$ and our claim
follows because $S_1=\emptyset$.
 So say $\hat h(P_{1}) \not= 0$.
By Lemma \ref{lem:localheight} the global heights coincide $\hat
h(P_{1}) = \hat h(P_{2})$.
Since $\rho(P_{1},P_{2})\le 1$ there are
$M,N\in\IZ$ not both zero with $[M](P_{1}) = [N](P_{2})$. 
The N\'eron-Tate height is quadratic, hence $M^2 \hat h(P_{1}) = N^2
\hat h(P_{2}) =  N^2\hat h(P_1)$ and thus $M^2=N^2\not=0$. 
So $[M](P_{1}\pm P_{2})= 0$ and therefore $\hat h(Q) = 0$ 
with $Q=P_{1}\pm P_{2}$. 
Clearly, $Q\not=0$ and
 $f_v(Q)\in
E_{v}(K_v)_0\ssm\{0\}$
 for
all $v\in S$ because $S_1=S_2=\emptyset$.

Now that we have found $Q$ we can easily conclude the proof. Indeed,
the  N\'eron local heights of $Q$  can be evaluated by Theorem VI 4.1.
Just as in the proof of Lemma \ref{lem:localheight},
we use our  model with good reduction $E_v$ if $v\not\in S$ and the Tate curve
$E_{q_v}$ otherwise. 
The N\'eron local heights are non-negative so they  all vanish. But a
N\'eron local height coming from a place of bad reduction contributes by a positive term
through the vanishing order of the local discriminant. 
Therefore, $S=\emptyset$. 
\end{proof}

\begin{lemma}
\label{lem:noanocurve}
  We have $\rho(P_1,P_2,P_3) \ge 2$.
\end{lemma}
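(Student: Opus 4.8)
The plan is to argue by contradiction. Suppose $\rho(P_1,P_2,P_3)\le 1$. We use throughout that $a$ and $b$ are not both constant, as holds in the intended application, where $K=\IC(\pi(C))$ for an irreducible curve $\pi(C)\subset S$ and $a,b$ are its coordinate functions.

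\emph{Step 1: reduction to the isotrivial case.} By Lemmas \ref{lem:admissible}, \ref{lem:localheight} and \ref{lem:goodreduction}, the assumption $\rho(P_1,P_2,P_3)\le 1$ forces $S=\emptyset$; that is, $E$ has good reduction at every place of $K$. After the finite extensions performed at the start of Section \ref{sec:ff} the reduction is everywhere good or multiplicative, so the places of bad reduction are precisely the poles of the $j$-invariant $j\in K$. Hence $j$ is a pole-free rational function on the smooth projective model of the curve with function field $K$, and therefore $j=J\in\IC$ is constant: $E$ is isotrivial.

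\emph{Step 2: the valuation and Galois set-up.} Since $J$ is constant and $a,b$ are not both constant, $\IC(a,b)=\IC(t)$ is a rational function field and $(a,b)$ has one of the three shapes in (\ref{eq:ab}); in every case $1+a+b\in\IC(t)$ is a polynomial in $t$ of odd degree taking the value $1$ at $t=0$. Fix a place $\ord$ of $\IC(t)$ with $\ord(1+a+b)$ positive and odd; then $\ord(t)=0$. Let $E'$ (an elliptic curve over $\IC$ of the form (\ref{eq:E2})) and $w\in\overline{\IC(t)}^{\times}$ be as in the auxiliary construction preceding the definition of torsion anomalous subvarieties, so that $E$ becomes isomorphic to $E'$ over $\overline{\IC(t)}$ by $(x,y)\mapsto(w^2x-\frac{1}{12},\dots)$ with (\ref{eq:wab}) holding. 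As in the proof of Lemma \ref{lem:constcurve}, (\ref{eq:wab}) together with $\ord(t)=0$ gives $\ord(w)=0$, and (\ref{eq:wab}) also shows $w\notin\IC$ (otherwise $a,b\in\IC$). The three vectors $(i^3,i,1)$, $i\in\{1,2,3\}$, span $\IQ^3$ (their determinant is $-12$), so the constant $1$ is a $\IQ$-linear combination of $1+a+b$, $8+2a+b$, $27+3a+b$; since $\ord(1)=0$, these three cannot all have positive $\ord$, and as $\ord(1+a+b)>0$ we get $\ord(8+2a+b)=0$ or $\ord(27+3a+b)=0$ --- say the former, the other case being symmetric under interchanging $P_2$ and $P_3$. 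Then $\ord$ is unramified in $L:=\IC(t,w,y_2)$, where $y_2^2=8+2a+b$ (because $\ord(w)=0$ and $\ord(8+2a+b)=0$), whereas $L(y_1)/L$ with $y_1^2=1+a+b$ is ramified over $\ord$ since $\ord(1+a+b)$ is odd. Hence $[L(y_1):L]=2$ and there is a nontrivial automorphism $\sigma$ of $L(y_1)/L$ with $\sigma(y_1)=-y_1$; thus $\sigma$ fixes $P_2=(2,y_2)$ and sends $P_1=(1,y_1)$ to $-P_1$.

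\emph{Step 3: the contradiction.} From $\rho(P_1,P_2,P_3)\le 1$ we get $\rho(P_1,P_2)\le 1$, so there is $(\alpha,\beta)\in\IZ^2\ssm\{0\}$ with $[\alpha](P_1)+[\beta](P_2)$ a torsion point; this relation, being an identity of $\overline{\IC(t)}$-points, holds in $E(L(y_1))$. Because $E$ is isotrivial, its torsion points over $\overline{\IC(t)}$ are defined over $\IC$ and so are fixed by $\sigma$; applying $\sigma$ to the relation and comparing with the original yields $[2\alpha](P_1)=0$ and $[2\beta](P_2)$ torsion. Hence one of $P_1,P_2$ is a torsion point of $E(\overline{\IC(t)})$. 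But under $E\cong E'$ the point $P_i$ has first coordinate $w^2 i-\frac{1}{12}$, and a torsion point of the constant curve $E'$ is defined over $\IC$; so $w^2 i-\frac{1}{12}\in\IC$ for some $i\in\{1,2\}$, forcing $w^2\in\IC$ and then $w\in\IC$, contradicting $w\notin\IC$. Therefore $\rho(P_1,P_2,P_3)\ge 2$.

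The main obstacle, and the only genuinely new ingredient beyond the proof of Lemma \ref{lem:constcurve}, is turning the rank bound $\rho\le 1$ into a usable two-term relation $[\alpha](P_1)+[\beta](P_2)=0$ (up to torsion) and then using $\sigma$ to exploit the asymmetry between the first coordinates $1$ and $2$ of $P_1$ and $P_2$; this is where the specific integers enter. The reduction to the isotrivial case rests entirely on Lemma \ref{lem:goodreduction}, and the bookkeeping surrounding the choice of $\ord$ and the ramification of $L(y_1)/L$ is routine.
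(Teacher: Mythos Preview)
Your proof is correct, but it takes a different (and in fact more elementary) route than the paper's.

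Both arguments begin identically: invoke Lemma~\ref{lem:goodreduction} to force $S=\emptyset$, hence $j$ is constant and $E$ is isotrivial. From here the paths diverge. The paper reformulates the situation as an irreducible curve $C$ in the $123$-surface with constant $j$-invariant on which two relations $[N](P_i)=[M](P_j)$ and $[N'](P_i)=[M](P_k)$ hold, observes that the non-constant function $w^2 i - \tfrac{1}{12}$ hits the first coordinate of torsion points of the constant model $E'$ infinitely often, and thereby produces infinitely many torsion points on $C$ --- contradicting Lemma~\ref{lem:constcurve} (whose proof in turn appeals to Raynaud's theorem on Manin--Mumford). You instead replay the valuation/Galois mechanism from the \emph{proof} of Lemma~\ref{lem:constcurve} directly: the rank hypothesis already supplies a two-term relation $[\alpha](P_1)+[\beta](P_2)$ torsion, and the involution $\sigma$ negating $P_1$ and fixing $P_2$ forces one of $P_1,P_2$ to be torsion, contradicting $w\notin\IC$. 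This bypasses the detour through ``infinitely many torsion points'' and hence avoids Manin--Mumford altogether, at the modest cost of repeating the $\ord$-and-$\sigma$ bookkeeping rather than citing Lemma~\ref{lem:constcurve} as a black box.

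One small imprecision: you write that the torsion points of $E$ over $\overline{\IC(t)}$ are ``defined over $\IC$''. Strictly they are defined over $\IC(w)$ (the isomorphism $E\cong E'$ lives over $\IC(w)$, not $\IC$); but since $w\in L$ and $\sigma$ fixes $L$, your conclusion that $\sigma$ fixes the torsion point $T$ is unaffected.
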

\begin{proof}
 We assume $\rho(P_1,P_2,P_3)\le 1$ and deduce a contradiction.

For a certain reordering $(i,j,k)$ of $(1,2,3)$ 
and fixed $M,N,N'\in\IZ$ with $M\not=0$ we have
\begin{equation}
\label{eq:rels}
[N](P_i) = [M](P_j)\quad\text{and}\quad [N'](P_i) = [M](P_k).   
\end{equation}

By the previous lemma we have $S=\emptyset$.
So the $j$-invariant of $E$ is a constant
$2^8 3^3 a^3 /(4a^3+27b^2) \in \IC$.

We may reformulate our situation as follows. There exists an
irreducible algebraic curve $C$ in the $123$-surface for which $j|_{C}$ is constant
and where relations as in (\ref{eq:rels}) hold.

We will  prove below that there are infinitely many points on $C$ where
the $i$-th coordinate is torsion. The relations (\ref{eq:rels}) and Lemma
\ref{lem:constcurve}  lead to a contradiction.

By Lemma \ref{lem:isom} the elliptic curve $E$,
 having constant $j$-invariant, is isomorphic to an elliptic
curve $E'$ given as in (\ref{eq:E2}) with $a',b'\in\IC$. 
This lemma provides an isomorphism
$E\rightarrow E'$
determined by some
 $w\not=0$ in an algebraic closure of $K$. 
We remark $w\not\in \IC$ because $\IC(a,b)$ is not algebraic over $\IC$.
We may regard $w$ as a non-constant algebraic function on $C$.
The image of $P_i$
under this isomorphism is $(w^2i-1/12,*)$. 
We may regard it as an algebraic curve in $E'$.
Now $w^2i-1/12$ attains, up-to finitely many exceptions, 
 any complex value.
In particular, it attains the first coordinate of
a  torsion point of $E'$ infinitely often.
This gives the infinitely many points on $C$ with the desired property.
\end{proof}



\subsection{There are no  Torsion Anomalous
  Subvarieties}
We now prove Proposition \ref{prop:noano}.
First we show that $X$ does not contain any  torsion
anomalous subvarieties as in part (i) of the definition. 
Let $C\subset X$ be an irreducible algebraic curve.
The coordinate functions $a,b:S\rightarrow \IA^1$ induce  rational
functions on $C$.
They determine an elliptic curve $E$ defined over $\IC(a,b)$
given in Weierstrass  $y^2=x^3+ax+b$.
We consider
three points
$P_{1,2,3}$ as in the previous section. Then $\rho(P_1,P_2,P_3)\ge 2$
by Lemma \ref{lem:noanocurve}.
This means that two independent relations cannot simultaneously hold
on $C$. In other words,
 $C$ cannot be  torsion
anomalous. 

Now we show that $X$ cannot contain a torsion anomalous
surface as in part (ii) of the definition. Assuming the contrary, $X$ is a  torsion
anomalous subvariety of itself. 
So there is $(\alpha,\beta,\gamma)\in\IZ^3\ssm\{0\}$ with
\begin{equation*}
  [\alpha] (1,\sqrt{1+a+b}) + [\beta](2,\sqrt{8+2a+b}) +
  [\gamma](3,\sqrt{27+3a+b}) = 0
\end{equation*}
for all $(a,b)\in S$. We suppose first $\beta\not=0$ or
$\gamma\not=0$.
There is an irreducible algebraic curve $C\subset X$ on which $1+a+b=0$
holds identically. So the first coordinate in $\E$ of a point in $C$
has order $2$. In addition to $(\alpha,\beta,\gamma)$, a second and  
independent relation $(2,0,0)$ holds on $C$. Therefore, $C$ is
 torsion anomalous as in part (i) of the definition.
This contradicts the already proven  part of the proposition.
If $\beta=\gamma=0$ we also conclude a contradiction by a similar argument
using a curve on which $8+2a+b=0$ holds.

Finally, by Lemma \ref{lem:constcurve} the surface
 $X$
cannot contain any torsion anomalous subvarieties as in part (iii) of
the definition. \qed

\section{Proof of the Main Result}

Recall that $\mathcal{E}_L$ is the Legendre family of elliptic
curves
over $Y(2)=\IP^1\ssm\{0,1,\infty\}$ and that
 $\mathcal{E}$ is the Weierstrass family of
elliptic curves over $S=\{(a,b);\,\, 4a^3+27b^2\not=0\}$.

Let $X_L$ be an irreducible closed algebraic surface in $\EL^3$. In Section \ref{sec:legendre}
we introduced the notion of a torsion anomalous subvariety of $X_L$.
We call an irreducible closed subvariety of $X_L$ a strongly
torsion anomalous subvariety of $X_L$ if it 
satisfies (i) or (ii)
in the definition of a torsion anomalous subvariety.
 We write $\sta{X_L}$
for $X_L\ssm \bigcup_A A$, here $A$ runs
over all strongly torsion anomalous subvarieties of $X_L$.

Let $X \subset\mathcal{E}^3$ 
be the $123$-surface.
We recall that is irreducible.
  We start off by using it
  to construct an algebraic surface $X_L$ in the Legendre family $\EL^3$.
We first introduce a covering $S'$ of $S$ by setting
\begin{alignat*}1
  S' = \{(a,b,e_1,e_2,e_3,t,r) \in S \times \IA^5;\,\, &e_i^3+a e_i + b
  = 0\text{ for }1\le i\le 3, \\
 &(e_2-e_1)^2(e_3-e_2)^2(e_1-e_3)^2t = 1, \\
 &e_2-e_1 = r^2\}.
\end{alignat*}
Note that $(e_2-e_1)^2(e_3-e_2)^2(e_1-e_3)^2 = -(4a^3+27b^2)$ is
invertible in the coordinate ring of $S$. So $t$ as well as
$e_{1,2,3}$ and $r$  are integral over the
coordinate ring of $S$.
In geometric terms this means that  the natural projection morphism
$S'\rightarrow S$ is finite. 
It is also surjective.
The irreducible components of  $S'$ have dimension $2$.
We obtain a new abelian scheme $\mathcal{E}'^3\rightarrow S'$ by taking the
fibered product of $\mathcal{E}^3\rightarrow S$ with $S'\rightarrow
S$. 
Let $f:\mathcal{E}'^3 \rightarrow \mathcal{E}^3$ be the induced
morphism. It is finite and surjective since these properties are
preserved under base change.
Since $f$ is a closed surjective morphism and $X$ is irreducible, 
the pre-image $f^{-1}(X)$ contains an irreducible component $X'$
with $f(X')=X$. We must have $\dim X' =2$
by standard results in dimension theory,
cf. Exercise II 3.22 \cite{Hartshorne}.

We define a morphism $S'\rightarrow Y(2)$ by 
$(a,b,e_1,e_2,e_3,t,r)\mapsto \frac{e_3-e_1}{e_2-e_1}$.
Then
\begin{equation*}
 (x,y,a,b,e_1,e_2,e_3,r)\mapsto
  \left(\frac{x-e_1}{e_2-e_1},\frac{y}{r^3},\frac{e_3-e_1}{
e_2-e_1}\right)  
\end{equation*}
induces a morphism 
 $g:\mathcal{E}'^3\rightarrow \mathcal{E}_L^3$.
Restricted to a fiber of $\E'^3\rightarrow S'$,
it gives an isomorphism between  Weierstass and Legendre models
of an elliptic curve.
Moreover, it fits into the
 commutative diagram
\begin{equation*}
  \begin{CD}
  \mathcal{E}'^3 @ > g >>  \mathcal{E}_L^3 \\
     @ VVV       @  VVV \\
    S' @>>>  Y(2).
  \end{CD}
\end{equation*}

A straight-forward  verification shows that $g|_{X'}:X'\rightarrow
\mathcal{E}_L^3$ has finite fibers. 
The image $g(X')$ is  constructable  in $\EL^3$ by Chevalley's Theorem.
Let $X_L$ be the Zariski closure of $g(X')$ in $\EL^3$.
Then $X_L$ is irreducible and from
 dimension theory we conclude $\dim X_L = 2$.

Next, let us show
that $X_L$ does not contain any   torsion anomalous subvariety
as in part (ii) of the definition. 
Indeed, otherwise a non-trivial integral relation would hold
identically on
$X_L$. Any such integral relation would hold on $X'$
and also on $X$
 because $g$ is fiberwise 
the cube of an isomorphism of elliptic curves. But no
non-trivial integral relation holds on $X$ by 
Proposition \ref{prop:noano}.

Next, we claim that $X_L$ contains only finitely many    torsion anomalous
subvarieties as in part  (i)  of the definition. 
We also claim that
 each such subvariety intersects $g(X')$ in only finitely many points.
 
Let $C\subset X_L$ be such a  torsion anomalous subvariety.
Then $C$ is an algebraic curve and
there are two possibilities.

Say first that $g|_{X'}^{-1}(C)$ has positive dimension. Then it contains
an irreducible algebraic curve $C'$. Two independent integral relations hold on
$C$. These must continue to hold on $C'$.
Finally, these relations also hold on 
$f(C)\subset X$. Latter must have dimension $1$ because $f$ is a
finite morphism. 
We have found a  torsion anomalous subvariety in $X$ and so a 
 contradiction to Proposition \ref{prop:noano}.

Now say $g|_{X'}^{-1}(C)$ has dimension $0$.
This implies that $C \cap g(X')$ is finite. Because $C$ is irreducible it
follows that $C$ is in 
the Zariski closure of $X_L\ssm g(X')$ in $X_L$.
This closure is a finite union of points and
irreducible algebraic curves. Therefore, it contains $C$ as an irreducible
component. This leaves only finitely many possibilities
for $C$ and our claim  above holds.

 We have proved that
 $g(X') \cap (X_L \ssm \sta{X_L})$ is finite.

Say $P_1,P_2,\ldots$ is a sequence of distinct torsion
points on $X$. We will deduce a contradiction. Since $f|_{X'}:X'\rightarrow X$ is surjective
we find a pre-image, which must be torsion, of each $P_i$  in $X'$.
Because $g|_{X'}$ has finite fibers, $g(X')$
contains infinitely many torsion points $Q_1,Q_2,\dots$. 

By the discussion above, only finitely many of the $Q_1,Q_2,\dots$
can lie on $X_L \ssm \sta{X_L}$.
We remove these from our sequence and suppose $Q_i \in \sta{X_L}$.
By Proposition \ref{prop:finiteness}(i), only
finitely many of the remaining $Q_i$ can lie on $\ta{X_L}$. We remove these as
well. So $Q_i \in \sta{X_L}\ssm \ta{X_L}$.

All $Q_i$ are on an torsion anomalous subvariety of $X_L$ as
 in part (iii) of the definition of torsion
anomalous. 
In particular, each $Q_i$ is  in some  fiber with complex
multiplication. 
We use Proposition \ref{prop:finiteness}(ii).
After 
passing to an infinite subsequence, the $Q_i$
are all in the same fiber of $\EL^3\rightarrow Y(2)$. 
Let $J\in\IC$ be the $j$-invariant of a factor of this fiber.
Each of the corresponding $P_i$ lies in the cube of an elliptic curve
with  $j$-invariant $J$. 
By passing to a  infinite subsequence a last time we find
 infinitely many torsion points  on an
irreducible algebraic curve in $X$ on which the $j$-invariant is
constant. This  contradicts Lemma \ref{lem:constcurve} and completes the
proof of Theorem \ref{thm:main}.
\qed

\bibliographystyle{amsplain}
\bibliography{literature}

\vfill
\address{
\noindent
Philipp Habegger,
Johann Wolfgang Goethe-Universit\"at,
Robert-Mayer-Str. 6-8,
60325 Frankfurt am Main,
Germany,
{\tt habegger@math.uni-frankfurt.de}
}
\bigskip
\hrule
\medskip


\end{document}